\newcommand{\LC}{\left(}
\newcommand{\RC}{\right)}
\theoremstyle{plain}
\newtheorem{thm}{Theorem}[section]
\newtheorem{prop}{Proposition}[section]
\newtheorem{lem}[prop]{Lemma}
\newtheorem{cor}[prop]{Corollary}
\newtheorem{defi}[prop]{Definition}
\newtheorem{rmk}[prop]{Remark}
\numberwithin{equation}{section}
\newcommand {\R} {\mathbb{R}} 
 \newcommand {\N} {\mathbb{N}}
\newcommand {\p} {\partial}
\newcommand{\eps}{\epsilon}
\newcommand{\vareps}{\varepsilon}
\newcommand {\supp} {\text{supp}}
\newcommand{\wt}{\widetilde}
\newcommand{\Q}{\mathcal{Q}}
\newcommand{\norm}[1]{\lVert #1 \rVert}         % Formatting for the norm
\DeclareMathOperator{\F} {\mathcal{F}}
\title[]{Monotonicity-based inversion of fractional semilinear elliptic equations with power type nonlinearities}
\author[Yi-Hsuan Lin]{Yi-Hsuan Lin}
\address{Department of Applied Mathematics, National Chiao Tung University, Taiwan}
\email{yihsuanlin3@gmail.com}
\begin{document}
	
	\maketitle
	
	\begin{abstract}
		We investigate the monotonicity method for fractional semilinear elliptic equations with power type nonlinearities. We prove that if-and-only-if monotonicity relations between coefficients and the derivative of the Dirichlet-to-Neumann map hold. Based on the strong monotonicity relations, we study a constructive global uniqueness for coefficients and inclusion detection for the fractional Calder\'on type inverse problem. Meanwhile, we can also derive the Lipschitz stability with finitely many measurements. The results hold for any $n\geq 1$.
		
		\medskip
		
		\noindent{\bf Keywords.} Calder\'on problem, fractional Laplacian, nonlocal, $L^p$-estimates, semilinear elliptic equations, monotonicity, Runge approximation, localized potentials, Lipschitz stability.
		
		%\noindent{\bf Mathematics Subject Classification (2010)}: 
		
	\end{abstract}

	\tableofcontents

	\section{Introduction}\label{Sec 1}
	In this work, we extend the monotonicity method \cite{harrach2017nonlocal-monotonicity,harrach2020monotonicity} to the case of fractional semilinear elliptic equations with power type nonlinearities. The mathematical formulation is given as follows.
	Let $\Omega \subset \R^n $ be a bounded domain with $C^{1,1}$-boundary $\p \Omega$, for $n\geq 1$. For $0<s<1$, and any $m\geq 2$, $m\in \N$. Let $q\in L^\infty(\Omega)$ be a potential, then we consider the Dirichlet problem for the fractional semilinear elliptic equation with power type nonlinearities 
	\begin{align}\label{Main equation}
		\begin{cases}
		(-\Delta)^s u + qu^m =0 & \text{ in }\Omega ,\\
		u=f &\text{ in } \Omega_e :=\R^n \setminus \overline{\Omega}.
		\end{cases}
	\end{align}
	The well-posedness of \eqref{Main equation} holds for any sufficiently small exterior data $f$ in an appropriate function space, which will be demonstrated in Section \ref{Sec 2}.
	Here the fractional Laplacian $(-\Delta)^s $ is defined via the integral representation
	\begin{align}\label{fractional Laplacian}
	(-\Delta)^{s}u=c_{n,s}\mathrm{P.V.}\int_{\mathbb{R}^{n}}\dfrac{u(x)-u(y)}{|x-y|^{n+2s}}dy,
	\end{align}
	for $u\in H^s(\mathbb R^n)$, where P.V. denotes the principal value and  
	\begin{equation}
	c_{n,s}=\frac{\Gamma(\frac{n}{2}+s)}{|\Gamma(-s)|}\frac{4^{s}}{\pi^{n/2}}\label{c(n,s) constant}
	\end{equation}
	is a constant that was explicitly calculated in \cite{di2012hitchhiks}. Here $H^s(\R^n)$ is the fractional Sobolev space, which will be introduced in Section \ref{Sec 2}. On one hand, we want to emphasize that the regularity condition of $\p \Omega\in C^{1,1}$ is needed due to the well-posedness and suitable $L^p$-estimates for the fractional Laplacian. On the other hand, it is worth mentioning that in the study of fractional inverse problems for linear equations, in general we do not need regularity assumption on the domain.

    In this article, we study the fractional type Calder\'on problem for the equation \eqref{Main equation} of reconstruction an unknown potential $q$ from the (exterior) \emph{Dirichlet-to-Neumann} (DN) map $\Lambda_q$:
	\begin{align*}
		\Lambda_q: H^s(\Omega_e)\to H^s(\Omega_e)^\ast, \quad f\mapsto (-\Delta)^s u|_{\Omega_e},
	\end{align*}
	for any sufficiently small exterior data $f\in H^s(\Omega_e)$, where $u\in H^s(\R^n)$ is the solution of \eqref{Main equation} and $H^s(\Omega_e)^\ast$ is the dual space of $H^s(\Omega_e)$. For the sake of self-containedness, we will provide the proof of the well-posedness of \eqref{Main equation} under the smallness condition of exterior data, which implies that the DN map $\Lambda_q$ of \eqref{Main equation} is well-defined in the exterior domain $\Omega_e$. The fractional Calder\'on problem was first proposed by Ghosh-Salo-Uhlmann \cite{ghosh2016calder}, and related fractional inverse problems have been investigated by many researchers recently, such as  \cite{CLL2017simultaneously,cekic2020calderon,ghosh2017calder,GRSU18,harrach2017nonlocal-monotonicity,harrach2020monotonicity,LL2020inverse,lai2019global,RS17,LLR2019calder} and the references therein. The key ingredients in the fractional inverse problems are the \emph{strong uniqueness} (Proposition \ref{Prop: strong uniqueness}) and the \emph{Runge approximation} (Theorem \ref{Thm:runge}) in  $L^p(\Omega)$, for $p>1$. Based on these properties, many researchers have developed the fractional Calder\'on problem with partial data, monotonicity-based inversion formula and simultaneously recovering problems.

	The research of fractional semilinear Schr\"odinger equations arises in the quantum effects in Bose-Einstein Condensation \cite{Uzar}. In the ideal boson systems, the Gross-Pitaevskii equations characterizes condensation of weakly interacting boson atoms at a low temperature, wherever the probability density of quantum particles is conserved. Moreover, in the inhomogeneous media with long-range or nonlocal interactions between particles, this yields the density profile no longer retains its shape as in the classical Gross-Pitaevskii equations. This dynamics can be described by the fractional Gross-Pitaevskii equation, regarded as the fractional semilinear Schr\"odinger equation, in which the turbulence and decoherence emerge. It was investigates in \cite{Kay} that the turbulence appears from the nonlocal property of the fractional Laplacian; while the local nonlinearity helps maintain coherence of the density profile.  
	
	In general, it is known that the nonlinear and nonlocal problems are harder than their local counterparts for forward mathematical problems.
For the local case, i.e., $s=1$, one can consider the analogous inverse boundary value problem for the semilinear elliptic equation $\Delta u + a(x,u)=0$ in $\Omega$ with $u=f$ on $\p \Omega$. Similar inverse problems are recently treated in the independent works \cite{FO19,LLLS2019nonlinear}. By using the knowledge of the corresponding DN map, the authors \cite{FO19,LLLS2019nonlinear} have introduced the \emph{higher order linearization} method, to investigate that the unknown coefficients can be uniquely determined by its associated DN map (on the boundary). In addition, \cite{LLLS2019partial,KU2019remark,KU2019partial} have extended the unique determination results into the partial data setup, and the key ingredient is also relied on the higher order linearization. More specifically, the \emph{first} linearization will make the unknown coefficients disappear, so that one can apply the \emph{density} property of the scalar products of \emph{harmonic functions} (see \cite{calderon,ferreira2009linearized}). For general linear elliptic equations, one needs more complicated results to prove the density property of the scalar products of solutions to the certain equation, which might involve the \emph{complex geometrical optics} solutions.

	Very recently, Lai and myself \cite{LL2020inverse} have studied related inverse problems for fractional semilinear elliptic equations. We can recover the unknown coefficients and obstacles by using the higher order linearization, where we have simply used a single parameter $\eps$. In fact, we only need to utilize a single exterior measurement to recover coefficient and obstacle simultaneously. The goal of this work is to study related fractional inverse problems for \eqref{Main equation} via the \emph{monotonicity tests}. 
	
	Let us formulate the \emph{if-and-only-if} monotonicity relations in the following. For any potentials $q_1,q_2 \in L^\infty(\Omega)$, we will use the monotonicity arguments and localized potentials for the linearized equations to show that 
	\begin{align}\label{if-and-only-if monotonicity}
		q_1 \leq q_2 \quad \text{ if and only if }\quad (D^m\Lambda_{q_1})_0\leq (D^m\Lambda_{q_2})_0,
	\end{align}
	where $m\in \N$ is the integer of the fractional elliptic equation \eqref{Main equation} with power type nonlinearities $q_j(x)u^m$, for $j=1,2$. Here $q_1\leq q_2$ means that $q_1(x)\leq q_2(x)$ for almost everywhere (a.e.) $x\in \Omega$.

	In this work, the inequality $(D^m\Lambda_{q_1})_0\leq (D^m\Lambda_{q_2})_0$ in \eqref{if-and-only-if monotonicity} is denoted in the sense that 
	\begin{align}\label{suitable sense of monotonicity}
     \langle \underbrace{[(D^m \Lambda_{q_1})_0-(D^m\Lambda_{q_2})_0]}_{m\text{-linear form}}\underbrace{(g,\ldots ,g)}_{m-\text{vector}} , h \rangle \leq 0,
	\end{align}
	for any $g\in C^\infty_c(\Omega_e)$ and for some suitable $h\in C^\infty_c(\Omega_e)$. The exterior data $h\in C^\infty_c(\Omega_e)$ would be chosen differently when the integer number $m$ is even or $m$ is odd. We will give more detailed discussions in Section \ref{Sec 3}. Here $(D^m \Lambda_q)_0$ denotes the $m$-th order derivative of the DN map $\Lambda_q$ evaluated at the $0$ exterior data, and it can be computed directly from 
	\[
	\left.(D^m\Lambda_q)_0(g,\ldots, g)\right|_{\Omega_e}=\left. \left. \p^m_\eps \right|_{\eps=0}(-\Delta)^s u_{\eps g}\right|_{\Omega_e},
	\]
	where $u_{\eps g}\in H^s(\R^n)$ is the solution of 
	$$
	(-\Delta)^s u + qu^m=0 \text{ in } \Omega \quad \text{ with } \quad u=\eps g \text{ in } \Omega_e.
	$$ 
	We will characterize the preceding discussions in Section \ref{Sec 2} with more details. In addition, once we know the information of exterior measurements $\Lambda_q$, then we can determine the $m$-th order derivative of $\Lambda_q$.

	The first main result in this work is that the if-and-only-if monotonicity relations \eqref{if-and-only-if monotonicity} yield a constructive uniqueness proof of the potential $q(x)$ by knowing the knowledge of the $m$-th order derivative of the DN map $\Lambda_q$. The first main result in this paper is stated as follows.
	
	\begin{thm}[The if-and-only-if monotonicity relations]\label{Thm: If-and-only-if monotonicity}
		Consider $\Omega\subset \R^n$, $n\geq 1$ to be a bounded domain with $C^{1,1}$ boundary $\p\Omega$, and $0<s<1$.  Let $q_1, q_2 \in L^\infty(\Omega)$, $m\geq 2$ and $m\in \N$.
		Let $\Lambda_{q_j}$ be the DN maps of the semilinear elliptic equations $(-\Delta)^s u + q_j u^{m}=0$ in $\Omega$ for $j=1,2$.
		Then we have 
		\begin{align}\label{if and only if monotonicity in Sec 4}
		q_1 \geq q_2 \text{ a.e. in }\Omega \quad \text{ if and only if } \quad (D^m_0 \Lambda_{q_1})_0 \geq (D^m\Lambda_{q_2})_0.
		\end{align}
	\end{thm}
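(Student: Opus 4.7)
The strategy is to reduce the abstract $m$-linear monotonicity inequality to a single explicit integral identity, and then handle the two implications separately: one by a pointwise sign argument and the other by a Runge/localized-potentials argument.

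First, I would make precise the $m$-th derivative $(D^m\Lambda_q)_0$. Formally writing $u_{\epsilon g} = \sum_{k\geq 1}\tfrac{\epsilon^k}{k!}w_k$ with $w_k = \partial_\epsilon^k u_{\epsilon g}|_{\epsilon=0}$ and plugging in, the first linearization $w_1$ solves $(-\Delta)^s w_1 = 0$ in $\Omega$, $w_1=g$ in $\Omega_e$, because $m\geq 2$ kills the potential term. Taking $m$ derivatives of $qu^m$ at $\epsilon=0$ leaves only the leading term $m!\,q\,w_1^m$, so $w_m$ solves
\[
(-\Delta)^s w_m = -m!\,q\,w_1^m \text{ in }\Omega, \qquad w_m=0 \text{ in }\Omega_e.
\]
Writing $v_g := w_1$ and, for test data $h\in C^\infty_c(\Omega_e)$, denoting by $v_h$ the fractional harmonic extension with $(-\Delta)^s v_h=0$ in $\Omega$ and $v_h=h$ in $\Omega_e$, the Parseval identity for $(-\Delta)^{s/2}$ and the vanishing $\int_\Omega v_m(-\Delta)^s v_h\,dx=0$, $\int_{\Omega_e} v_m\cdot(-\Delta)^s v_h\,dx=0$ (since $v_m=0$ outside and $(-\Delta)^s v_h=0$ inside) give the working formula
\[
\langle (D^m\Lambda_q)_0(g,\ldots,g), h\rangle = m!\int_\Omega q\,v_g^m\,v_h\,dx.
\]

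For the forward direction $q_1\geq q_2 \Rightarrow (D^m\Lambda_{q_1})_0\geq (D^m\Lambda_{q_2})_0$, subtract the two instances to obtain
\[
\langle [(D^m\Lambda_{q_1})_0-(D^m\Lambda_{q_2})_0](g,\ldots,g), h\rangle = m!\int_\Omega (q_1-q_2)\,v_g^m\,v_h\,dx,
\]
and then choose the test function $h$ according to the parity of $m$. When $m$ is odd I would simply take $h=g$, so that $v_g^m v_h = v_g^{m+1}\geq 0$ pointwise. When $m$ is even, $v_g^m\geq 0$ already, and I would take $h\in C^\infty_c(\Omega_e)$ with $h\geq 0$; the maximum principle (positivity preservation) for the fractional Laplacian with zero interior source and nonnegative exterior data then forces $v_h\geq 0$. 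In either case $(q_1-q_2)v_g^m v_h\geq 0$ a.e., which is the required sign.

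For the converse I would argue by contradiction, which is where the localized potentials enter. Assume the DN inequality holds but there is a measurable $E\subset\Omega$ with $|E|>0$ on which $q_1<q_2$. By the Runge approximation in $L^p(\Omega)$ (Theorem~\ref{Thm:runge}) applied to the linear equation $(-\Delta)^s v=0$ in $\Omega$, one can produce exterior data $g_k\in C^\infty_c(\Omega_e)$ (and in the even case separate $h_k\in C^\infty_c(\Omega_e)$ with $h_k\geq 0$) whose harmonic extensions concentrate on $E$ in the $L^p$-sense, approximating a target like $\chi_E$. Inserting these into the integral formula and using $q_1-q_2\in L^\infty$ together with the $L^p$-estimates of Section~\ref{Sec 2} to pass to the limit in the $m$-th power yields
\[
\lim_{k\to\infty} m!\int_\Omega (q_1-q_2)\,v_{g_k}^m v_{h_k}\,dx = m!\int_E (q_1-q_2)\,(\text{positive density})\,dx < 0,
\]
contradicting the assumed nonnegativity of the DN-map difference in the sense \eqref{suitable sense of monotonicity}.

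The main obstacle, and the technically delicate step, is the last one: because the coefficient enters through $v_g^m$ rather than $v_g$, a plain $L^2$-Runge density is not enough, and one must upgrade the approximation to $L^p$ for a sufficiently large $p$ (dictated by $n$, $s$ and $m$) so that products of $m$ extensions can be controlled. This is exactly where the $L^p$-Runge approximation and the $L^p$-estimates for the fractional Laplacian — and hence the $C^{1,1}$-regularity of $\partial\Omega$ — are used in an essential way, and it is the only nontrivial analytic input beyond the explicit derivation of the $m$-linear formula.
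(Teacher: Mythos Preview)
Your plan is essentially the paper's proof: the derivation of the integral identity $\langle (D^m\Lambda_q)_0(g,\ldots,g),h\rangle=m!\int_\Omega q\,v_g^m v_h$, the forward direction via the parity split (take $h=g$ for odd $m$, take $h\geq 0$ and invoke the maximum principle for even $m$), and the converse via contradiction together with $L^p$-Runge approximation are exactly what the paper does.

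Two differences worth noting. First, in the converse argument the paper does not approximate $\chi_E$ directly; it passes through the normalized \emph{localized potentials} (Corollary~\ref{cor:localized_potentials}), producing $v^k$ with $\int_M|v^k|^a\to\infty$ and $\int_{\Omega\setminus M}|v^k|^a\to 0$ (with $a=m+1$ for odd $m$, $a=m$ for even $m$), which drives the pairing to $-\infty$. Your route---approximate $\chi_E$ in $L^{m+1}$ and use $|v_{g_k}^{m+1}-\chi_E|\leq C(|v_{g_k}|^m+1)|v_{g_k}-\chi_E|$ plus H\"older to pass to the limit---is a legitimate alternative for odd $m$ and slightly more direct.

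Second, and this is the one genuine soft spot: in the even case you write ``separate $h_k\in C^\infty_c(\Omega_e)$ with $h_k\geq 0$ whose harmonic extensions concentrate on $E$''. The Runge theorem gives no sign control on the exterior data, so you cannot simultaneously demand $h_k\geq 0$ and $v_{h_k}\to\chi_E$. The paper sidesteps this by taking a \emph{single fixed} $h\in C^\infty_c(\Omega_e)$ with $h\geq 0$, $h\not\equiv 0$; then the strong maximum principle plus $v_h\in C^s(\overline\Omega)$ give a uniform lower bound $v_h\geq c_h>0$ on $\overline\Omega$, and only the $g_k$ are varied. With that choice your limit becomes $\int_E (q_1-q_2)\,v_h\,dx\leq -\delta c_h|E|<0$, and the argument closes.
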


\begin{rmk} It is worth mentioning that 
\begin{itemize}
	\item[(a)] 	When $s=1$, i.e., for the local case, one can only expect that the monotonicity relations of potentials will imply the monotonicity relations of the corresponding DN maps. It is hard to show a converse statement to be true of the monotonicity formula. Fortunately, with the aids of the strong uniqueness of the fractional Laplacian, we are able to prove Theorem \ref{Thm: If-and-only-if monotonicity}(see Section \ref{Sec 4}), which is similar to the works \cite{harrach2017nonlocal-monotonicity,harrach2020monotonicity}.
	
	\item[(b)] It is natural to consider the $m$-th order derivative of DN map $(D^m\Lambda_q)_0$ instead of the original DN map $\Lambda_q$. Due to the well-posedness, one can trace the information of $(D^k \Lambda_q)_0$ for all $k\in \N$, and one cannot see any differences of $(D^k \Lambda_q)_0$ for any $k=0,1,\cdots, m-1$ (see Section \ref{Sec 2}).
\end{itemize}
\end{rmk}

The proof of Theorem \ref{Thm: If-and-only-if monotonicity} is based on the monotonicity formulas and the localized potentials for the fractional Laplacian (see Section \ref{Sec 3} and Section \ref{Sec 4}). 
Thanks to the strong uniqueness for the fractional Laplacian, one can approximate any $L^a$ function by solutions of the fractional Laplacian, for any $a >1 $. Then one can construct the localized potentials for the fractional Laplacian by using the standard normalization technique. 

In the study of inverse boundary value problem, the technique of combining monotonicity relations with localized potentials \cite{gebauer2008localized} is a useful approach, and this method has already been studied extensively in a number of results, such as  \cite{arnold2013unique,barth2017detecting,brander2018monotonicity,griesmaier2018monotonicity,harrach2009uniqueness,harrach2012simultaneous,harrach2017nonlocal-monotonicity,harrach2018localizing,HPS2019dimension,HPSmonotonicity,harrach2010exact,harrach2013monotonicity,harrach2017local,seo2018learning}. Also, several works have built practical reconstruction based on monotonicity properties \cite{daimon2020monotonicity,garde2017comparison,garde2019reconstruction,garde2017convergence,garde2019regularized,harrach2015combining,harrach2016enhancing,harrach2018monotonicity,harrach2015resolution,maffucci2016novel,su2017monotonicity,tamburrino2002new,tamburrino2016monotonicity,ventre2017design,zhou2018monotonicity}.

The second main result is that we investigate the inverse obstacle problem for the exterior problem \eqref{Main equation} of determining regions where a potential $q\in L^\infty(\Omega)$ changes from a known potential $q_0 \in L^\infty(\Omega)$. Our goal is not only to show the unique determination the unknown obstacle from the exterior measurements, but also we will give a reconstruction formula of the support $q-q_0$ by comparing $(D^m\Lambda_q)_0$ with $(D^m\Lambda_{q_0})_0$.
The potential $q_0$ is denoted as a background coefficient, and $q$ denotes the coefficient function in the presence of anomalies or scatterers.

In the spirit of \cite{garde2017comparison,harrach2013monotonicity,harrach2017nonlocal-monotonicity,harrach2020monotonicity}, we will show that the support of $q-q_0$ can be reconstructed via the monotonicity tests. 
Let $M\subset \Omega$ be a measurable set, and we define the \emph{testing operator} $T_M:H^s(\Omega_e)^m\to H^s(\Omega)^\ast $ via the pairing that 
\begin{align}\label{testing operator}
	\langle (T_M )(g,\ldots, g),  h \rangle :=	\int_{\Omega_e} (T_M )(g,\ldots, g) h\, dx =\int_M v_g^m v_h \, dx,
\end{align}
where $T_M$ is regarded as an $m$-form acting on $m$-vector valued functions. Here $v_g$ and $v_h$ are the solution of the fractional Laplacian in $\Omega$ with $v_g=g$ and $v_h=h$ in $\Omega_e$, respectively. Notice that the testing operator \eqref{testing operator} can be computed since we know the location of the measurable set $M\subset \Omega$ and the information of $v_g,v_h$ once $g,h\in C^\infty_c(\Omega_e)$ are given in the exterior domain $\Omega_e$.

The following theorem shows that the support of $q-q_0$ can be found by shrinking closed set. We also refer readers to \cite{harrach2013monotonicity,garde2019regularized,harrach2017nonlocal-monotonicity,harrach2020monotonicity} for the linear cases.
\begin{thm}[Unknown inclusion detection]\label{Thm:support_from_closed_sets}
	Consider $\Omega\subset \R^n$, $n\geq 1$ to be a bounded domain with $C^{1,1}$ boundary $\p\Omega$, and $0<s<1$.  Let $q_1, q_2 \in L^\infty(\Omega)$, $m\geq 2$ and $m\in \N$.
	Let $\Lambda_{q_j}$ be the DN maps of the semilinear elliptic equations $(-\Delta)^s u + q_j u^{m}=0$ in $\Omega$ for $j=1,2$.
	For each closed subset $C\subseteq \Omega$, 
	\begin{align*}
		\lefteqn{\mathrm{supp}(q-q_0)\subseteq C,}\\
		& \quad \text{ if and only if } \quad  \exists \alpha>0:\ 
		-\alpha {T}_C \leq (D^m\Lambda_q)_0-(D^m\Lambda_{q_0})_0\leq   \alpha {T}_C.
	\end{align*}
	Thus,
	\begin{align*}
		\lefteqn{\mathrm{supp}(q-q_0)}\\
		&=\bigcap \left\{ C\subseteq \Omega \text{ closed}:\ \exists \alpha>0:\ 	-\alpha {T}_C \leq (D^m\Lambda_q)_0-(D^m\Lambda_{q_0})_0\leq   \alpha {T}_C\right\}.
	\end{align*}
\end{thm}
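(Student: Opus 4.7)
The plan is to reduce the two-sided operator inequality to a pointwise comparison of the integrals
\[
\int_\Omega (q-q_0)\, v_g^{m} v_h \, dx \quad \text{versus} \quad \int_M v_g^m v_h\, dx,
\]
which is exactly the content of the testing operator $T_M$ defined in \eqref{testing operator}. The key tools are (i) a closed-form expression for $(D^m\Lambda_q)_0$ as a multilinear form involving the fractional-harmonic extensions $v_g,v_h$ (this should fall out of the $m$-th order linearization performed in Section~\ref{Sec 2}, since the first $m-1$ derivatives of $u_{\epsilon g}$ at $\epsilon=0$ are fractional-harmonic and only the $m$-th derivative sees $q u^m$), and (ii) the localized potentials for $(-\Delta)^s$ obtained from strong uniqueness and Runge approximation as described after Theorem~\ref{Thm: If-and-only-if monotonicity}.

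The forward direction is the easy one. Assuming $\supp(q-q_0)\subseteq C$, write
\[
\bigl\langle\bigl[(D^m\Lambda_q)_0-(D^m\Lambda_{q_0})_0\bigr](g,\dots,g),h\bigr\rangle = -c_m\int_C (q-q_0)\, v_g^m v_h\, dx
\]
for some combinatorial constant $c_m$, and then use $\|q-q_0\|_{L^\infty(\Omega)}$ as a pointwise bound against $v_g^m v_h$. Setting $\alpha := c_m\|q-q_0\|_{L^\infty(\Omega)}$ produces the claimed sandwich by $\pm\alpha T_C$. The only delicacy is that the inequality is meant in the sense of \eqref{suitable sense of monotonicity}, so one must handle the sign of $v_h$ according to the parity of $m$ exactly as in the discussion following \eqref{if-and-only-if monotonicity}; but once $h$ is chosen consistently, the bound is immediate.

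The reverse direction is proved by contraposition. Suppose $\supp(q-q_0)\not\subseteq C$. Then one can find a small ball $B\Subset \Omega\setminus C$ and a sign $\sigma\in\{\pm 1\}$ such that $\sigma(q-q_0)\geq c_0>0$ on a subset of $B$ of positive measure (by passing to Lebesgue points of $q-q_0$). The plan is then to invoke the localized potentials for $(-\Delta)^s$: for any $a>1$, Runge approximation allows one to construct $g_k\in C_c^\infty(\Omega_e)$ whose associated fractional-harmonic extensions $v_{g_k}$ satisfy $\|v_{g_k}\|_{L^{a}(B)}\to\infty$ while $\|v_{g_k}\|_{L^{a}(C)}$ remains bounded, and simultaneously to pick $h$ (constant or in an appropriate class on $C^\infty_c(\Omega_e)$) making $v_h$ essentially a nonnegative bump localized near $B$. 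Applying this with $a$ large enough that $v_{g_k}^m v_h\in L^1$ (for instance $a=m+1$), the left side of the candidate inequality blows up like $\int_B (q-q_0) v_{g_k}^m v_h \to \pm\infty$ while $\alpha \int_C v_{g_k}^m v_h$ remains bounded for every fixed $\alpha>0$. This contradicts $-\alpha T_C\leq (D^m\Lambda_q)_0-(D^m\Lambda_{q_0})_0\leq \alpha T_C$ and proves the equivalence.

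The intersection identity then follows at once: $\supp(q-q_0)$ itself is closed and satisfies the inequality by the forward direction, so it lies in the intersection, while any closed $C\subseteq\Omega$ with $\supp(q-q_0)\not\subseteq C$ is excluded by the reverse direction. The main obstacle I expect is the precise formulation of the localized potentials at the $m$-form level, i.e.\ making $v_g^m v_h$ concentrate in $B$ while being small on $C$ for all admissible $\alpha$ simultaneously; this requires applying the $L^p$-Runge approximation of Theorem~\ref{Thm:runge} at an exponent adapted to the power $m$, and treating the parity of $m$ so that the monotonicity sense \eqref{suitable sense of monotonicity} is respected. Everything else is a routine adaptation of the linear-case arguments in \cite{harrach2017nonlocal-monotonicity,harrach2020monotonicity} to the $m$-th derivative DN map.
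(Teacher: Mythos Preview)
Your proposal is correct and close in spirit to the paper's argument, but the paper organizes the proof more economically. Rather than re-running the localized potentials argument inside this proof, the paper observes that by the linearity of $q\mapsto (D^m\Lambda_q)_0$ (Proposition~\ref{Prop: derivs_and_integral_formula}), the two-sided operator inequality $-\alpha T_C \leq (D^m\Lambda_q)_0-(D^m\Lambda_{q_0})_0 \leq \alpha T_C$ is \emph{identical} to $(D^m\Lambda_{q_0-\alpha\chi_C})_0 \leq (D^m\Lambda_q)_0 \leq (D^m\Lambda_{q_0+\alpha\chi_C})_0$. Theorem~\ref{Thm: If-and-only-if monotonicity} then converts this directly into the pointwise sandwich $-\alpha\chi_C \leq q-q_0 \leq \alpha\chi_C$, which is equivalent to $\supp(q-q_0)\subseteq C$. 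So both directions fall out of Theorem~\ref{Thm: If-and-only-if monotonicity} in one line each, with no need to revisit localized potentials, parity of $m$, or the choice of $h$ here---all of that is already packaged inside Theorem~\ref{Thm: If-and-only-if monotonicity}. Your approach unrolls that theorem inline, which works but duplicates effort; the paper's route is cleaner because it treats the if-and-only-if monotonicity as the fundamental result and this theorem as a corollary of it plus linearity. (Minor point: your combinatorial constant should be $+m!$, not $-c_m$.)
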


Note that Theorem \ref{Thm:support_from_closed_sets} is not a deterministic result, but it is a reconstruction result. The proof of Theorem \ref{Thm:support_from_closed_sets} can be regarded as an application of Theorem \ref{Thm: If-and-only-if monotonicity}. Via the monotonicity tests, we can give a reconstruction algorithm by utilizing the testing operator $T_M$ in Section \ref{Sec 4}.

The last main contribution of this article is about the \emph{Lipschitz stability} of the fractional inverse problem with finitely many measurements. The Lipschitz stability with finitely many measurements has been studied by in various mathematical settings, we refer the reader to \cite{HM2019global_stability,harrach2020monotonicity,sincich2007lipschitz,RS2018lipschitz} and references therein for more detailed descriptions. In this work, we only consider the case that the set $\Q \subset L^\infty(\Omega)$ is a finite-dimensional subspace of piecewise analytic functions, and 
\[
\Q_{\lambda}:=\left\{q\in \Q: \ \norm{q}_{L^\infty(\Omega)}\leq \lambda \right\},
\]
for some constant $\lambda>0$.

\begin{thm}\label{Thm:baby stability}
	Consider $\Omega\subset \R^n$, $n\geq 1$ to be a bounded domain with $C^{1,1}$ boundary $\p\Omega$, and $0<s<1$.  Let $q_1, q_2 \in L^\infty(\Omega)$, $m\geq 2$ and $m\in \N$.
	Let $\Lambda_{q_j}$ be the DN maps of the semilinear elliptic equations $(-\Delta)^s u + q_j u^{m}=0$ in $\Omega$ for $j=1,2$.
	Then there exists a constant $c_0>0$ such that 
	\begin{align}\label{stability estimate}
	 \norm{(D^m\Lambda_{q_1})_0 - (D^m\Lambda_{q_2})_0}_{\ast}\geq c_0 \norm{q_1-q_2}_{L^\infty(\Omega)}, 
	\end{align}
	for any $q_1, q_2 \in \Q_{\lambda}$.
\end{thm}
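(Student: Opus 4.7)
The plan is to exploit two structural facts to reduce the nonlinear stability estimate \eqref{stability estimate} to a finite-dimensional compactness argument. First, an $m$-th order linearization of \eqref{Main equation} (cf.\ Section \ref{Sec 2}) will yield the integral identity
\[
\langle (D^m\Lambda_q)_0(g,\ldots,g), h\rangle = m!\int_{\Omega}q\, v_g^m v_h\,dx,\qquad g,h\in C^\infty_c(\Omega_e),
\]
where $v_g,v_h\in H^s(\R^n)$ are the \emph{$q$-independent} solutions of $\Ds v=0$ in $\Omega$ with the prescribed exterior data. In particular, the map $L\colon q\mapsto (D^m\Lambda_q)_0$ is \emph{linear} and bounded from $L^\infty(\Omega)$ into the target space of $m$-linear forms on $H^s(\Omega_e)$ with values in $H^s(\Omega_e)^\ast$, and $(D^m\Lambda_{q_1})_0-(D^m\Lambda_{q_2})_0=L(q_1-q_2)$. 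Second, applying Theorem \ref{Thm: If-and-only-if monotonicity} to the pair $(q,0)$ shows that $Lq=0$ forces both $q\geq 0$ and $q\leq 0$ a.e.\ in $\Omega$, so $L$ is injective on $L^\infty(\Omega)$.

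With these two ingredients in place, I would argue \eqref{stability estimate} by contradiction. If it fails, there exist sequences $(q_1^{(k)}),(q_2^{(k)})\subset\Q_\lambda$ with $q_1^{(k)}\neq q_2^{(k)}$ and
\[
\norm{L(q_1^{(k)}-q_2^{(k)})}_\ast \leq \tfrac{1}{k}\norm{q_1^{(k)}-q_2^{(k)}}_{L^\infty(\Omega)}.
\]
Setting $\hat q^{(k)}:=(q_1^{(k)}-q_2^{(k)})/\norm{q_1^{(k)}-q_2^{(k)}}_{L^\infty(\Omega)}\in\Q$ normalizes so that $\norm{\hat q^{(k)}}_{L^\infty(\Omega)}=1$. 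Since $\Q$ is a finite-dimensional subspace of $L^\infty(\Omega)$, its $L^\infty$-unit sphere is compact; hence, up to a subsequence, $\hat q^{(k)}\to\hat q\in\Q$ with $\norm{\hat q}_{L^\infty(\Omega)}=1$. Boundedness of $L$ gives $L\hat q^{(k)}\to L\hat q$, while the assumption yields $L\hat q^{(k)}\to 0$. Therefore $L\hat q=0$, and the injectivity of $L$ forces $\hat q=0$, contradicting $\norm{\hat q}_{L^\infty(\Omega)}=1$.

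The only substantive step is the derivation of the integral identity in the first paragraph. For this I would expand the unique small solution $u_{\epsilon g}$ of \eqref{Main equation} as $u_{\epsilon g}=\epsilon v_g+\epsilon^m u_m+O(\epsilon^{m+1})$; the intermediate powers vanish because the nonlinearity $qu^m$ first contributes at order $\epsilon^m$, so each $u_k$ for $2\leq k\leq m-1$ would solve a homogeneous fractional Laplace problem with zero exterior data and therefore vanish. At order $\epsilon^m$ one finds that $u_m$ solves $\Ds u_m=-qv_g^m$ in $\Omega$ with $u_m=0$ in $\Omega_e$, so that $\partial_\epsilon^m|_{\epsilon=0}\Ds u_{\epsilon g}|_{\Omega_e}=m!\,\Ds u_m|_{\Omega_e}$; pairing this against $h\in C^\infty_c(\Omega_e)$ via the fractional integration-by-parts formula (and using that $v_h$ is a solution of the homogeneous problem in $\Omega$) produces $m!\int_\Omega qv_g^m v_h\,dx$. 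I expect the principal obstacle to be this bookkeeping of the $\epsilon$-expansion and the justification of the $m$-fold differentiability in $\epsilon$ of $u_{\epsilon g}$ via the well-posedness theory of Section \ref{Sec 2}; once the identity is in hand, the compactness closure is essentially immediate thanks to the injectivity content of Theorem \ref{Thm: If-and-only-if monotonicity} and the finite dimensionality of $\Q$.
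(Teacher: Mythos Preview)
Your proposal is correct and matches the paper's approach: both exploit the linearity of $q\mapsto(D^m\Lambda_q)_0$ coming from the integral identity (Proposition~\ref{Prop: derivs_and_integral_formula}), normalize to the unit sphere of the finite-dimensional space $\mathrm{span}\,\Q$, and close via compactness together with the localized potentials---the paper formulates this as a direct inf--sup that is attained and positive, while you phrase the same content as a contradiction with a normalized sequence and invoke Theorem~\ref{Thm: If-and-only-if monotonicity} (equivalently Corollary~\ref{Cor: global uniqueness}) for injectivity rather than citing Corollary~\ref{cor:localized_potentials} directly. One caveat: your blanket claim that $L$ is \emph{bounded} from $L^\infty(\Omega)$ into the $\norm{\cdot}_\ast$-normed target is not obviously justified by Sobolev embedding for all $m,n,s$, but your argument in fact only needs that $q\mapsto\int_\Omega q\,v_g^m v_h\,dx$ is continuous for each fixed $g,h$ (equivalently, that $q\mapsto\norm{Lq}_\ast$ is lower semicontinuous as a supremum of continuous functionals), which is immediate and is exactly what the paper uses.
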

 
 \begin{rmk} The operator norm  $\norm{\cdot}_{\ast}$ for the $m$-th order derivative DN map is defined by 
 	\begin{align*}
 	\norm{A}_{\ast}
 	=\sup \left\{\left|\langle A(g,\ldots, g), h \rangle\right|: \ g,h\in  C^\infty_c(\Omega_e), \norm{g}_{H^s}=\norm{h}_{H^s}=1 \right\}.
 	\end{align*}
 \end{rmk}

One can show that a sufficiently high number
of the exterior DN maps uniquely determines a potential in $\Q_\lambda$ and prove a Lipschitz stability result for the equation \eqref{Main equation}. In order to formulate the result, let us denote the orthogonal projection operators from $H^s(\Omega_e)$ to a subspace $H$ by $P_H$,
i.e. $P_H$ is the linear operator with
\[
P_H:\ H^s(\Omega_e)\to H, \quad P_H g:=\begin{cases}
g & \text{ if $g\in H$,}\\ 0 & \text{ if $g\in H^\perp\subseteq H^s(\Omega_e).$}
\end{cases}
\]
$P_H':\ H^*\to H^s(\Omega_e)^*$ stands for the dual operator of $P_H$. 

\begin{thm}\label{Thm:stability}
	Let $\Omega\subset \R^n$, $n\geq 1$ be a bounded domain with $C^{1,1}$ boundary $\p\Omega$, and $0<s<1$.  Let $m\geq 2$, $m\in \N$.
	Let $q_1, q_2 \in L^\infty(\Omega)$, 
	and $\Lambda_{q_j}$ be the DN maps of the semilinear elliptic equations $(-\Delta)^s u + q_j u^{m}=0$ in $\Omega$ for $j=1,2$.
	For every sequence of subspaces 
	\[
	H_1\subseteq H_2\subseteq H_3\subseteq ...\subseteq H^s(\Omega_e), \quad \text{ and } \quad
	\overline{\bigcup_{\ell \in \N} H_\ell}=H^s(\Omega_e),
	\]
	there exists $k\in \N$, and $c>0$, so that
	\begin{align}\label{Lipschitz stability estimate}
	\left\| P_{H_\ell}'  \left( (D^m\Lambda_{q_2})_0-(D^m\Lambda_{q_1})_0 \right)P_{H_\ell}\right\|_{\ast}
	\geq c \norm{q_2-q_1}_{L^\infty(\Omega)}
	\end{align}
	for all $q_1,q_2\in \Q_{\lambda}$ and all $l\geq k$.
\end{thm}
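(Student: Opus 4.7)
The plan is to exploit the fact that $(D^m\Lambda_{q_2})_0 - (D^m\Lambda_{q_1})_0$ depends \emph{linearly} on $q_2 - q_1$, and then combine Theorem~\ref{Thm:baby stability} with Dini's theorem on the compact unit sphere of the finite-dimensional subspace $\Q$.

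First, by the $m$-th order linearization at zero exterior data (cf.\ Section~\ref{Sec 3}; compare the analogous computation in \cite{LL2020inverse}), one obtains the explicit representation
\[
\langle \bigl[(D^m\Lambda_{q_2})_0-(D^m\Lambda_{q_1})_0\bigr](g,\ldots,g), h \rangle = m!\int_\Omega (q_2-q_1)(x)\, v_g(x)^m v_h(x)\, dx,
\]
where $v_g, v_h \in H^s(\R^n)$ are the $s$-harmonic extensions of $g,h$, i.e.\ solutions of $(-\Delta)^s v=0$ in $\Omega$ with the prescribed exterior data. Crucially, $v_g,v_h$ do not depend on $q_1,q_2$. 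For $w \in L^\infty(\Omega)$ set
\[
F_\ell(w) := \sup\!\LCB\, m! \LV {\textstyle \int_\Omega} w\, v_g^m v_h\,dx\RV : g,h\in H_\ell,\ \norm{g}_{H^s}=\norm{h}_{H^s}=1 \,\RCB,
\]
and let $F_\infty(w)$ denote the analogous supremum taken over all of $H^s(\Omega_e)$. By the definitions of $\norm{\cdot}_\ast$ and $P_{H_\ell}$, the norm appearing on the left of \eqref{Lipschitz stability estimate} is exactly $F_\ell(q_2-q_1)$.

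Next, I would verify three properties of the family $(F_\ell)$: (i) monotonicity $F_\ell \leq F_{\ell+1}$, immediate from $H_\ell \subseteq H_{\ell+1}$; (ii) Lipschitz continuity on $L^\infty(\Omega)$ of each $F_\ell$ (and of $F_\infty$) with constant independent of $\ell$, which follows from a uniform $L^{m+1}$ bound on $v_g^m v_h$ coming from the Rellich/$L^p$-regularity theory for the $s$-harmonic extension on a $C^{1,1}$-domain recalled in Section~\ref{Sec 1}; and (iii) pointwise convergence $F_\ell(w)\nearrow F_\infty(w)$ for every fixed $w$. Property~(iii) is the technical heart: given a near-maximiser $g,h\in H^s(\Omega_e)$ of $F_\infty(w)$, use the density $\overline{\bigcup_\ell H_\ell}=H^s(\Omega_e)$ to select $g_\ell,h_\ell\in H_\ell$ converging to $g,h$; the \emph{compact} embedding of the $s$-harmonic extension $g \mapsto v_g\vert_\Omega$ into $L^{m+1}(\Omega)$ then yields $v_{g_\ell}^m v_{h_\ell}\to v_g^m v_h$ in $L^1(\Omega)$, which gives convergence of the integrals. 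Combining Theorem~\ref{Thm:baby stability} with the linearity in $q_2-q_1$ and positive $1$-homogeneity of $F_\infty$ (take, for $w\in\Q$ with $\norm{w}_{L^\infty}=1$, $q_1=-\lambda w$ and $q_2=\lambda w$, both lying in $\Q_\lambda$) yields $F_\infty(w)\geq c_0\norm{w}_{L^\infty(\Omega)}$ for all $w\in\Q$.

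Dini's theorem then closes the argument. The unit sphere $S:=\{w\in\Q:\norm{w}_{L^\infty(\Omega)}=1\}$ is compact since $\Q$ is finite-dimensional, and on $S$ the continuous functions $F_\ell$ increase monotonically to the continuous strictly positive limit $F_\infty\geq c_0$, so the convergence is uniform. Hence there exists $k\in\N$ with $F_\ell\geq c_0/2$ on $S$ for all $\ell\geq k$, and positive $1$-homogeneity upgrades this to $F_\ell(w)\geq (c_0/2)\norm{w}_{L^\infty(\Omega)}$ for every $w\in\Q$. Specialising to $w=q_2-q_1$ with $q_1,q_2\in\Q_\lambda$ gives \eqref{Lipschitz stability estimate} with $c:=c_0/2$. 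The hardest step is property~(iii): one needs the compactness of $g\mapsto v_g\vert_\Omega$ as a map $H^s(\Omega_e)\to L^{m+1}(\Omega)$, which is exactly where the $C^{1,1}$-regularity of $\partial\Omega$ and the $L^p$-estimates for the fractional Laplacian emphasised in the introduction are indispensable.
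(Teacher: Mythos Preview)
Your proposal is correct and follows essentially the same strategy as the paper: both reduce to the compact unit sphere of the finite-dimensional space $\Q$, invoke Theorem~\ref{Thm:baby stability} for the pointwise limit, and exploit monotonicity in $\ell$; you package the compactness step via Dini's theorem, whereas the paper unwinds the equivalent contradiction/subsequence extraction argument by hand. One minor overclaim: for property~(iii) you only need \emph{continuity} (boundedness) of $g\mapsto v_g|_\Omega$ from $H^s(\Omega_e)$ into $L^{m+1}(\Omega)$, not compactness---once $g_\ell\to g$ in $H^s$, continuity already gives $v_{g_\ell}\to v_g$ in $L^{m+1}$ and hence $v_{g_\ell}^m v_{h_\ell}\to v_g^m v_h$ in $L^1$.
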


The article is structured as follows. In Section \ref{Sec 2}, we offer preliminary results for function space (fractional Sobolev spaces and H\"older spaces). We also proved the well-posedness of \eqref{Main equation}, i.e., there exists a unique solution $u$ of \eqref{Main equation}, whenever the exterior Dirichlet data $f$ are sufficiently small. In Section \ref{Sec 3}, we derive the monotonicity relations between potentials and its corresponding $m$-th order derivative DN maps. 
By combining with the monotonicity relations and localized potentials, we can prove the converse monotonicity relations in Section \ref{Sec 4}, so that we can prove our main results Theorem \ref{Thm: If-and-only-if monotonicity} and Theorem \ref{Thm:support_from_closed_sets}. We prove the Lipschitz stability results in Section \ref{Sec 5}. Finally, we recall some known results that the $L^p$-type estimates of solutions, and the maximum principle of the fractional Laplacian in Appendix \ref{Appendix} and Appendix \ref{Appendix max}, respectively.

\section{Preliminaries}\label{Sec 2}

In this section, we introduce function spaces and well-posedness of the Dirichlet problem \eqref{Main equation}. The well-posedness of $(-\Delta)^s u +a(x,u)=0$ has been proved in \cite{LL2020inverse}. We give a similar proof for the sake of completeness under a slightly weaker regularity assumption on the coefficient $a(x,u)=q(x)u^{m}$, when $q\in L^\infty(\Omega)$ and for $m\geq 2$, $m\in \N$.
Let us recall several function spaces which we will use in the rest of the paper.

\subsection{Function spaces}

Recalling the definition H\"older spaces as follows. Let $D\subset\R^n$ be an open set, $k\in \N \cup \{0\}$ and $0<\alpha <1$, then the space $C^{k,\alpha}(D)$ is defined by
\[
C^{k,\alpha}(D):=\left\{f:D\to \R:\ \norm{f}_{C^{k,\alpha}(D)}<\infty \right\}.
\]
The norm $\norm{\cdot}_{C^{k,\alpha}(D)}$ is given by 
\begin{align*}
	\norm{f}_{C^{k,\alpha}(D)}:=&\sum_{|\beta|\leq k}\norm{\p ^\beta f}_{L^\infty(D)}+\sum_{|\beta|=k}\sup_{\begin{subarray}{c}
		x\neq y, \\
		x,y\in D
		\end{subarray}}\frac{|\p ^\beta f(x)-\p^\beta f(y)|}{|x-y|^\alpha} \\
	=&\sum_{|\beta|\leq k}\norm{\p ^\beta f}_{L^\infty(D)} +\sum_{|\beta|=k}[\p ^\beta f]_{C^\alpha(D)}
\end{align*}
where $\beta=(\beta_1,\ldots,\beta_n)$ is a multi-index with $\beta_i \in \N \cup \{0\}$ and $|\beta|=\beta_1 +\ldots +\beta_n$. Here $[\p ^\beta f]_{C^\alpha(D)}$ is denoted as the seminorm of $C^{0,\alpha}(D)$.
Furthermore, we also denote the space
\[
C_0^{k,\alpha}(D):=\text{closure of }C^\infty_c(D) \text{ in }C^{k,\alpha}(D).
\]
We also denote $C^\alpha(D) \equiv C^{0,\alpha}(D)$ when $k=0$.

We next remind readers in the context of fractional Sobolev spaces. Given $0<s<1$, the $L^2$-based fractional Sobolev space is $H^{s}(\mathbb{R}^{n}):=W^{s,2}(\mathbb{R}^{n})$ with the norm 
\begin{equation}\notag
\|u\|_{H^{s}(\mathbb{R}^{n})}^{2}\\=\|u\|_{L^{2}(\mathbb{R}^{n})}^{2}+\|(-\Delta)^{s/2}u\|_{L^{2}(\mathbb{R}^{n})}^{2}.\label{eq:H^s norm}
\end{equation}
Furthermore, via the Parseval identity, the semi-norm $\|(-\Delta)^{s/2}u\|_{L^{2}(\mathbb{R}^{n})}^{2}$
can be rewritten as 
\[
\|(-\Delta)^{s/2}u\|_{L^{2}(\mathbb{R}^{n})}^{2}=\left((-\Delta)^{s}u,u\right)_{\mathbb{R}^{n}},
\]
where $(-\Delta)^s $ is the fractional Laplacian \eqref{fractional Laplacian}.

Let $D\subset \R^n $ be an open set and $a\in\mathbb{R}$,
then we denote the following Sobolev spaces, 
\begin{align*}
H^{a}(D) & :=\left\{u|_{D}:\, u\in H^{a}(\mathbb{R}^{n})\right\},\\
\widetilde{H}^{a}(D) & :=\text{closure of \ensuremath{C_{c}^{\infty}(D)} in \ensuremath{H^{a}(\mathbb{R}^{n})}},\\
H_{0}^{a}(D) & :=\text{closure of \ensuremath{C_{c}^{\infty}(D)} in \ensuremath{H^{a}(D)}},
\end{align*}
and 
\[
H_{\overline{D}}^{a}:=\left\{u\in H^{a}(\mathbb{R}^{n}):\,\mathrm{supp}(u)\subset\overline{D}\right\}.
\]
The fractional Sobolev space $H^{a}(D)$ is complete under the norm
\[
\|u\|_{H^{a}(D)}:=\inf\left\{ \|v\|_{H^{a}(\mathbb{R}^{n})}:\,v\in H^{a}(\mathbb{R}^{n})\mbox{ and }v|_{D}=u\right\} .
\]
Moreover, when $D$ is a Lipschitz domain, the dual spaces can be expressed as 
\begin{align*}
(H^s_{\overline{D}}(\R^n))^\ast = H^{-s}(D), \quad \text{ and }\quad (H^s(D))^\ast=H^{-s}_{\overline D}(\R^n).
\end{align*}
If reader are interested in the properties of fractional Sobolev spaces, we refer readers to the references \cite{di2012hitchhiks,mclean2000strongly}.

\subsection{The exterior Dirichlet problem}
For $m\geq 2$, $m\in \N$ and $0<s<1$. Let $\Omega \subset \R^n$ be a bounded domain with $C^{1,1}$ boundary for $n\geq 1$, and let $q(x)\in L^\infty(\Omega)$. Let us prove the well-posedness for the exterior Dirichlet problem 
	\begin{align}\label{Dirichlet problem for the well-posedness}
\begin{cases}
(-\Delta)^s u + qu^m =0 & \text{ in }\Omega ,\\
u=f &\text{ in } \Omega_e ,
\end{cases}
\end{align}
under the condition that $\norm{f}_{C^\infty_c(\Omega_e)}$ is sufficiently small. 
	
\begin{prop}[Well-posedness]\label{Prop:well posedness}
	Let $\Omega\subset \R^n$, $n\geq 1$ be a bounded domain with $C^{1,1}$ boundary $\p\Omega$, and $0<s<1$.
	Suppose that $q=q(x)\in L^\infty(\Omega)$, $m\geq 2$ and $m\in \N$.
	Then there exists $\varepsilon>0$ such that when
	\begin{align}\label{small boundary}
	f\in \mathcal{E}_\vareps:=\left\{f\in C^\infty_c(\Omega_e) :\ \norm{f}_{C^\infty_c(\Omega_e)}\leq \vareps \right\},
	\end{align} 
	the boundary value problem \eqref{Dirichlet problem for the well-posedness} has a unique solution $u$. Furthermore, the following estimate holds 
    \begin{align}\label{Bound of well-posedness}
    	\|u\|_{C^{s}(\R^n)} \leq C \|f\|_{C^\infty_c(\Omega_e)},
    \end{align}
    for some constant $C>0$, independent of $u$ and $f$.
\end{prop}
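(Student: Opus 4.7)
The plan is a standard Banach fixed-point scheme on a small ball in $C^s(\R^n)$. Since $f\in C_c^\infty(\Omega_e)$ has support at positive distance from $\overline\Omega$, its extension by zero to $\R^n$ (still denoted $f$) has a pointwise well-defined fractional Laplacian on $\Omega$ satisfying $\|(-\Delta)^s f\|_{L^\infty(\Omega)} \leq C\|f\|_{C_c^\infty(\Omega_e)}$, directly from the integral representation \eqref{fractional Laplacian}. Writing $u = v+f$ and using $f\equiv 0$ on $\Omega$, the problem \eqref{Dirichlet problem for the well-posedness} is equivalent to finding $v$ with $v=0$ in $\Omega_e$ solving $(-\Delta)^s v = -(-\Delta)^s f - q v^m$ in $\Omega$. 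I would then define the iteration $\mathcal{T}: v\mapsto w$, where $w$ solves the linear exterior Dirichlet problem
\begin{equation*}
(-\Delta)^s w = -(-\Delta)^s f - q v^m \quad \text{in } \Omega, \qquad w = 0 \quad \text{in } \Omega_e.
\end{equation*}

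The core input is the sharp linear regularity for $(-\Delta)^s w = g$ in $\Omega$ with $w=0$ in $\Omega_e$: if $g\in L^\infty(\Omega)$ and $\partial\Omega\in C^{1,1}$, then $w\in C^s(\R^n)$ with $\|w\|_{C^s(\R^n)}\leq C\|g\|_{L^\infty(\Omega)}$. This is the Ros-Oton--Serra type bound collected in Appendix \ref{Appendix}. Applying it to $\mathcal{T}v$ and using $\|v\|_{L^\infty(\Omega)}\leq \|v\|_{C^s(\R^n)}$ yields the self-map estimate
\begin{equation*}
\|\mathcal{T}v\|_{C^s(\R^n)} \leq C\bigl(\|f\|_{C_c^\infty(\Omega_e)} + \|q\|_{L^\infty(\Omega)}\|v\|_{C^s(\R^n)}^m\bigr),
\end{equation*}
while the elementary pointwise bound $|v_1^m - v_2^m|\leq m\max(|v_1|,|v_2|)^{m-1}|v_1-v_2|$ gives the Lipschitz estimate
\begin{equation*}
\|\mathcal{T}v_1 - \mathcal{T}v_2\|_{C^s(\R^n)} \leq Cm\|q\|_{L^\infty(\Omega)}\delta^{m-1}\|v_1-v_2\|_{C^s(\R^n)}
\end{equation*}
for $v_1,v_2$ in the closed ball $B_\delta = \{v\in C^s(\R^n):\ v|_{\Omega_e}=0,\ \|v\|_{C^s(\R^n)}\leq \delta\}$.

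Choosing $\delta>0$ so that $2Cm\|q\|_{L^\infty(\Omega)}\delta^{m-1}<1$ (possible since $m\geq 2$) and then $\vareps>0$ with $C\vareps\leq \delta/2$, the map $\mathcal{T}$ sends $B_\delta$ into itself and is a strict contraction, so Banach's fixed-point theorem furnishes a unique $v\in B_\delta$ with $\mathcal{T}v=v$, and $u:=v+f$ solves \eqref{Dirichlet problem for the well-posedness}. Absorbing the term $Cm\|q\|_{L^\infty}\delta^{m-1}\|v\|_{C^s}$ into the left-hand side of the self-map bound evaluated at $v$ gives $\|v\|_{C^s(\R^n)}\leq C'\|f\|_{C_c^\infty(\Omega_e)}$, from which \eqref{Bound of well-posedness} follows by the triangle inequality. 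The main obstacle is really invoking the correct linear regularity: the sharp $C^s$-up-to-the-boundary bound with $L^\infty$ right-hand side and vanishing exterior data is what forces the $C^{1,1}$ hypothesis on $\partial\Omega$ and supplies the Banach space in which the contraction operates. A minor subtlety is the uniqueness claim, for which Banach's theorem only gives uniqueness inside $B_\delta$; any competing solution $\tilde u$ with $\|\tilde u-f\|_{C^s}$ sufficiently small would itself be a fixed point of $\mathcal{T}$ in $B_\delta$, so the smallness of $\vareps$ secures uniqueness within the intended small-data class.
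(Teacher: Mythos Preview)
Your proposal is correct and follows the same contraction-mapping scheme as the paper, resting on the identical core ingredient: the Ros--Oton $C^s$ estimate for the zero-exterior-data problem with $L^\infty$ right-hand side (this is cited in the paper directly as \cite[Proposition~1.1]{ros2014dirichlet}, not in Appendix~\ref{Appendix}). The only difference is cosmetic: the paper first introduces the $s$-harmonic extension $u_0$ of $f$ and writes $u=u_0+v$, so the nonlinearity becomes $-q(u_0+v)^m$, whereas you write $u=f+v$ and exploit $f|_\Omega\equiv 0$ to reduce it to $-qv^m$; your variant is slightly more economical, and your caveat on the scope of the uniqueness claim is more careful than the paper's own wording.
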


\begin{proof}
	Suppose the smallness condition that $\|f\|_{C^\infty_{c}(\Omega_e)}\leq \varepsilon$, for some small number $\vareps>0$, which will be determined later. Following the ideas of \cite[Theorem 2.1]{LL2020inverse}, one can extend $f$ to the whole space $\R^n$ by zero so that $\|f\|_{C^\infty_{c}(\R^n)}\leq \varepsilon$. Let $u_0$ be the solution of the linear Dirichlet problem 
	\begin{align}\label{eqn:u0}
	\begin{cases}
	(-\Delta)^s u_0 =0 & \hbox{ in } \Omega,\\
	u_0=f  &  \hbox{ in } \Omega_e.\\
	\end{cases} 
	\end{align}
	It is easy to see that \eqref{eqn:u0} is well-posed, i.e., there exists a unique solution $u_0 \in H^s(\R^n)$ of \eqref{eqn:u0}.
	
	Let us consider the function $w_0:=u_0-f$, then $w_0\in \wt H^s(\Omega)$ is the solution of 
	\begin{align}\label{eqn:w_0}
	(-\Delta)^s w_0 =-(-\Delta)^s f \quad \text{ in }\Omega. 
	\end{align}
	Notice that $(-\Delta)^s f$ is also bounded since $\norm{(-\Delta)^sf}_{L^\infty(\R^n)}\leq  C\norm{f}_{C^\infty_c(\R^n)}$ for some constant $C>0$ independent of $f$, by applying the optimal global H\"older regularity~\cite[Proposition~1.1]{ros2014dirichlet} to the equation \eqref{eqn:w_0} in the bounded $C^{1,1}$ domain $\Omega$, we have
	$$
	\|w_0\|_{C^{s }(\R^n)}\leq C  \norm{f}_{C^\infty_c(\R^n)}, 
	$$
	for some constant $C>0$ independent of $u_0$ and $f$. Via the triangle inequality, we have that 
	\begin{align}\label{estimate of u_0}
	\norm{u_0}_{C^s(\R^n)}\leq \|w_0\|_{C^{s }(\R^n)}+\|f\|_{C^{s }(\R^n)}\leq C\norm{f}_{C^\infty_c(\R^n)},
	\end{align}
	which shows that the solution $u_0$ of \eqref{eqn:u0} is $C^s(\overline{\Omega})$-continuous.

	If $u$ is the solution to \eqref{Dirichlet problem for the well-posedness}, then $v:=u-u_0$ satisfies
	\begin{align}\label{eqn:v}
	\begin{cases}
	(-\Delta)^s v = G(v) & \hbox{ in } \Omega,\\
	v=0  &  \hbox{ in } \Omega_e.\\
	\end{cases}
	\end{align}
	where we denote the operator $G$ by
	\begin{align}\label{definition G}
	G(\phi):=-q(u_0+ \phi)^m.
	\end{align}
	Consider the complete metric space 
	$$\mathcal{M}=\left\{\phi\in C^{s}(\R^n):\ \phi|_{\Omega_e}=0,\ \|\phi\|_{C^{s}(\R^n)}\leq \delta \right\},$$
	where $\delta>0$ will be determined later. 
	We first observe that 
	$G(\phi)=-q(u_0+\phi)^m \in  L^\infty(\Omega)$, when the functions $\phi\in\mathcal{M}$ and $q\in L^\infty(\Omega)$.
   The reason can be seen by  
	\begin{align}\label{pointwise estimate 1}
	 \norm{G(\phi)}_{L^\infty(\Omega)} \leq C\norm{q}_{L^\infty(\overline{\Omega})}\norm{u_0+\phi}^m_{L^\infty(\Omega)} <\infty,
	\end{align}
     which means $G(\phi)\in  L^\infty(\Omega)$.

	We next want to derive the following result. Let $g\in L^\infty(\Omega)$, then there exists a unique solution $\widetilde{v}\in H^s(\R^n)$ to the source problem 
	\begin{align}\label{zero boundary value problem}
	\begin{cases}
	(-\Delta)^s \widetilde{v} =g & \text{ in }\Omega, \\
	\widetilde{v}=0 & \text{ in }\Omega_e.
	\end{cases}
	\end{align}
	By \cite[Proposition~1.1]{ros2014dirichlet} again, we have 
	\begin{align}\label{Ros-Oton estimate}
	\|\widetilde{v}\|_{C^{s}(\R^n)}\leq C\|g\|_{L^\infty(\Omega)},
	\end{align}
	for some constant $C>0$ independent of $g$ and $\widetilde{v}$.

	Let us consider the solution operator of \eqref{zero boundary value problem} 
	$$
	\mathcal L_s^{-1}:  L^\infty(\Omega)\rightarrow  C^s(\R^n), \qquad g|_{\Omega}\mapsto  \widetilde{v}|_{\Omega},
	$$ 
	then we will show that the operator 
	$$\mathcal{F}:=\mathcal{L}_s^{-1}\circ G$$ 
	is a contraction map on $\mathcal{M}$.
	Assuming that $\mathcal{F}$ is contraction, by applying the contraction mapping principle on a complete metric space $\mathcal{M}$, there must exist a fixed point $v \in \mathcal{M}$  such that this fixed point $v$ is the solution of \eqref{eqn:v}.	
	To this end, we claim that $\F: \mathcal{M}\to \mathcal{M}$ and $\F$ is a contraction mapping.

	First, by \eqref{definition G} and \eqref{Ros-Oton estimate}, one has 
	\begin{align*}
	\begin{split}
	\norm{\F(\phi)}_{C^{s}(\R^n)}
	&\leq C\|G(\phi)\|_{L^\infty(\Omega)}\\
	&\leq C \norm{q | \phi+u_0|^m}_{L^\infty(\Omega)} \\
	&\leq C \|u_0 +\phi\|^m_{L^\infty(\Omega)}\\
	& \leq C\norm{u_0+\phi}^m_{C^s(\overline{\Omega})} \\
	& \leq C(\delta+\vareps)^m,
	\end{split}
	\end{align*}
	for any $\phi \in \mathcal{M}$, for some constant $C>0$ independent of $\phi$ and $u_0$.
	In addition, one can also obtain that  
	\[
	\norm{\F(\phi)}_{C^{s}(\R^n)} \leq C (\varepsilon+\delta)^m< \delta,
	\]
	where we have used $m\geq 2$ and $m\in \N$ such that $\F$ maps $\mathcal{M}$ into  $\mathcal{M}$  itself for $\vareps$ and $\delta$ small enough.

	Second, we want to show that $\F$ is a contraction mapping. By using straightforward computations, from \eqref{Ros-Oton estimate} and the mean value theorem, we have 
	\begin{align*}
	\begin{split}
	&\hskip.45cm \norm{\F(\phi_1)-\F(\phi_2)}_{C^{s}(\R^n)}  \\ 
	&=  \|(\mathcal{L}_s^{-1}\circ G)(\phi_1) -(\mathcal{L}_s^{-1}\circ G)(\phi_2)\|_{C^{s}(\R^n)}   \\
	&\leq  C \|G(\phi_1) -G(\phi_2) \|_{L^\infty(\Omega)}  \\
	&\leq  C \norm{(\phi_1 +u_0)^m-(\phi_2 +u_0)^m}_{L^\infty(\Omega)} \\
	& \leq C \norm{m |u_0+\theta\phi_1 +(1-\theta)\phi_2|^{m-1} |\phi_2-\phi_1|}_{L^\infty(\Omega)} \\
	& \leq C \norm{u_0+\theta \phi_1 +(1-\theta)\phi_2}^{m-1}_{L^\infty(\Omega)}\norm{\phi_2-\phi_1}_{C^s(\overline{\Omega})},
	\end{split} 
	\end{align*}
	for some $0<\theta<1$ and for some constant $C>0$ independent of $u_0$, $\phi_1$ and $\phi_2$.
	This infers that 
	$$
	\norm{\F(\phi_1)-\F(\phi_2)}_{C^{s}(\R^n)}\leq C(\varepsilon +\delta)^{m-1}\|\phi_2 -\phi_1\|_{C^{s}(\R^n)}
	$$
	for some constant $C_0>0$ independent of $\phi_1,\phi_2$, $\vareps$ and $\delta$.
	Now, by choosing $\varepsilon, \delta$ sufficiently small, we obtain that $C(\varepsilon +\delta)^{m-1}<1$ due to $m\geq 2$, which concludes that $\F$ is a contraction mapping on the complete metric space $\mathcal{M}$.

	In summary, there must exist a unique solution $v\in \mathcal{M}$ to the equation \eqref{eqn:v}, such that $v$ satisfies
	\begin{align*} 
	\|v\|_{C^{s}(\R^n)}\leq   & C \left(\|u_0\|^m_{C^s(\overline\Omega)} + \|v\|^m_{C^{s}(\overline\Omega)}\right) \\
	\leq  &C\LC  \vareps ^{m-1}\|f\|_{C^\infty_c(\Omega_e)} + \delta^{m-1}\|v\|_{C^{s}(\overline\Omega)}  \RC,
	\end{align*} 
	where we have used \eqref{estimate of u_0}.
	For $\delta$ sufficiently small and $m\geq 2$, $m\in \N$, we can then derive that 
	\begin{align}\label{estimate of v}
	\|v\|_{C^{s}(\R^n)}
	\leq  C \|f\|_{C^\infty_c(\Omega_e)}.
	\end{align} 
	Meanwhile, via \eqref{estimate of u_0} and \eqref{estimate of v}, the solution $u=u_0+v \in C^{s}(\R^n)$ to the Dirichlet problem \eqref{Dirichlet problem for the well-posedness} satisfies the desired estimate 
	\begin{align*} 
	\|u\|_{C^{s}(\R^n)}
	\leq  C  \|f\|_{C^\infty_c(\Omega_e)},
	\end{align*} 
	for some constant $C>0$ independent of $u$ and $f$. This completes the proof of the well-posedness.
\end{proof}

\begin{rmk}\label{rmk of H^s}
	Via Proposition \ref{Prop:well posedness}, as a matter of fact, one can that the solution $u$ of \eqref{Dirichlet problem for the well-posedness} is in $C^s_c(\R^n)$, when $u=f \in C^\infty_c(\Omega_e)$ in $\Omega_e$. Moreover, we can derive that $u\in H^s(\R^n)$, by the following straightforward computations.
	Consider the function $w=u-f$, where $u$ is the solution of \eqref{Dirichlet problem for the well-posedness} and $f\in C^\infty_c(\Omega_e)\subset C^\infty_c(\R^n)$. Then $w$ is the solution of 
	\begin{align}\label{equation w}
	\begin{cases}
	(-\Delta)^s w +w=-qu^m+u-f-(-\Delta)^s f &\text{ in }\Omega,\\
	w=0&\text{ in }\Omega_e.
	\end{cases}
	\end{align}
 By multiplying $w$ to \eqref{equation w} and integrating over $\R^n$, we can derive that $w\in H^s(\R^n)$, where we have utilzed the estimate \eqref{Bound of well-posedness}. Hence, $u = w+f \in H^s(\R^n)$. 
\end{rmk}
	We can define the DN map rigorously as follows.
	
	\begin{prop}[The DN map]\label{prop:DNmap} 
		Let $\Omega\subset \R^n$ be a bounded domain with $C^{1,1}$ boundary $\p \Omega$ for $n\geq 1$, $0<s<1$ and let $q\in L^\infty(\Omega)$.
		Define
		\begin{equation}\label{equivalent integration by parts}
		\left\langle \Lambda_q f,\varphi\right\rangle := \int_{\R^n}(-\Delta)^{s/2}u_f (-\Delta)^{s/2}\varphi\, dx + \int_{\Omega}qu_f^m\varphi \, dx,
		\end{equation}
		for $ f,\varphi\in C^\infty_c(\R^n)$.
	    The function $u_f\in C^s(\R^n)\cap H^s(\R^n)$ is the solution of \eqref{Dirichlet problem for the well-posedness} with the sufficiently small exterior data $f\in  C^\infty_c(\Omega_e)$. Then the DN map 
		\[
		\Lambda_q:H^{s}(\Omega_e)\to H^{s}(\Omega_e)^\ast 
		\]
		is bounded, and 
		\begin{equation}
		\left.\Lambda_q f\right|_{\Omega_e}=\left. (-\Delta)^s u_f\right|_{\Omega_e}.
		\end{equation}
	\end{prop}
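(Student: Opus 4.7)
The plan is to verify, in order: (a) the right-hand side of \eqref{equivalent integration by parts} is finite; (b) it descends to a well-defined element of $H^s(\Omega_e)^*$ (so that $\Lambda_q f$ makes sense); (c) the induced map $f\mapsto \Lambda_q f$ is bounded on $\mathcal{E}_\vareps$; and (d) the restriction identity holds. Proposition \ref{Prop:well posedness} together with Remark \ref{rmk of H^s} gives, for every $f\in \mathcal{E}_\vareps$, a unique solution $u_f\in H^s(\R^n)\cap C^s(\R^n)$ with $\|u_f\|_{H^s(\R^n)}+\|u_f\|_{C^s(\R^n)}\leq C\|f\|_{C^\infty_c(\Omega_e)}$. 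In particular $(-\Delta)^{s/2}u_f\in L^2(\R^n)$ and $qu_f^m\in L^\infty(\Omega)\subset L^2(\Omega)$, so writing
\[
\mathcal{B}_q(u_f,\varphi):=\int_{\R^n}(-\Delta)^{s/2}u_f\,(-\Delta)^{s/2}\varphi\,dx+\int_\Omega qu_f^m\varphi\,dx,
\]
Cauchy--Schwarz and Hölder immediately give
\[
|\mathcal{B}_q(u_f,\varphi)|\leq \|u_f\|_{H^s(\R^n)}\|\varphi\|_{H^s(\R^n)}+\|q\|_{L^\infty(\Omega)}\|u_f\|_{L^\infty(\R^n)}^{m-1}\|u_f\|_{L^2(\Omega)}\|\varphi\|_{L^2(\Omega)},
\]
so the right-hand side of \eqref{equivalent integration by parts} is finite and linear-continuous in $\varphi\in H^s(\R^n)$.

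To descend to a functional on $H^s(\Omega_e)$, I would invoke the weak formulation of \eqref{Dirichlet problem for the well-posedness}: by construction $u_f$ is a weak solution, which is exactly the statement that $\mathcal{B}_q(u_f,\psi)=0$ for every $\psi\in \widetilde{H}^s(\Omega)$. Consequently, any two extensions $\varphi_1,\varphi_2\in H^s(\R^n)$ of the same exterior trace satisfy $\varphi_1-\varphi_2\in \widetilde{H}^s(\Omega)$ and therefore $\mathcal{B}_q(u_f,\varphi_1)=\mathcal{B}_q(u_f,\varphi_2)$. Hence $\mathcal{B}_q(u_f,\cdot)$ depends only on $\varphi|_{\Omega_e}$ and induces a well-defined element $\Lambda_q f\in H^s(\Omega_e)^*$. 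Taking the infimum over extensions in the displayed estimate and combining with the well-posedness bound yields boundedness of $\Lambda_q$ on $\mathcal{E}_\vareps$; here boundedness of the nonlinear map $\Lambda_q$ is understood in the usual sense that bounded subsets of $\mathcal{E}_\vareps$ are mapped to bounded subsets of $H^s(\Omega_e)^*$.

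For the identity $\Lambda_q f|_{\Omega_e}=(-\Delta)^s u_f|_{\Omega_e}$, pick $\varphi\in C^\infty_c(\Omega_e)$; then $\varphi\equiv 0$ on $\Omega$ annihilates the nonlinear integral in $\mathcal{B}_q$. Plancherel applied to $u_f\in H^s(\R^n)$ and $(-\Delta)^s\varphi\in L^2(\R^n)$ gives
\[
\langle \Lambda_q f,\varphi\rangle=\int_{\R^n}(-\Delta)^{s/2}u_f\,(-\Delta)^{s/2}\varphi\,dx=\int_{\R^n}u_f\,(-\Delta)^s\varphi\,dx=\int_{\Omega_e}(-\Delta)^s u_f\cdot \varphi\,dx,
\]
the last equality following because $\mathrm{supp}(\varphi)\subset \Omega_e$. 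The main obstacle is step (b): verifying that the pairing genuinely descends to the quotient space identified with $H^s(\Omega_e)$. This rests on the fact that $\mathcal{B}_q(u_f,\cdot)$ annihilates $\widetilde{H}^s(\Omega)$, which is precisely the weak-solution property of $u_f$; this in turn needs $qu_f^m\in L^2(\Omega)$ to be an admissible source and a density argument extending the vanishing from $C^\infty_c(\Omega)$ to all of $\widetilde{H}^s(\Omega)$.
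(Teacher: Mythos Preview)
Your argument is correct and follows essentially the same route as the paper: both proofs use Parseval to rewrite $\int_{\R^n}(-\Delta)^{s/2}u_f(-\Delta)^{s/2}\varphi\,dx$ as $\int_{\R^n}(-\Delta)^s u_f\,\varphi\,dx$, then use the equation $(-\Delta)^s u_f=-qu_f^m$ in $\Omega$ to reduce the pairing to $\int_{\Omega_e}(-\Delta)^s u_f\,\varphi\,dx$. Your presentation is in fact more careful than the paper's: you explicitly verify that the functional annihilates $\widetilde{H}^s(\Omega)$ (hence descends to $H^s(\Omega_e)^*$) and you address boundedness quantitatively, whereas the paper dispatches both points with the phrase ``by the duality argument.''
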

	\begin{proof}
		Notice that \eqref{equivalent integration by parts} is not a bilinear form, since $qu^{m}$ is not a linear function. By using the Parseval identity, we have that 
		\begin{align*}
		&\int_{\R^n}(-\Delta)^{s/2}u_f (-\Delta)^{s/2}\varphi \, dx + \int_{\Omega}qu_f^{m}\varphi \, dx\\
		=& \int_{\R^n}(-\Delta)^{s}u_f \varphi \, dx + \int_{\Omega}qu_f^{m}\varphi \, dx \\
		=& \int_{\Omega_e}(-\Delta)^{s}u_f \varphi \, dx,
		\end{align*}
		where we have utilized that $u_f \in H^s(\R^n)$ is the solution of \eqref{Dirichlet problem for the well-posedness} as in Remark \ref{rmk of H^s} and 
		\[
		\int_{\R^n} (-\Delta)^{s}u_f \varphi\, dx = \int_{\Omega}(-\Delta)^{s}u_f \varphi \, dx +\int_{\Omega_e}(-\Delta)^{s}u_f \varphi\, dx.
		\]
		The preceding identity was justified in \cite{ghosh2016calder}.
		Since $\varphi\in C^\infty_c(\R^n)\subset H^s(\R^n)$ is arbitrary, by the duality argument, then we prove the proposition.
	\end{proof}

Notice that for the nonlinearities $q(x)u^{m}$, the higher order linearizations of the exterior DN map $\Lambda_q$ is particularly simple (see \cite[Section 2]{LLLS2019nonlinear} for the local case $s=1$). It is slightly different from the earlier work \cite{LLLS2019nonlinear}, which adapts multiple small parameters to do the higher order linearization. Instead, we use the ideas from \cite{LL2020inverse}, via a single $\eps$ parameter to do the higher order linearization for fractional semilinear equations.

Let $\eps>0$ be a sufficiently small number, and $g\in C^\infty_c(\Omega_e)$. 
The next proposition demonstrates that we may differentiate the fractional semilinear equation 
\begin{equation}\label{formal_calc_eq}
\begin{cases}
(-\Delta)^s u + q(x) u^m= 0 &\text{ in }\Omega,\\
 u = \eps g& \text{ in }\Omega_e,
\end{cases}
\end{equation}
formally in the $\eps$ variable to have equations corresponding to first linearization and $m$-th linearization that 
\begin{align}\label{s-harmonic}
	\begin{cases}
	(-\Delta )^s v_{g}=0 & \text{ in }\Omega,\\
	v_g =g & \text{ in }\Omega_e,
	\end{cases}
\end{align}
and  
\begin{align}\label{equ of w}
	\begin{cases}
	  (-\Delta)^s w=-(m!)q(v_{g})^{m+1} & \text{ in }\Omega,\\
	  w=0 & \text{ in }\Omega_e,
	\end{cases}
\end{align}
respectively. We call the solution $v_g$ of the fractional Laplacian equation \eqref{s-harmonic} to be $s$-harmonic in the rest of paper.

The DN map of the solution $w$ of \eqref{equ of w} is the $m$-th linearization of the DN map of~\eqref{formal_calc_eq}. Let 
\[
(D^k T)_x(y_1,\ldots,y_k) 
\]
denote the $k$-th derivative at $x$ of a mapping $T$ between Banach spaces, which can be regarded as a symmetric $k$-linear form acting on $(y_1,\ldots,y_k)$. We refer to  \cite[Section 1.1]{hormander1983analysis}, where the notation $T^{(k)}(x; y_1, \ldots, y_k)$ is used instead of $(D^k T)_x(y_1,\ldots,y_k)$.

For $f \in C^\infty_c(\Omega_e)$ with $\norm{f}_{C^\infty_c(\Omega_e)}$ to be sufficiently small.
By using the notation of $s$-harmonic function $v_g$ given by \eqref{s-harmonic}, we have the following result.

\begin{prop}\label{Prop: derivs_and_integral_formula}
    Let $\Omega\subset \R^n$ be a bounded domain with $C^{1,1}$ boundary $\p \Omega$ for $n\geq 1$, $0<s<1$ and let $q\in L^\infty(\Omega)$. Let $\Lambda_q$ be the DN map for the fractional semilinear elliptic equation 
	\begin{equation}\label{Dir_prblm}
	(-\Delta)^su + qu^m= 0 \text{ in }\Omega,
	\end{equation}
	where $m\in \N \text{ and } m\geq 2$.
	The first linearization $(D\Lambda_q)_0$ of $\Lambda_q$ at $g=0$ is the DN map of the fractional Laplacian \eqref{s-harmonic} such that 
	\[
	(D\Lambda_q)_0: H^s(\Omega_e)\to H^s(\Omega_e)^\ast, \ \ g \mapsto \left.(-\Delta)^s v_g \right|_{\Omega_e}.
	\]
	The higher order linearizations $(D^j \Lambda(q))_0$ are identically zero for $2 \leq j \leq m-1$. 
	
	The $m$-th linearization $(D^m \Lambda_q)_0$ of $\Lambda_q$ at $g=0$ can be characterized by 
	\begin{equation} \label{dm_lambdaq_identity}
	\int_{\Omega_e} (D^m \Lambda_q)_0(g,\ldots,g)h  \,dx = (m!) \int_{\Omega}  q (v_g)^{m}v_h\,dx,
	\end{equation}
     where $v_{g}$ and $v_h$ are $s$-harmonic in $\Omega$ with the exterior value $v_g=g$ and $v_h=h$ in $\Omega_e$, respectively.
\end{prop}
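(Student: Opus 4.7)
The plan is to parametrize solutions by $\epsilon$, establish smooth dependence on $\epsilon$, and extract each linearization from the resulting Taylor expansion. First, I would promote the well-posedness of Proposition \ref{Prop:well posedness} to smooth dependence: writing $u_\epsilon = \epsilon g + v$ with $g$ extended by zero to $\R^n$, the equation becomes $F(\epsilon, v) := (-\Delta)^s v + \epsilon(-\Delta)^s g + q v^m = 0$ in $\Omega$ for $v \in \wt H^s(\Omega) \cap C^s_0(\R^n)$. Since $F(0,0) = 0$ and (using $m \geq 2$) $\partial_v F_{(0,0)} = (-\Delta)^s|_{\wt H^s(\Omega)}$ is invertible by the standard solvability of the fractional Dirichlet problem, the implicit function theorem between suitable Banach spaces (e.g.\ $L^\infty(\Omega)$ on the right) yields a smooth map $\epsilon \mapsto v(\epsilon)$, hence $\epsilon \mapsto u_\epsilon$ is smooth into $C^s(\R^n) \cap H^s(\R^n)$. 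This smoothness step is the main technical burden; the rest is bookkeeping.

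With smoothness in hand, set $a_k := \partial_\epsilon^k u_\epsilon|_{\epsilon=0}$ and differentiate $(-\Delta)^s u_\epsilon + q u_\epsilon^m = 0$ in $\epsilon$. Because $u_\epsilon|_{\epsilon=0} = 0$ (uniqueness with zero exterior data), the general Leibniz rule
\[
\partial_\epsilon^k (u_\epsilon^m)\big|_{\epsilon=0} = \sum_{k_1+\cdots+k_m = k} \binom{k}{k_1,\ldots,k_m} \prod_{j=1}^m a_{k_j}
\]
kills every term with some $k_j = 0$; having all $k_j \geq 1$ forces $k \geq m$. Consequently $a_1 = v_g$ (it solves $(-\Delta)^s a_1 = 0$ with exterior data $g$); $a_k = 0$ for $2 \leq k \leq m-1$ by uniqueness of the fractional Dirichlet problem; and for $k = m$ the only contributing multi-index is $(1,\ldots,1)$ with weight $m!$, so $w := a_m$ solves $(-\Delta)^s w = -m!\, q v_g^m$ in $\Omega$ with $w = 0$ in $\Omega_e$.

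To extract the DN-map identities I would apply the variational formula \eqref{equivalent integration by parts} with test function $\tilde h$, the extension of $h \in C^\infty_c(\Omega_e)$ by zero. Because $\tilde h|_\Omega = 0$ the nonlinear term drops, so
\[
\langle \Lambda_q(\epsilon g), h \rangle = B(u_\epsilon, \tilde h), \qquad B(u,v) := \int_{\R^n}(-\Delta)^{s/2}u\,(-\Delta)^{s/2}v\,dx,
\]
and differentiating $k$ times at $\epsilon = 0$ produces $B(a_k, \tilde h)$. For $k = 1$, Parseval together with $(-\Delta)^s v_g = 0$ in $\Omega$ gives $B(v_g, \tilde h) = \int_{\Omega_e}((-\Delta)^s v_g)\,h\,dx$, which is precisely the DN map of the fractional Laplacian applied to $g$. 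For $2 \leq k \leq m-1$, $a_k = 0$ forces $(D^k \Lambda_q)_0 = 0$. For $k = m$, decompose $\tilde h = v_h + (\tilde h - v_h)$ with $v_h$ the $s$-harmonic extension of $h$: the piece $B(w, v_h)$ vanishes since $w$ is supported in $\overline{\Omega}$ and $(-\Delta)^s v_h = 0$ there, while $\tilde h - v_h \in \wt H^s(\Omega)$ and the weak equation for $w$ yield $B(w, \tilde h - v_h) = \int_\Omega (-m!\, q v_g^m)(-v_h)\,dx = m!\int_\Omega q v_g^m v_h\,dx$, which is \eqref{dm_lambdaq_identity}.
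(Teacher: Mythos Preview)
Your proposal is correct and follows essentially the same approach as the paper: parametrize by $\epsilon$, differentiate the equation to identify $a_k$ for $1\le k\le m$, and then read off the DN-map linearizations via an integration-by-parts/Parseval argument. The only cosmetic differences are that you justify smooth $\epsilon$-dependence through an explicit implicit-function-theorem sketch (the paper simply asserts $C^\infty$ Fr\'echet differentiability from the well-posedness), and in the final step you split the test function as $\tilde h = v_h + (\tilde h - v_h)$ whereas the paper computes $\int_{\Omega_e}((-\Delta)^s w)h\,dx$ directly and then kills the global term via Parseval; the two computations are equivalent.
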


\begin{rmk} We point out:
	\begin{itemize}
		\item[(a)] Even though the original DN map $\Lambda_q$ depends on $q$ \emph{non-linearly}, but it is worth emphasizing that the integral identity \eqref{dm_lambdaq_identity} implies that the $m$-th order derivative of $\Lambda_q$ depends \emph{linearly} on $q$.
		
		\item[(b)] Proposition \ref{Prop: derivs_and_integral_formula} plays an essential role to prove our main results of this article.
	\end{itemize}
\end{rmk}

\begin{proof}[Proof of Proposition \ref{Prop: derivs_and_integral_formula}]
	Via Proposition \ref{Prop:well posedness}, and thus the DN map $\Lambda_q (f)= (-\Delta)^s (Sf)|_{\Omega_e}$ is well defined for sufficiently small exterior data $f$, where $S: f \mapsto u_f$ is the solution operator for the equation~\eqref{Dir_prblm}. In order to compute the derivatives of $\Lambda_q$ at $0$, it suffices to consider the derivatives of $S$. Furthermore, by using Proposition \ref{Prop:well posedness}, the maps 
	\begin{align*}
	S: \mathcal{E}_\delta \to H^s(\R^n), \qquad & f\mapsto u_f,\\
	\Lambda_q: \mathcal{E}_\delta \to H^s(\Omega_e)^\ast , \quad & f\mapsto \left. (-\Delta)^s u_f  \right|_{\Omega_e}
	\end{align*}
	are $C^\infty$ Fr\'echet differentiable mappings, where $\mathcal{E}_\delta$ is the set defined by \eqref{small boundary} to denote the set of small exterior data.
	
	Let us write $f =f(x;\eps):= \eps g(x)\in C^\infty_c(\Omega_e)$, then the function $u_{\eps g}=S(\eps g) \in C^{s}(\overline{\Omega})$ depends smoothly on the small parameter $\eps$. 	
	By applying $\left.\p_{\eps}^m \right|_{\eps=0}$ to the Taylor's formula for $C^\infty$ Fr\'echet differentiable mappings (see e.g.\ \cite[Equation 1.1.7]{hormander1983analysis})
	\[
	S(f) = \sum_{j=0}^k \frac{(D^j S)_0(f, \ldots, f)}{j!} + \int_0^1 \frac{(D^{k+1} S)_{tf}(f, \ldots, f)}{k!} (1-t)^k \,dt
	\]
	implies that $(D^k S)_0$ may be computed using the formula 
	\[
	(D^k S)_0(f, \ldots, f) = \left.  \p_{\eps}^m u_f\right|_{\eps=0}.
	\]
	Moreover, since $u_f$ is smooth in the $\eps$ variables and the fractional Laplacian $(-\Delta)^s$ is linear, one may differentiate the equation 
	\begin{equation} \label{mth_power_nonlinearity}
	(-\Delta)^s u_f + q u_f^m = 0 \text{ in }\Omega, \qquad u_f = f \text{ in }\Omega_e
	\end{equation}
	with respect to the $\eps$ variable.
	
	For the first linearization $k = 1$ with $u = u_{\eps g}$, we have $u_0 = 0$ in $\R^n$ and $m \geq 2$, the derivative of \eqref{mth_power_nonlinearity} in $\eps$ evaluated at $\eps= 0$ satisfies 
	\[
	(-\Delta)^s \left(\left.\p_{\eps}\right|_{\eps = 0} u_f\right) = 0 \text{ in }\Omega, \qquad \left.\p_{\eps}\right|_{\eps = 0} u_f= g \text{ in }\Omega_e.
	\]
	Thus the first linearization of the map $S$ at $f=0$ ($f=\eps g$ with $\eps=0$) is 
	\[
	(DS)_0(g) = \left.\p_{\eps}\right|_{\eps = 0} u_{\eps g}= v_{g}, \quad  \text{ for }g\in C^\infty_c (\Omega_e),
	\]
	where $v_{g}$ is $s$-harmonic in $\Omega$ with $v_g=g$ in $\Omega_e$.
	
	For $2 \leq k \leq m-1$, applying the $k$-th order derivatives $\left.\p_{\eps}^k \right|_{\eps =0}$ to \eqref{mth_power_nonlinearity} gives that 
	\[
	(-\Delta)^s \left(\left.\p_{\eps}^k \right|_{\eps =0}u_f\right)= 0 \text{ in }\Omega, \qquad \left.\p_{\eps}^k \right|_{\eps =0}u_f= 0 \text{ in }\Omega_e,
	\]
	since $\p_\eps ^k \left(q(x)u_f^{m}\right) $ is a sum of terms containing positive powers of the solution $u_f$, which are equal to zero whenever $\eps = 0$. The uniqueness of solutions for the fractional Laplace equation implies that 
	\[
	\underbrace{(D^k S)_0}_{k\text{-linear form}}\underbrace{(g,\ldots ,g)}_{k\text{-vector}} = 0, \quad  \text{ for }2 \leq k \leq m-1.
	\]
	More precisely, we have used the fact that any $s$-harmonic function with $0$ exterior data is zero in $\R^n$.
	
	When $k=m$, the only nonzero term in the expansion of $\left. \p_{\eps} ^m \right|_{\eps=0} \left(q(x) u_f^{m} \right)$ does not contain second or higher order derivatives of $u_f$ with respect to $\eps$. The nonzero term after inserting $\eps =0$ is 
	$$
	q(x) (m!) \left(\left.\p_{\eps}\right|_{\eps=0} u_f \right)^m=q(x)(m!)(v_g)^m.
	$$
    Hence, the function 
	\[
	%u^{(m)}  
	w:= (D^m S)_0(g, \ldots, g) = \left.\p_{\eps}^m\right|_{\eps=0}u_f \quad \text{ in }\R^n
	\]
	solves 
	\begin{equation} \label{laplace_um_equation}
	(-\Delta)^s w +q(x) (m!) (v_g)^m=0 \quad \text{ in }\Omega,
	\end{equation}
	with zero exterior data in $\Omega_e$.
	
	By linearity we have  
	\[
	\left. (D^k \Lambda_q)_0 \right|_{\Omega_e} = \left. (-\Delta)^s (D^k S)_0\right|_{\Omega_e}.
	\]
	The claims for derivatives of DN map $(D^k \Lambda_q)_0$ when $1 \leq k \leq m-1$ follow immediately. For $k = m$ we observe that  $(D^m \Lambda_q)_0(g,\ldots,g) = \left. (-\Delta)^s  w\right|_{\Omega_e}$ satisfies 
	\begin{align}\label{some integral id in sec 2}
		\begin{split}
			\int_{\Omega_e} ((-\Delta)^s w)h\,dx =& \int_{\R^n} (-\Delta)^s w v_h  \,dx+m!\int_{\Omega}q v^{m}v_h\, dx\\
		=&m!\int_{\Omega}q v^{m}v_h\, dx,
		\end{split}
	\end{align}
	where $v_h$ is $s$-harmonic in $\Omega$ with $v_{h}=h$ in $\Omega_e$. 
	Finally, we have used that $ \int_{\R^n} (-\Delta)^s w h  \,dx=0$ in \eqref{some integral id in sec 2} due to the Parseval's identity that 
	\[
	 \int_{\R^n} (-\Delta)^s w h  \,dx=\int_{\R^n}w (-\Delta)^s h \, dx =\underbrace{\int_{\Omega}w (-\Delta)^s h \, dx}_{(-\Delta)^s h =0 \text{ in }\Omega} +\underbrace{\int_{\Omega_e} w (-\Delta)^s h \, dx}_{w=0 \text{ in }\Omega_e} =0.
	\]
	Thus, the proposition follows by using \eqref{laplace_um_equation}.
\end{proof}

For the sake of convenience, in the rest of this paper, let us utilize the pairing notation 
\begin{align*}
		\langle \underbrace{ (D^m \Lambda_q)_0}_{m\text{-form}}\underbrace{(g,\ldots ,g )}_{m\text{-vectors}} ,h \rangle =\int_{\Omega_e} (D^m \Lambda_q)_0(g,\ldots,g)h  \,dx,
\end{align*}
where $(D^m \Lambda_q)_0:H^s(\Omega_e)^m \to H^s(\Omega_e)^\ast $ is regarded as an $m$-form acting on an $m$-vector valued function $(g,\ldots ,g )$.

Let $\Omega\subset \R^n$, $n\geq 1$ be a bounded domain with $C^{1,1}$ boundary $\p\Omega$, and $0<s<1$.
Let $q_1, q_2 \in L^\infty(\Omega)$, 
and $\Lambda_{q_j}$ be the DN maps of the semilinear elliptic equations $(-\Delta)^s u + q_j u^{m}=0$ in $\Omega$ for $j=1,2$. 
As we mentioned in Proposition \ref{Prop: derivs_and_integral_formula}, there are no information of $(D^k\Lambda_{q_j})_0$ for any $k=2,\cdots,m-1$. Let us look at the case $k=0$ and $k=1$. For $k=0$, we have that 
$$
(D^0 \Lambda_{q_j})_0 = \Lambda_{q_j}(\eps g)|_{\eps=0}=0, \quad \text{ for }j=1,2,
$$
due to the well-posedness of \eqref{Dirichlet problem for the well-posedness}. Meanwhile, for the case $k=1$, the map $(D^1 \Lambda_{q_j})_0$ denotes the DN map of the fractional Laplacian equation \eqref{s-harmonic}, for $j=1,2$, which has no unknown coefficients in the equation \eqref{s-harmonic}. Hence, we must have 
$$
(D^1 \Lambda_{q_1})_0=(D^1 \Lambda_{q_2})_0.
$$
Therefore, in order to understand the relations of the DN maps $\Lambda_{q_j}$, one can obtain the information of the $m$-th order derivative $(D^m\Lambda_{q_j})_0$ of the DN map $\Lambda_{q_j}$, for $j=1,2$.

\section{Monotonicity and localized potentials}\label{Sec 3}
In this section, we show monotonicity relations between potentials $q$ and their corresponding DN maps, and we demonstrate how to control the energy terms in the monotonicity formulas with the localized potentials of the fractional Laplacian.

\subsection{Monotonicity relations} 
We study the monotonicity relations between the $m$-th order derivative of DN maps and the potentials via the following integral identity.
Let us define the energy inequalities of the $m$-th order derivative of DN maps:

\begin{defi}\label{def of monotonicity}
	Consider $\Omega\subset \R^n$, $n\geq 1$ to be a bounded domain with $C^{1,1}$ boundary $\p\Omega$, and $0<s<1$.
	Let $q_1, q_2 \in L^\infty(\Omega)$, $m\geq 2$ and $m\in \N$. 
	Let $\Lambda_{q_j}$ be the DN maps of the semilinear elliptic equations $(-\Delta)^s u + q_j u^{m}=0$ in $\Omega$ for $j=1,2$. Then the inequality 
	$(D^m\Lambda_{q_1})_0\geq (D^m \Lambda_{q_2})_0$ can be defined as follows:
	\begin{itemize}
		\item[(a)] When $m$ is odd, $(D^m \Lambda_{q_1})_0\geq (D^m\Lambda_{q_2})_0$ is denoted by 
		\begin{align}\label{def mono 1}
			\langle [(D^m \Lambda_{q_1})_0  - (D^m(\Lambda_{q_2}))_0] (g,\ldots ,g), g  \rangle  \geq 0,
		\end{align}
		for any $g\in C^\infty_c(\Omega_e)$.
		\item[(b)] When $m$ is even, $(D^m\Lambda_{q_1})_0\geq (D^m \Lambda_{q_2})_0$ is denoted by 
		\begin{align}\label{def mono 2}
			\langle [(D^m\Lambda_{q_1})_0 -(D^m \Lambda_{q_2})_0] (g,\ldots ,g), h  \rangle  \geq 0,
		\end{align}
		for any $g,h\in C^\infty_c(\Omega_e)$ with $h\geq 0$.
	\end{itemize}
\end{defi}

We next demonstrate the monotonicity relations between potentials and the $m$-th order derivatives of the DN map. Due the particular structure of the power type nonlinearities, the integral identity will imply the monotonicity formulas directly, which is a more straightforward result than its linear counterpart.

\begin{thm}[Monotonicity relations]\label{Thm monotonicity}
	Consider $\Omega\subset \R^n$, $n\geq 1$ to be a bounded domain with $C^{1,1}$ boundary $\p\Omega$, and $0<s<1$.
	Let $q_1, q_2 \in L^\infty(\Omega)$, $m\geq 2$ and $m\in \N$. 
	Let $\Lambda_{q_j}$ be the DN maps of the semilinear elliptic equations $(-\Delta)^s u + q_j u^{m}=0$ in $\Omega$ for $j=1,2$.
	Then 
	\begin{itemize}
		\item[(a)] We have the integral identity 
		\begin{align}\label{integral identity}
		\langle [(D^m\Lambda_{q_1})_0 -(D^m \Lambda_{q_2})_0](g,\ldots ,g), h  \rangle = (m!)\int_{\Omega }(q_1-q_2)(v_g)^{m}v_h\, dx
		\end{align}
		where $v_g$ and $v_h$ are $s$-harmonic in $\Omega$ with $v_g=g$ and $v_h=h$ in $\Omega_e$, respectively, for $g,h\in C^\infty_c(\Omega_e)$.
		
		\item[(b)] We have the monotonicity relation 
		\begin{align*}
			q_1 \geq q_2 \text{ in }\Omega \quad \text{ implies that }\quad  (D^m\Lambda_{q_1})_0\geq (D^m \Lambda_{q_2})_0.
		\end{align*}
	\end{itemize} 
	\end{thm}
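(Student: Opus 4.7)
\medskip

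\noindent\textbf{Proof proposal.}

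Part (a) is essentially an immediate consequence of the characterization of the $m$-th linearization provided by Proposition \ref{Prop: derivs_and_integral_formula}. The plan is to apply the identity \eqref{dm_lambdaq_identity} separately to the potentials $q_1$ and $q_2$, noting that in both cases the first linearization $v_g$ (respectively $v_h$) is the \emph{same} $s$-harmonic function, since the linear equation \eqref{s-harmonic} does not see the potential. Subtracting the two identities then yields
\begin{align*}
\langle [(D^m \Lambda_{q_1})_0 - (D^m \Lambda_{q_2})_0](g,\ldots,g), h\rangle
= (m!)\int_{\Omega}(q_1 - q_2)(v_g)^m v_h \, dx,
\end{align*}
which is precisely \eqref{integral identity}. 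There is nothing subtle here beyond bookkeeping of Proposition \ref{Prop: derivs_and_integral_formula}.

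Part (b) then reduces to examining the sign of the right-hand side of \eqref{integral identity} under the assumption $q_1 - q_2 \geq 0$ a.e.\ in $\Omega$, and the argument splits according to the parity of $m$ in Definition \ref{def of monotonicity}. When $m$ is odd, the monotonicity inequality is tested against the pair $(g, g)$, so I would set $h = g$ in part (a); then the integrand becomes $(q_1 - q_2)(v_g)^{m+1}$, and since $m+1$ is even the factor $(v_g)^{m+1}$ is pointwise nonnegative, giving \eqref{def mono 1} at once. When $m$ is even, the testing pair in Definition \ref{def of monotonicity}(b) is $(g, h)$ with $h \geq 0$; here $(v_g)^m \geq 0$ pointwise because $m$ is even, so the only thing to check is that $v_h \geq 0$ in $\Omega$.

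The only non-trivial ingredient, and therefore the \emph{main obstacle}, is this last point: I need the weak maximum principle for the fractional Laplacian, which guarantees that if $h \in C^\infty_c(\Omega_e)$ satisfies $h \geq 0$ in $\Omega_e$ and $v_h$ is the $s$-harmonic extension solving \eqref{s-harmonic} with exterior datum $h$, then $v_h \geq 0$ on all of $\R^n$. This is exactly the content of the maximum principle the author defers to Appendix \ref{Appendix max}, so I would simply invoke it. Combining this with the two sign observations above yields $(D^m\Lambda_{q_1})_0 \geq (D^m\Lambda_{q_2})_0$ in the sense of \eqref{def mono 1}--\eqref{def mono 2}, completing the proof. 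No localized potentials or Runge approximation are required for this direction; those tools are reserved for the converse implication in Theorem \ref{Thm: If-and-only-if monotonicity}.
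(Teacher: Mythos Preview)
Your proposal is correct and follows essentially the same approach as the paper: part~(a) is obtained by applying Proposition~\ref{Prop: derivs_and_integral_formula} to $q_1$ and $q_2$ and subtracting, and part~(b) splits into the odd case (take $h=g$ so that $(v_g)^{m+1}\geq 0$) and the even case (use $(v_g)^m\geq 0$ together with the maximum principle from Appendix~\ref{Appendix max} to get $v_h\geq 0$). The paper's proof is identical in structure and content.
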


\begin{proof}
	For (a), the proof is a simple application of Proposition \ref{Prop: derivs_and_integral_formula}. Via \eqref{dm_lambdaq_identity}, one has 
	\begin{align*}
	\langle (D^m \Lambda_{q_j})_0(g,\ldots ,g), h  \rangle =	\int_{\Omega_e} (D^m \Lambda_{q_j})_0(g,\ldots,g)h  \,dx = (m!) \int_{\Omega}  q_j (v_g)^{m}v_h\,dx,
	\end{align*}
	for $j=1,2$. By subtracting the preceding identity with $j=1$ and $j=2$, we have the desired identity \eqref{integral identity}. 
	 
	For (b),  we first show the case when $m$ is odd. Let us take $h=g\in C^\infty_c(\Omega_e)$, then the uniqueness of the fractional Laplacian implies that $v_g=v_h$ in $\Omega$.	
	By plugging $q_1-q_2\geq 0$ in $\Omega$ into \eqref{integral identity}, we must have 
	\[
	\langle [(D^m\Lambda_{q_1})_0 -(D^m \Lambda_{q_2})_0](g,\ldots ,g), g  \rangle =(m!)\int_{\Omega }(q_1-q_2)(v_g)^{m+1}\, dx \geq 0,
	\]
	where we have used that $m$ is odd so that $(v_g)^{m+1}=|v_g|^{m+1}\geq 0$ in $\Omega$. This satisfies \eqref{def mono 1} so that $(D^m\Lambda_{q_1})_0\geq (D^m \Lambda_{q_2})_0$.
	
	When $m$ is even, we take $h\in C^\infty_c(\Omega_e)$ with $h\geq 0$. Note that $v_h$ is $s$-harmonic in $\Omega$ with $v_h=h\geq 0$ in $\Omega_e$, then the maximum principle for the fractional Laplacian yields that $v_h\geq 0$ in $\Omega$ (for example, see \cite{ros2015nonlocal}). By plugging $q_1-q_2\geq 0$ in $\Omega$ into \eqref{integral identity}, we must have 
	\[
	\langle [(D^m\Lambda_{q_1})_0 -(D^m \Lambda_{q_2})_0](g,\ldots ,g), h  \rangle =(m!)\int_{\Omega }(q_1-q_2)(v_g)^{m}v_h\, dx \geq 0,
	\]
	where we have used that $m$ is even so that $(v_g)^{m}=|v_g|^{m}\geq 0$ in $\Omega$ and $v_h\geq 0$ in $\Omega$. This satisfies \eqref{def mono 2} so that $(D^m\Lambda_{q_1})_0\geq (D^m \Lambda_{q_2})_0$. This completes the proof.
\end{proof}

\begin{rmk}
	From Theorem \ref{Thm monotonicity}, we have:
	\begin{itemize}
		\item[(a)] In the proof of part (b) of Theorem \ref{Thm monotonicity}, one can see that why we need to choose different $s$-harmonic function $v_h$ in \eqref{integral identity}  so that \eqref{def mono 1} and \eqref{def mono 2} have correct sign conditions. In particular, when $h\leq 0$ in $\Omega_e$, the maximum principle (see Appendix \ref{Appendix max}) yields that $(D^m\Lambda_{q_1})_0 \leq (D^m\Lambda_{q_2})_0$, provided that $q_1 \geq q_2$ in $\Omega$. However, for general $h\in C^\infty_c(\Omega_e)$, we do not know the sign condition of the $s$-harmonic function $v_h$ in $\Omega$ so that we cannot have the monotonicity relation as in Theorem \ref{Thm monotonicity} (b).
		
		\item[(b)] In particular, when $m=1$, i.e., for the (linear) fractional Schr\"odinger equation, one can adapt \eqref{def mono 2} as the monotonicity assumption. One can see that if we do the "linearization" to the fractional Schr\"odinger equation, then the "linearized" equation is also the same fractional Schr\"odinger equation. The monotonicity relations were derived in the works \cite{harrach2017nonlocal-monotonicity,harrach2020monotonicity}.
	
	    \item[(c)] In the semilinear case, the monotonicity relation between potentials and $m$-th order derivative of DN maps is equivalent to the integral identity \eqref{integral identity}, which makes the monotonicity tests be easier for the fractional semilinear elliptic equation than their linear counterparts.
	\end{itemize}
\end{rmk}

\subsection{Localized potentials for the fractional Laplacian}
We demonstrate the existence of localized potentials for $s$-harmonic functions. 
For the fractional Laplacian, the existence of localized potentials is a simple consequence of the strong uniqueness  and Runge approximation, which was demonstrated by \cite{ghosh2016calder}.
In this work, we use slightly different settings. For the sake of completeness, let us state the the strong uniqueness, Runge approximation, and localized potentials as follows.

\begin{prop}[Strong uniqueness]\label{Prop: strong uniqueness}
	For $n\geq1$, $0<s<1$, let $v\in L^p(\mathbb{R}^{n})$ for
	some $1<p < 2$ satisfy both $v$ and $(-\Delta)^{s}v$ vanish
	in the same arbitrary non-empty open set in $\mathbb R^n$, then $v\equiv0$ in $\mathbb{R}^{n}$. 
\end{prop}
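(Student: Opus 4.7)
The plan is to reduce this nonlocal strong unique continuation property to a local one via the Caffarelli–Silvestre extension, and then invoke a known strong unique continuation result for the resulting degenerate elliptic operator on the half-space $\mathbb{R}^{n+1}_+$.

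First, I would form the extension $V$ of $v$ through the fractional Poisson kernel. For $v\in L^p(\mathbb{R}^n)$ with $1<p<2$, set
\begin{equation*}
V(x,y)=(P_y\ast v)(x),\qquad P_y(x)=c_{n,s}\,\frac{y^{2s}}{(|x|^2+y^2)^{(n+2s)/2}},\quad y>0.
\end{equation*}
Since $P_y$ is an approximate identity on $L^p$, this is well-defined, $V\in C^\infty(\mathbb{R}^{n+1}_+)$, and $V$ satisfies the degenerate elliptic equation $\operatorname{div}(y^{1-2s}\nabla V)=0$ in $\mathbb{R}^{n+1}_+$ with Dirichlet trace $v$ and weighted Neumann trace $-\lim_{y\to 0^+}y^{1-2s}\partial_y V=c_s(-\Delta)^sv$ (in the distributional sense appropriate to the $L^p$ setting).

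Next, let $\omega\subset\mathbb{R}^n$ be the non-empty open set on which both $v$ and $(-\Delta)^sv$ vanish. Then both the Dirichlet trace and the weighted Neumann trace of $V$ vanish on $\omega\times\{0\}$. I would then perform an even reflection across $y=0$, defining $\widetilde V(x,y):=V(x,|y|)$ on $\omega\times(-\delta,\delta)$ for some $\delta>0$. The vanishing of both traces on $\omega\times\{0\}$ ensures that $\widetilde V$ is a weak solution of the symmetric degenerate elliptic equation
\begin{equation*}
\operatorname{div}\bigl(|y|^{1-2s}\nabla\widetilde V\bigr)=0
\end{equation*}
in a genuine two-sided neighborhood of $\omega\times\{0\}$ in $\mathbb{R}^{n+1}$. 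Moreover, $\widetilde V\equiv 0$ on $\omega\times\{0\}$, hence it vanishes to infinite order on a set of positive measure in its domain.

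Finally, I would appeal to the strong unique continuation property available for this class of degenerate elliptic equations (for instance via the Carleman estimates developed by R\"uland) to deduce that $\widetilde V$ vanishes identically on its connected domain. Since $V$ is real analytic in the interior of $\mathbb{R}^{n+1}_+$, this propagates to $V\equiv 0$ in $\mathbb{R}^{n+1}_+$, and taking Dirichlet traces yields $v\equiv 0$ in $\mathbb{R}^n$. The main obstacle is technical rather than conceptual: the quoted Carleman estimates are usually formulated for extensions of $H^r$ functions, so one has to check that the weaker hypothesis $v\in L^p$ for $1<p<2$ still produces an extension $V$ (and its reflection $\widetilde V$) lying in the correct weighted local Sobolev space near $\omega\times\{0\}$, so that the unique continuation machinery applies. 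Interior smoothness of $V$ in $\mathbb{R}^{n+1}_+$ together with the improved boundary regularity provided by the simultaneous vanishing of both traces should suffice.
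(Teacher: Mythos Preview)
The paper does not supply its own proof here: it records that the $H^{a}(\mathbb{R}^n)$ version is due to Ghosh--Salo--Uhlmann and then cites Covi--M\"onkk\"onen--Railo \cite[Corollary~4.5]{CMR2020unique} for the stated $L^p$ range $1<p<2$. Your Caffarelli--Silvestre route is precisely the Ghosh--Salo--Uhlmann/R\"uland argument, so the strategy matches one of the cited sources; the Covi--M\"onkk\"onen--Railo extension to $L^p$ proceeds somewhat differently, but your approach is equally legitimate once the details below are filled in.

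On the $L^p$ versus $H^r$ obstacle you flag: this can be resolved locally rather than globally. Because $v\equiv 0$ on $\omega$, for $x$ in any $\omega'\Subset\omega$ one has
\[
V(x,y)=\int_{\{|x-z|\ge\delta\}}P_y(x-z)\,v(z)\,dz,
\]
an integral against only the far-field part of the Poisson kernel. That kernel, together with $y^{1-2s}\partial_y P_y$, is smooth and bounded down to $y=0$ uniformly for $|x-z|\ge\delta$, so $V$ and its reflection $\widetilde V$ automatically lie in any local weighted Sobolev space near $\omega'\times\{0\}$, irrespective of the global $L^p$ integrability of $v$. The same computation identifies the weighted Neumann trace on $\omega'$ pointwise with the singular-integral representation of $(-\Delta)^s v$, consistent with the distributional hypothesis $(-\Delta)^s v|_\omega=0$.

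There is, however, a genuine imprecision in the sketch. You write that $\widetilde V$ ``vanishes to infinite order on a set of positive measure in its domain.'' The set $\omega\times\{0\}$ has $(n{+}1)$-dimensional Lebesgue measure zero, and vanishing on a hypersurface is not the same as infinite-order vanishing at a point. The correct step---and the actual content of R\"uland's argument---is that the simultaneous vanishing of both Cauchy data on $\omega\times\{0\}$, combined with the degenerate elliptic equation, forces $\widetilde V$ to vanish to infinite order in the weighted $L^2$ sense at each point of $\omega\times\{0\}$; the Carleman/doubling machinery is then applied at such a point. You should replace the positive-measure claim with an invocation of that boundary infinite-order-vanishing lemma.
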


The preceding proposition was shown in the proof of \cite[Theorem 1.2]{ghosh2016calder} for the case $v\in H^a(\R^n)$ for some $a\in \R$. 
In particular, Proposition \ref{Prop: strong uniqueness} was recently proved by Covi-M\"onkk\"onen-Railo \cite[Corollary 4.5]{CMR2020unique}.

We next prove the Runge approximation, and the mathematical settings are slightly different from \cite{ghosh2016calder}. In \cite{ghosh2016calder}, the authors proved any $L^2$ functions can be approximated by solutions of the fractional Schr\"odinger equation. In this work, our aim is only to demonstrate that any $L^a$-integrable functions for $a>1$, can be approximated by a sequence of $s$-harmonic functions.

\begin{thm}[Runge approximation for the fractional Laplacian]\label{Thm:runge} 
	For $n\geq 1$, $0<s<1$, let $\Omega\subseteq\mathbb{R}^{n}$
	be a bounded domain with $C^{1,1}$ boundary $\p \Omega$, and $O\Subset\Omega_{e}=\mathbb{R}^{n}\setminus\overline{\Omega }$
	be open.  Let $m\geq 2$, $m\in \N$. Given an arbitrary $a > 1$, for every $\varphi \in L^{a}(\Omega)$ there exists a sequence $g^{k}\in C_{c}^{\infty}(O)$,
	so that the corresponding solutions $v^{k}\in H^s(\R^n)$ to 
	\begin{align*}
	(-\Delta)^{s}v^{k}=0 \text{ in \ensuremath{\Omega}},\qquad u^{k}=g^{k} \text{ in }\Omega_{e},
	\end{align*}
	satisfy that $v^{k}|_{\Omega}\to \varphi$ in $L^{a}(\Omega)$ as $k\to \infty$. 
\end{thm}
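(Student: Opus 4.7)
The plan is to prove Theorem~\ref{Thm:runge} by a classical Hahn--Banach duality argument that reduces the density statement to the strong uniqueness principle recorded in Proposition~\ref{Prop: strong uniqueness}. First I would introduce the linear subspace
$$
\mathcal{R} := \left\{ v_g|_\Omega \,:\, g\in C_c^\infty(O) \right\} \subseteq L^a(\Omega),
$$
where $v_g\in H^s(\R^n)$ is the unique $s$-harmonic function with exterior datum $g$. Since $(L^a(\Omega))^\ast \cong L^{a'}(\Omega)$ with $1/a+1/a'=1$, the Hahn--Banach theorem reduces density of $\mathcal{R}$ in $L^a(\Omega)$ to the following assertion: if $\psi\in L^{a'}(\Omega)$ satisfies $\int_\Omega v_g\,\psi\,dx = 0$ for every $g\in C_c^\infty(O)$, then $\psi\equiv 0$.

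Next I would bring in the adjoint (linear) fractional Poisson problem
$$
(-\Delta)^s w = \psi \quad \text{in } \Omega, \qquad w = 0 \quad \text{in } \Omega_e.
$$
The existence of $w$ together with the crucial integrability $w\in L^p(\R^n)$ for some $1<p<2$ would be read off from the $L^p$-type estimates for the Dirichlet problem for the fractional Laplacian on a $C^{1,1}$ domain collected in Appendix~\ref{Appendix}; the hypothesis $\p\Omega\in C^{1,1}$ is built into the statement precisely to enable this. Since $v_g = g$ is supported in $O\subset \Omega_e$ and $w$ vanishes in $\Omega_e$, the symmetry of $(-\Delta)^s$ (the fractional Green's identity) gives
$$
0 \;=\; \int_{\R^n} \bigl((-\Delta)^s v_g\bigr)\, w \, dx \;=\; \int_{\R^n} v_g\,\bigl((-\Delta)^s w\bigr)\, dx \;=\; \int_\Omega v_g\,\psi \, dx + \int_O g\,\bigl((-\Delta)^s w\bigr)\, dx,
$$
where the leftmost integral vanishes because $(-\Delta)^s v_g \equiv 0$ in $\Omega$ and $w\equiv 0$ in $\Omega_e$. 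The assumption that $\psi$ annihilates $\mathcal{R}$ therefore forces $\int_O g\,(-\Delta)^s w\,dx = 0$ for every $g\in C_c^\infty(O)$, i.e.\ $(-\Delta)^s w = 0$ in $O$. Combined with $w = 0$ in $O\subset\Omega_e$, Proposition~\ref{Prop: strong uniqueness} (applied with the exponent $1<p<2$ provided by Appendix~\ref{Appendix}) yields $w\equiv 0$ in $\R^n$, whence $\psi = (-\Delta)^s w|_\Omega = 0$. This closes the duality argument and establishes the theorem.

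The main obstacle is securing the integrability $w \in L^p(\R^n)$ for some $1<p<2$ when $\psi$ lies only in $L^{a'}(\Omega)$ with $a'$ possibly close to $1$ (i.e.\ $a$ very large); the standard $H^s$-energy framework does not suffice and one must invoke the sharp $L^p$-estimates for the fractional Dirichlet problem on $C^{1,1}$ domains, which is exactly what Appendix~\ref{Appendix} is set up to deliver. A secondary technicality is the rigorous justification of the fractional Green's identity when $v_g\in H^s(\R^n)$ but $w$ has only the reduced integrability coming from the $L^p$-estimates; this can be handled by approximating $\psi$ by smoother data and passing to the limit, or by exploiting that $v_g$ has compact support in $\overline{\Omega}\cup\overline{O}$ so that all pairings $\langle v_g,(-\Delta)^s w\rangle$ and $\langle(-\Delta)^s v_g,w\rangle$ are legitimate $L^p$-$L^{p'}$ dualities.
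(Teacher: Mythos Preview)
Your proposal is correct and follows essentially the same route as the paper: a Hahn--Banach duality argument, the adjoint Dirichlet problem $(-\Delta)^s w=\psi$ in $\Omega$ with $w=0$ in $\Omega_e$, the $L^p$-estimates of Appendix~\ref{Appendix} to place $w$ in $L^p(\R^n)$ for some $1<p<2$, and finally Proposition~\ref{Prop: strong uniqueness} applied on $O$. The only cosmetic difference is that the paper writes the key identity via the $(-\Delta)^{s/2}$ bilinear form and the splitting $v_g=(v_g-g)+g$, whereas you phrase it as a direct fractional Green's identity; your remarks on the integrability issues are exactly the points the paper handles.
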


\begin{proof}
	The idea of the proof is similar to the proof of \cite[Theorem 1.3]{ghosh2016calder}, but we will use the fact that if $v$ is the solution of $(-\Delta)^s v=0$ in $\Omega$ with $v=g \in C^\infty_c(\Omega_e)$, then the well-posedness yields that $v\in H^s(\R^n)$.
	Furthermore, by using the global H\"older estimate \cite[Proposition 1.1]{ros2014dirichlet}, one has $v\in C^s(\R^n)$.
	
	In order to prove the theorem, let us consider the set 
	\begin{align*}
	\mathbb{D}=\left\{v_{g}|_{\Omega}\,;\ g\in C_{c}^{\infty}(O)\right\},
	\end{align*}
	where $v_{g}\in H^{s}(\mathbb{R}^{n})$ is the unique solution of 
	\begin{align}\label{s-harmonic v_g}
		\begin{cases}
		(-\Delta)^s v_g =0 &\text{ in }\Omega,\\
		 v_g=g &\text{ in }\Omega_e,
		\end{cases}
	\end{align}
	with $g\in C^\infty_c(\Omega_e) $. Then $\mathbb{D}$ is dense in $L^{a}(\Omega)$. Via \cite[Proposition 1.1]{ros2014dirichlet}, it is easy to see that $\mathbb{D}\subset C^s(\overline{\Omega})$ which implies $\mathbb{D}\subset  L^{a}(\Omega)$, for all $a>1$.
	By the Hahn-Banach theorem, it suffices to show that for any function
	$\varphi \in L^{r}(\Omega)$ satisfying $\int_{\Omega}\varphi v_g\, dx=0$ for any $v\in\mathbb{D}$, where $\frac{1}{r}+\frac{1}{a}=1$,
	then $\varphi\equiv0$.

	Let $\varphi$ be a such function, which means $\varphi$ satisfies
	\begin{equation}\label{eq:111111}
	\int_{\Omega}\varphi v_g\, dx=0,\quad \mbox{ for any }g\in C_{c}^{\infty}(O).
	\end{equation}
	Next, let $\phi$ be the solution of 
	\begin{align*}
		\begin{cases}
		(-\Delta)^s\phi=\varphi & \text{ in }\Omega, \\
		\phi =0 & \text{ in }\Omega_e.
		\end{cases}
	\end{align*}
    By using the $L^p$ estimate for the fractional Laplacian (see Proposition \ref{Prop: Lp estimate} and Remark \ref{Rmk of Lp estimate}), we know that $\phi \in L^p(\Omega)$ for some $p\in (1,2)$ since $\varphi\in L^r(\Omega)$ for some $r > 1$.
	
	We next claim that for any $g\in C_{c}^{\infty}(O)$,
	the following relation 
	\begin{equation}\label{eq:formal adjoint relation}
     	\int_{\Omega}\varphi v_g\, dx=-\int_{\R^n} (-\Delta)^{s/2} \phi (-\Delta)^{s/2}g\, dx
	\end{equation}
	holds. In other words, $\int_{\R^n}(-\Delta)^{s/2}\phi (-\Delta)^{s/2}w \, dx = \int_{\Omega}\varphi w\, dx $
	for any $w \in \mathbb{D} \subset  L^{a}(\Omega)$. In order to prove \eqref{eq:formal adjoint relation}, let $g\in C^\infty_c(O)$, and $v_g$ be the solution of \eqref{s-harmonic v_g}. Then by \cite[Proposition 1.1]{ros2014dirichlet}, we have $v_g\in C^s(\R^n)$ with $v_g-g\in C^s_0(\Omega)$ and 
	\begin{align*}
		\int_{\Omega}\varphi v_g \, dx=&\int_{\Omega}\varphi (v_g-g)\, dx \\
		=&\int_{\R^n} (-\Delta)^{s/2}\phi \cdot (-\Delta)^{s/2}(v_g-g) \, dx\\
		=&-\int_{\R^n}(-\Delta)^{s/2}\phi \cdot  (-\Delta)^{s/2}g\, dx,
	\end{align*}
	where we have utilized that $v_g$ is $s$-harmonic in $\Omega$ and $\phi =0$ in $\Omega_e$.

	Hence, \eqref{eq:111111} and \eqref{eq:formal adjoint relation} yield that 
	imply that 
	\[
	\int_{\R^n}(-\Delta)^{s/2} \phi (-\Delta)^{s/2}g\, dx=0, \quad \mbox{ for any }g\in C_{c}^{\infty}(O).
	\]
	Moreover, we know that $g|_\Omega=0$ due to $g\in C_{c}^{\infty}(O)$, then the Parseval's identity infers that 
	\[
	\int_{\R^n} (-\Delta)^s \phi g \, dx=0, \quad \mbox{ for any }g\in C_{c}^{\infty}(O).
	\]
	In the end, we know that $\phi\in L^p (\Omega)$ with $\phi =0$ in $\Omega_e$, which satisfies $\phi \in  L^p(\mathbb{R}^{n})$ for some $p\in (1,2)$, and 
	\[
	\phi|_{O}=(-\Delta)^{s}\phi|_{O}=0.
	\]
	By applying Proposition \ref{Prop: strong uniqueness}, we obtain $\phi\equiv0$ in $\R^n$ so that $v\equiv0$ as desired.
\end{proof}

Based on the Runge approximation, one can obtain the existence of the localized potentials immediately.

\begin{cor}[Localized potentials]\label{cor:localized_potentials} 
	For $n\geq 1$, let $\Omega\subseteq\mathbb{R}^{n}$
	be a bounded domain with $C^{1,1}$ boundary $\p \Omega$, $0<s<1$, and $O\subseteq\Omega_{e}=\mathbb{R}^{n}\setminus\overline{\Omega}$
	be an arbitrary open set. For any $a> 1$ and every measurable set $M\subseteq\Omega$, there exists
	a sequence $g^{k}\in C_{c}^{\infty}(O)$, so that the corresponding
	solutions $v^{k}\in H^{s}(\mathbb{R}^{n})$ of 
	\begin{align}\label{eq:cor_loc_pot_uk}
	(-\Delta)^{s}v^{k}=0\quad\text{ in \ensuremath{\Omega}},\quad v^{k}|_{\Omega_{e}}=g^{k},\text{ for all }k\in \mathbb N
	\end{align}
	satisfy that 
	\[
	\int_{M}|v^{k}|^{a}\, dx\to\infty\quad\text{ and }\quad\int_{\Omega\setminus M}|v^{k}|^{a}\, dx\to0 \quad  \mbox{ as }k\to \infty.
	\]
\end{cor}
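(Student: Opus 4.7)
The plan is to deduce this corollary from the Runge approximation theorem (Theorem \ref{Thm:runge}) by a standard normalization argument; the role of $M$ is only to specify a measurable target subset of positive measure (the conclusion being vacuously unachievable otherwise, so we tacitly assume $|M|>0$).

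First, I would fix a convenient target function that captures the desired geometric separation between $M$ and $\Omega\setminus M$. The simplest choice is $\varphi:=\chi_{M}\in L^{a}(\Omega)$, which vanishes on $\Omega\setminus M$ and satisfies $\int_{M}|\varphi|^{a}\,dx=|M|>0$. Applying Theorem \ref{Thm:runge} with this $\varphi$ produces a sequence $\widetilde g^{k}\in C_{c}^{\infty}(O)$ with corresponding $s$-harmonic solutions $\widetilde v^{k}\in H^{s}(\mathbb{R}^{n})$ such that $\widetilde v^{k}|_{\Omega}\to\chi_{M}$ in $L^{a}(\Omega)$.

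Second, I would read off two consequences from this $L^a$-convergence: restricting to the two disjoint subsets $M$ and $\Omega\setminus M$ gives
\[
\varepsilon_{k}:=\int_{\Omega\setminus M}|\widetilde v^{k}|^{a}\,dx\longrightarrow 0,
\qquad
\int_{M}|\widetilde v^{k}|^{a}\,dx\longrightarrow |M|>0.
\]
Discarding the (at most finitely many) indices for which $\widetilde v^k\equiv 0$ on $\Omega\setminus M$ (which would already be stronger than needed), I may assume $\varepsilon_{k}>0$.

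Third, I would rescale. Set $c_{k}:=\varepsilon_{k}^{-1/(2a)}$ and define $g^{k}:=c_{k}\widetilde g^{k}\in C_{c}^{\infty}(O)$, so by linearity of the fractional Laplacian the function $v^{k}:=c_{k}\widetilde v^{k}\in H^{s}(\mathbb{R}^{n})$ still solves \eqref{eq:cor_loc_pot_uk}. Then
\[
\int_{\Omega\setminus M}|v^{k}|^{a}\,dx = c_{k}^{a}\,\varepsilon_{k} = \varepsilon_{k}^{1/2}\longrightarrow 0,
\qquad
\int_{M}|v^{k}|^{a}\,dx = \varepsilon_{k}^{-1/2}\int_{M}|\widetilde v^{k}|^{a}\,dx\longrightarrow\infty,
\]
which is the desired conclusion.

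There is no real obstacle: the heavy lifting is already done by Theorem \ref{Thm:runge}, and the rest is a two-line normalization chosen so that $c_{k}^{a}\varepsilon_{k}\to 0$ while $c_{k}^{a}\to\infty$. The only mildly delicate point is the implicit assumption $|M|>0$, which is needed both for the target $\chi_{M}$ to be nontrivial and for the blow-up conclusion to be meaningful.
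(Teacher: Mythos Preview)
Your proposal is correct and follows essentially the same approach as the paper: apply the Runge approximation (Theorem~\ref{Thm:runge}) to the target $\chi_M$ and then rescale. The paper uses the slightly different normalization $c_k=\|\widetilde v^k\|_{L^a(\Omega\setminus M)}^{-1/a}$ (so that $\|v^k\|_{L^a(\Omega\setminus M)}^a=\|\widetilde v^k\|_{L^a(\Omega\setminus M)}^{a-1}\to 0$, using $a>1$), whereas your choice $c_k=\varepsilon_k^{-1/(2a)}$ works just as well; and where you discard the indices with $\varepsilon_k=0$, the paper instead invokes the strong uniqueness property (Proposition~\ref{Prop: strong uniqueness}) to rule them out. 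These are cosmetic differences.
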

\begin{proof}
	The proof is based on the Runge approximation (Theorem \ref{Thm:runge}) and the normalization argument.
	By Theorem \ref{Thm:runge}, there exists a sequence $\widetilde{g}^{k}\in C^\infty_c(\Omega_e)$
	so that the corresponding solutions $\widetilde{v}^{k}|_{\Omega}$
	converge to $\left(\dfrac{1}{|M|}\right)^{\frac{1}{a}}\chi_{M}$ in $L^{a}(\Omega)$, where $|M|$ denotes the Lebesgue measure of the measurable set $M$. This implies that
	\[
	\|\widetilde{v}^{k}\|_{L^{a}(M)}^a=\int_{M}|\widetilde{v}^{k}|^{a}\, dx\to 1,
	\quad\text{ and }\quad \|\widetilde{v}^{k}\|_{L^{a}(\Omega\setminus M)}^a=\int_{\Omega\setminus M}|\widetilde{v}^{k}|^{a}\, dx\to 0,
	\]
	as $k\to \infty$.
	
	Without loss of generality, we can assume for all $k\in \mathbb{N}$ that
	$\widetilde{v}^{k}\not\equiv0$, so that $\|\widetilde{v}^{k}\|_{L^{a}(\Omega\backslash M)}>0$
	follows due to the strong uniqueness of the fractional Laplacian (Proposition~\ref{Prop: strong uniqueness}). Assume that the normalized exterior data 
	\[
	g^{k}:=\dfrac{\widetilde{g}^{k}}{\|\widetilde{v}^{k}\|_{L^{a}(\Omega\backslash M)}^{1/a}} \in C^\infty_c(\Omega_e), 
	\]
	then the sequence of corresponding solutions $v^{k}\in C^{s}(\mathbb{R}^{n})$ of \eqref{eq:cor_loc_pot_uk} has the desired property that
	\begin{align}\label{the localized potentials}
		\begin{split}
		\norm{v^{k}}_{L^{a}(M)}^a = \frac{\|\widetilde{v}^{k}\|_{L^{a}(M)}^a}{ \|\widetilde{v}^{k}\|_{L^{a}(\Omega\setminus M)} } \to \infty,
		\text{ and }   \norm{v^{k}}_{L^{a}(\Omega\setminus M)}^a=\|\widetilde{v}^{k}\|^{a-1}_{L^{a}(\Omega\setminus M)}\to 0,
		\end{split}
	\end{align}
	as $k\to \infty$, where we have used the exponent $a>1$.
\end{proof}

\begin{rmk}
	 The construction of the localized potentials for is based on the Runge approximation for the fractional Laplacian, which is a linear fractional differential equation. Notice that one might be able to study the approximation property for the fractional semilinear elliptic equation $(-\Delta)^s u + qu^m=0$ for $m\geq 2$, $m\in \N$, however, one cannot expect the existence of the localized potential for fractional semilinear equations. The reason is due to the well-posedness (Proposition \ref{Prop:well posedness}), which requires sufficiently \emph{small} exterior data, such that the solution is small as well. Therefore, the well-posedness for the fractional semilinear elliptic equation \eqref{Main equation} is an obstruction to construct the energy concentration on any (positive) measurable region inside a given domain. This implies that the $L^a$-norm of the normalized solution (see \eqref{the localized potentials}) can be arbitrarily large in some region is impossible. 
\end{rmk}

\section{Converse monotonicity, uniqueness, and inclusion detection}\label{Sec 4}
This section consists the proof of the first main result of the work. With the localized potentials \eqref{the localized potentials} and the integral identity \eqref{integral identity} at hand, we can extend Theorem \ref{Thm monotonicity} to an if-and-only-if statement.

\subsection{Converse monotonicity and the fractional Calder\'on problem}
Let us prove the if-and-only-if monotonicity relation between the potential and the $m$-th order derivative of the DN map.

\begin{proof}[Proof of Theorem \ref{Thm: If-and-only-if monotonicity}]
	Via Theorem \ref{Thm monotonicity}, $q_1 \geq q_2$ a.e. in $\Omega$ implies $(D^m_0 \Lambda_{q_1})_0 \geq (D^m\Lambda_{q_2})_0$ (in the sense of Definition \ref{def of monotonicity}). The conclusion holds if we can show that $(D^m_0 \Lambda_{q_1})_0 \geq (D^m\Lambda_{q_2})_0$ implies $q_1\geq q_2$ a.e. in $\Omega$.
	
	Suppose that $(D^m_0 \Lambda_{q_1})_0 \geq (D^m\Lambda_{q_2})_0$ holds, then the integral identity \eqref{integral identity} yields that 
	\begin{align}\label{mono in proof of converse}
			\int_{\Omega }(q_1-q_2)(v_g)^{m}v_h\, dx \geq 0,
	\end{align}
	where $v_g=v_h$ if $m$ is odd and $v_h\geq 0$ if $m$ is even (see Definition \ref{def of monotonicity} and Theorem \ref{Thm monotonicity}). In order to show that $q_1 \geq q_2$ in $\Omega$, we prove it by a standard contradiction argument. Suppose that there exists a constant $\delta>0$ and a positive measurable subset $M\subset \Omega$ such that $q_2-q_1\geq \delta>0$ in $M$. By applying the localized potentials from Corollary \ref{cor:localized_potentials} for an appropriate exponent $a>1$, which will be determined later. Hence, there must exist a sequence $\{g^k\}$ such that the corresponding $s$-harmonic functions $v^{k}$ with $v^k=g^k$ in $\Omega_e$ satisfy
	\begin{align}\label{localized potentials in the converse}
	   \int_{M}|v^k|^a \, dx \to \infty \quad \text{ and } \quad \int_{\Omega\setminus M}|v^k|^a \, dx \to 0,
	\end{align}
	as $k\to \infty$.
	
	Combine with \eqref{mono in proof of converse}, then we have:
	\begin{itemize}
		\item[(a)] 	When $m$ is odd, we take the $s$-harmonic functions $v_g=v_h$ to be the localized potentials $\{v^k\}$ into \eqref{integral identity} such that 
		\begin{align*}
		0\leq & \int_{\Omega} (q_1-q_2)|v^k|^{m+1}\, dx \\
		 \leq &-\delta \int_{M} |v^k|^{m+1}\, dx + \norm{q_1-q_2}_{L^\infty(\Omega)}\int_{\Omega\setminus M}|v^k|^{m+1}\, dx \\
		 \to &-\infty, 
		\end{align*}
		as $k\to \infty$, where we have utilized \eqref{localized potentials in the converse} as the exponent $a=m+1$, then

		\item[(b)] When $m$ is even, we need to use the other monotonicity definition \ref{def mono 2}. In this case, we choose the exterior data $h\in C^\infty_c(\Omega_e)$, $h\geq 0$ and $h\not\equiv0$. Then by the maximum principle (Proposition \ref{Prop: max principle}) in Appendix \ref{Appendix max}, we must have $v_h>0$ in $\Omega$. Meanwhile, by using the global $C^s$ estimate for the solution to the fractional Laplacian, we have $v_h\in C^s(\R^n)$ whenever $h\in C^\infty_c(\Omega_e)$. Thus, by the continuity of $v_h$, there must exists a constant $c_h>0$ such that $v_h\geq c_h>0$ in $\overline{\Omega}$.
		
		Now, let us plug the $s$-harmonic functions $v_g$ to be the localized potentials $\{v^k\}$ and $v_h> 0$ into \eqref{integral identity} such that 
		
			\begin{align*}
		0\leq & \int_{\Omega} (q_1-q_2)|v^k|^{m}v_h\, dx \\
		\leq &-\delta c_h  \int_{M} |v^k|^{m}\, dx \\
		&+ \norm{q_1-q_2}_{L^\infty(\Omega)}\norm{v_h}_{L^\infty(\Omega)}\int_{\Omega\setminus M}|v^k|^{m}\, dx \\
		\to &-\infty, 
		\end{align*}
		as $k\to \infty$.
	\end{itemize}
     The preceding arguments yield a contradiction. This implies that that $q_1 \geq q_2$ in $\Omega$ in both cases (a) and (b). Therefore, we conclude the if-and-only-if monotonicity relations \eqref{if and only if monotonicity in Sec 4} holds.
\end{proof}

\begin{cor}\label{Cor: global uniqueness}
		Let $\Omega\subset \R^n$, $n\geq 1$ be a bounded domain with $C^{1,1}$ boundary $\p\Omega$, and $0<s<1$. Let $m\geq 2$, $m\in \N$.
	Let $q_1, q_2 \in L^\infty(\Omega)$, 
	and $\Lambda_{q_j}$ be the DN maps of the semilinear elliptic equations $(-\Delta)^s u + q_j u^{m}=0$ in $\Omega$ for $j=1,2$.
	Then we have 
	\begin{align*}
	q_1 = q_2 \text{ in }\Omega \quad \text{ if and only if } \quad (D^m_0 \Lambda_{q_1})_0 = (D^m\Lambda_{q_2})_0.
	\end{align*}
\end{cor}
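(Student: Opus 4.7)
The plan is to deduce this corollary directly from Theorem \ref{Thm: If-and-only-if monotonicity} by applying the if-and-only-if monotonicity relation twice, once in each direction. The forward implication is immediate: if $q_1=q_2$ a.e.\ in $\Omega$, then the integral identity \eqref{integral identity} gives $\langle[(D^m\Lambda_{q_1})_0-(D^m\Lambda_{q_2})_0](g,\ldots,g),h\rangle=0$ for every $g,h\in C^\infty_c(\Omega_e)$, so $(D^m\Lambda_{q_1})_0=(D^m\Lambda_{q_2})_0$ as multilinear forms.

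For the converse, assume $(D^m\Lambda_{q_1})_0=(D^m\Lambda_{q_2})_0$. Then in particular the inequalities $(D^m\Lambda_{q_1})_0\geq(D^m\Lambda_{q_2})_0$ and $(D^m\Lambda_{q_2})_0\geq(D^m\Lambda_{q_1})_0$ hold simultaneously, in the sense of Definition \ref{def of monotonicity}, because both sides of the pairing in \eqref{def mono 1} or \eqref{def mono 2} vanish identically. Applying Theorem \ref{Thm: If-and-only-if monotonicity} to the first inequality yields $q_1\geq q_2$ a.e.\ in $\Omega$, and applying it to the second yields $q_2\geq q_1$ a.e.\ in $\Omega$. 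Combining these gives $q_1=q_2$ a.e.\ in $\Omega$.

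There is no genuine obstacle here: Theorem \ref{Thm: If-and-only-if monotonicity} already encodes both the monotonicity and its converse, and an equality of multilinear forms trivially implies both inequalities in Definition \ref{def of monotonicity}. The only small point worth stating explicitly in the proof is that the equality $(D^m\Lambda_{q_1})_0=(D^m\Lambda_{q_2})_0$ is interpreted in the sense that the associated pairings agree for all admissible test pairs $(g,h)$, which is exactly what is needed to feed into the definition of the monotonicity inequalities in both the odd and even $m$ cases.
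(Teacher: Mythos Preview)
Your proposal is correct and matches the paper's approach exactly: the paper's proof simply states that the result follows immediately from Theorem~\ref{Thm: If-and-only-if monotonicity}, and your argument spells out precisely the two applications of that theorem (one for each inequality) that make this work.
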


\begin{proof}
	The results follows immediately from Theorem \ref{Thm: If-and-only-if monotonicity}.
\end{proof}

\begin{rmk}We want to point out that:
	\begin{itemize} 
		\item[(a)] The if-and-only-if monotonicity relations has been shown by Theorem \ref{Thm: If-and-only-if monotonicity} for general potentials $q_1,q_2\in L^\infty(\Omega)$, without any sign constraints. For the (linear) fractional Sch\"odinger equation, the monotonicity relations can be proved by using the \emph{Lowner order} (see \cite{harrach2020monotonicity,HPS2019dimension}), which involves more functional analysis techniques in the arguments. We also refer readers to the further study \cite{HPSmonotonicity} for the local case. 
		
		\item[(b)] Corollary \ref{Cor: global uniqueness} is derived via the monotonicity method (Theorem \ref{Thm: If-and-only-if monotonicity}).
		In fact, in order to determine $q_1=q_2$ in $\Omega$, one can only consider the condition of the original DN maps  $\Lambda_{q_1}=\Lambda_{q_2}$ in the exterior domain. The proof is based on the higher order linearization and the Runge approximation for the fractional Laplacian, which needs to prove $(D^m_0 \Lambda_{q_1})_0 = (D^m\Lambda_{q_2})_0$ by assuming $\Lambda_{q_1}=\Lambda_{q_2}$. For more details in different approaches, we refer the reader to \cite{LL2020inverse}. 
	\end{itemize}
\end{rmk}

\subsection{A monotonicity-based reconstruction formula}
In the end of this section, let us demonstrate a proof of the constructive uniqueness for the potential $q\in L^\infty(\Omega)$ of the fractional semilinear elliptic equation \eqref{Main equation}. Inspired by  \cite{harrach2017nonlocal-monotonicity,harrach2020monotonicity}, we will show that the potential $q\in L^\infty(\Omega)$ can be reconstructed from the DN map $\Lambda_q$ by testing $\Lambda_\psi$, where $\psi$ is a \emph{simple function}.

To this end, let $M$ be a measurable set, and $M$ is called a \emph{density one set} if it is non-empty, measurable and has Lebesgue density $1$ for all $x\in M$.
The set of density one simple functions is defined by
\begin{align*}
	\Sigma &:=\textstyle \left\{ \psi=\sum_{j=1}^m a_j \chi_{M_j}:\ a_j\in \mathbb{R},\ \text{$M_j\subseteq \Omega$ is a density one set} \right\},
\end{align*}
Notice that every simple function agrees with a density one simple function almost everywhere due to the Lebesgue's density theorem. For our purposes, it is important to control the values on measure zero sets since these values 
might still affect the supremum when the supremum is taken over uncountably many functions.

We have the following constructive global uniqueness result.

\begin{thm}\label{Thm:constructive} Let $n\geq 1$, $\Omega\subset \R^n$ be a bounded domain with $C^{1,1}$ boundary $\p \Omega$, and $s\in (0,1)$. Let $q\in L^\infty(\Omega)$ and $\Lambda_{q}$ be the DN maps of the semilinear elliptic equations $(-\Delta)^s u + q u^{m}=0$ in $\Omega$, where $m\geq 2$, $m\in \N$. A potential $q=q(x)$
	can be uniquely recovered by $(D^m\Lambda_q)_0$ via the following
	formula
	\begin{align}\label{reconstruction formula for potential}
		\begin{split}
		q(x) =&\sup\left\{ \psi(x):\ \psi\in \Sigma,\ (D^m\Lambda_\psi)_0\leq (D^m\Lambda_q)_0 \right\}\\
		&+\inf\left\{ \psi(x):\ \psi\in \Sigma,\ (D^m\Lambda_\psi)_0\geq (D^m\Lambda_q)_0 \right\},
		\end{split}
	\end{align}
	for all $x\in \Omega$.
\end{thm}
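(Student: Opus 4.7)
The plan is to reduce the reconstruction formula to the if-and-only-if monotonicity established in Theorem \ref{Thm: If-and-only-if monotonicity}, and then to carry out a pointwise analysis using the density-one structure built into $\Sigma$. The if-and-only-if direction is crucial: the forward direction of monotonicity alone would only yield one-sided inclusions, whereas Theorem \ref{Thm: If-and-only-if monotonicity} lets me replace each inequality between $m$-th derivatives of DN maps by an equivalent pointwise a.e.\ inequality between the test simple function and $q$.

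Concretely, I first rewrite the two constraint sets on the right-hand side as
\[
\{\psi \in \Sigma :\ (D^m\Lambda_\psi)_0 \leq (D^m\Lambda_q)_0\} = \{\psi \in \Sigma :\ \psi \leq q \text{ a.e.\ in } \Omega\},
\]
and analogously for the $\geq$ version. Thus the task reduces to showing that the pointwise supremum and infimum, taken over density-one simple functions bounded a.e.\ above or below by $q$, combine to recover $q(x)$ at every $x \in \Omega$.

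Next I would prove the pointwise bounds separately. For the supremum side, the easy direction gives an upper bound by $q(x)$: if $\psi = \sum_j a_j \chi_{M_j} \in \Sigma$ satisfies $\psi \leq q$ a.e.\ and $x \in M_{j_0}$, then the density-one property at $x$ together with Lebesgue's differentiation theorem forces $a_{j_0} \leq q(x)$ at every point, not merely almost every point. For the reverse inequality, given $\varepsilon > 0$ I would apply the Lebesgue density theorem to the superlevel set $\{y \in \Omega :\ q(y) > q(x) - \varepsilon\}$ to extract a density-one set $M \ni x$ on which $q \geq q(x) - \varepsilon$ a.e., and then define
\[
\psi := (q(x) - \varepsilon)\chi_M + c_0 \chi_{\Omega \setminus M} \in \Sigma,
\]
with $c_0$ chosen below $\operatorname{essinf}_\Omega q$ so that $\psi \leq q$ a.e.\ in $\Omega$ and $\psi(x) = q(x) - \varepsilon$. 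Passing $\varepsilon \to 0^+$ produces the matching lower bound. The infimum side is treated by the symmetric construction, approximating $q$ from above at $x$ by density-one simple functions.

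The main obstacle is arranging all of these inequalities to hold at every $x \in \Omega$ rather than only a.e.; without this, the formula would be meaningless on the exceptional null sets where the supremum and infimum are taken over uncountably many competing $\psi$. This is precisely why the density-one restriction is built into $\Sigma$: each $\psi \in \Sigma$ has an unambiguous pointwise representative via the density-one sets $M_j$, and Lebesgue's density theorem then upgrades the a.e.\ constraint $\psi \leq q$ to the pointwise bound $\psi(x) \leq q(x)$. Combining the sup and inf analyses then yields the reconstruction formula \eqref{reconstruction formula for potential} at every $x \in \Omega$.
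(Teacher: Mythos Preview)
Your reduction via Theorem~\ref{Thm: If-and-only-if monotonicity} is correct and matches the paper, but the pointwise analysis that follows contains a genuine error: you claim that
\[
\sup\{\psi(x):\ \psi\in\Sigma,\ \psi\leq q\ \text{a.e.}\}=q(x)
\quad\text{and}\quad
\inf\{\psi(x):\ \psi\in\Sigma,\ \psi\geq q\ \text{a.e.}\}=q(x),
\]
which would force the right-hand side of \eqref{reconstruction formula for potential} to equal $2q(x)$, not $q(x)$. The flaw is in your upper bound for the supremum. You only treat the case $x\in M_{j_0}$; you never handle $x\notin\bigcup_j M_j$, where $\psi(x)=0$. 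That case is not vacuous. For instance, if $q\equiv -1$ on $\Omega$, the set $M:=\Omega\setminus\{x\}$ is a density-one set, and $\psi:=-2\,\chi_M\in\Sigma$ satisfies $\psi\leq q$ a.e.\ with $\psi(x)=0>q(x)$. Thus the supremum is at least $0$, and in fact one always has
\[
\sup\{\psi(x):\ \psi\in\Sigma,\ \psi\leq q\}=\max\{q(x),0\},
\qquad
\inf\{\psi(x):\ \psi\in\Sigma,\ \psi\geq q\}=\min\{q(x),0\},
\]
so that their \emph{sum} recovers $q(x)$. This is precisely the content of Lemma~\ref{lemma:sup_simple_functions} (cited from \cite{harrach2020monotonicity}), and it is why the paper routes the argument through the decomposition $q=\max\{q,0\}-\max\{-q,0\}$ rather than trying to hit $q(x)$ with each of the sup and inf separately.

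A secondary technical point: in your lower-bound construction $\psi=(q(x)-\varepsilon)\chi_M+c_0\chi_{\Omega\setminus M}$, membership in $\Sigma$ requires $\Omega\setminus M$ to be a density-one set, which is not automatic for the superlevel set $M$ you extract; this needs an additional trimming argument. But the main issue is the one above: once you correct the values of the sup and inf to $q^+(x)$ and $-q^-(x)$, your outline collapses into the paper's proof.
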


\begin{rmk}
	For the local case $s=1$ and $m=2$, the reconstruction formula for the potential $q(x)$ has been studied in \cite[Corollary 3.1]{LLLS2019nonlinear}\footnote{In fact, the reconstruction formula in \cite[Corollary 3.1]{LLLS2019nonlinear} also holds for general $m\geq 2$ with $m\in \N$ in any bounded Euclidean domain $\Omega \subset \R^n$.}. The reconstruction formula was using the known the \emph{Calder\'on exponential solutions} \cite{calderon} for the Laplace equation.
\end{rmk}

To prove Theorem~\ref{Thm:constructive}, let us adapt the following lemma which was shown in \cite[Lemma 4.4]{harrach2020monotonicity}.
\begin{lem}[Simple function approximation]\label{lemma:sup_simple_functions}
	For any function $q\in L^\infty(\Omega)$, and $x\in \Omega$ a.e., we have that
	\begin{align*}
		\max\{q(x),0\}&=\sup\{ \psi(x):\ \psi\in \Sigma \text{ with } \psi\leq q \}.
	\end{align*}
\end{lem}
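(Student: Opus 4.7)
The plan is to establish the pointwise (almost everywhere) equality by proving both inequalities separately, relying on Lebesgue differentiation and the stability of density-one sets under null modifications. Throughout, I restrict attention to the full-measure set of Lebesgue points $x_0 \in \Omega$ of $q$ that are simultaneously Lebesgue density points of whichever of $\{q \geq 0\}$ or $\{q < 0\}$ contains them.

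For the $\leq$ direction, I would fix any $\psi = \sum_{j=1}^m a_j \chi_{M_j} \in \Sigma$ with $\psi \leq q$ a.e., and refine the representation to a disjoint one by replacing intersections and set differences of the $M_j$'s by their Lebesgue-density-one points (which are themselves density-one sets). At a Lebesgue point $x_0$ of $q$: either $x_0$ lies in a unique refined piece $M'_k$ with coefficient $a'_k$, in which case density one of $M'_k$ at $x_0$ combined with $a'_k \leq q$ a.e.\ on $M'_k$ forces $a'_k \leq q(x_0)$ (via Lebesgue differentiation on small balls around $x_0$); or $x_0$ lies outside the union and $\psi(x_0) = 0$. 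In both cases $\psi(x_0) \leq \max\{q(x_0), 0\}$, and taking supremum over $\psi$ yields the first inequality.

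For the $\geq$ direction, let $A := \{y \in \Omega \cap \{q < 0\} : \{q < 0\}$ has Lebesgue density one at $y\}$, which is a density-one subset of $\{q < 0\}$ of full measure in $\{q < 0\}$. If $q(x_0) < 0$, I set $M' := A \setminus \{x_0\}$; since removing a single null point does not alter the density at any remaining point, $M'$ is still density one, and $\psi := -\|q\|_{L^\infty(\Omega)} \chi_{M'} \in \Sigma$ satisfies $\psi \leq q$ a.e.\ (trivially on $M'$, and on $\Omega \setminus M' = (\Omega \setminus A) \cup \{x_0\}$ because $q \geq 0$ a.e.\ on $\Omega \setminus A$), with $\psi(x_0) = 0 = \max\{q(x_0), 0\}$. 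If $q(x_0) \geq 0$, then for each $\epsilon > 0$ I let $B_\epsilon^*$ be the density-one points of $B_\epsilon := \{q \geq q(x_0) - \epsilon\}$ in $B_\epsilon$; a standard Lebesgue point argument gives $x_0 \in B_\epsilon^*$, and
\[
\psi_\epsilon := (q(x_0)-\epsilon)\chi_{B_\epsilon^* \setminus A} - \|q\|_{L^\infty(\Omega)} \chi_A \in \Sigma
\]
is easily checked to satisfy $\psi_\epsilon \leq q$ a.e.\ and $\psi_\epsilon(x_0) = q(x_0) - \epsilon$. Letting $\epsilon \to 0$ yields the opposite inequality.

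The main obstacle is the case $q(x_0) < 0$: the constraint $\psi \leq q$ a.e.\ seems at first to force $\psi(x_0) \leq q(x_0) < 0$, in apparent contradiction with the claimed $\sup = 0$. The resolution is twofold and is the essential trick of the lemma: first, a density-one simple function evaluates to $0$ at every point outside its defining sets, so $\psi$ is not literally constrained to lie below $q$ at each individual point; second, one may puncture any density-one set at the single point $\{x_0\}$ without destroying the density-one property, ensuring $x_0$ falls in the $\psi = 0$ region. Beyond this observation, the remaining verifications are routine applications of Lebesgue differentiation and careful bookkeeping of the disjoint decomposition defining $\psi_\epsilon$.
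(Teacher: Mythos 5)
The paper does not prove this lemma at all (it simply cites \cite[Lemma~4.4]{harrach2020monotonicity}), so your argument can only be judged on its own terms, and there is a genuine gap in your ``$\leq$'' direction. You pass from $\psi=\sum_j a_j\chi_{M_j}$ to a refined, pairwise disjoint density-one representation and then evaluate at $x_0$; but the refined function agrees with $\psi$ only up to a null set, and $x_0$ may lie exactly in that null set, so your bound controls the refined function's value at $x_0$, not $\psi(x_0)$. This is not a repairable technicality under the definition of $\Sigma$ as stated in this paper (which does \emph{not} require the $M_j$ to be pairwise disjoint) together with the a.e.\ reading of $\psi\leq q$ that you adopt: for any admissible $\psi_0$ (say $\psi_0=-\norm{q}_{L^\infty(\Omega)}\chi_\Omega$) and any $c>0$, the function $\psi_c:=(c-\norm{q}_{L^\infty(\Omega)})\chi_{\Omega}+(-c)\chi_{\Omega\setminus\{x_0\}}$ belongs to $\Sigma$ (both $\Omega$ and $\Omega\setminus\{x_0\}$ are density one sets), satisfies $\psi_c\leq q$ a.e.\ (it equals $-\norm{q}_{L^\infty(\Omega)}$ off the single point $x_0$), yet $\psi_c(x_0)=c-\norm{q}_{L^\infty(\Omega)}\to\infty$. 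Hence the supremum is $+\infty$ at \emph{every} $x_0$, and the inequality you are trying to prove is false for this $\Sigma$. The statement becomes true (and your Lebesgue-point argument then works directly, with no refinement step at all) once the $M_j$ in the definition of $\Sigma$ are required to be pairwise disjoint, which is the setting of the cited Harrach--Lin lemma; the disjointness is precisely what rules out the cancellation trick above, since then $\psi\equiv a_k$ on a set of density one at $x_0$ and $\psi\leq q$ a.e.\ forces $a_k\leq q(x_0)$ at Lebesgue points.

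Your ``$\geq$'' direction is essentially sound, including the key observation that one may puncture a density-one set at $x_0$ to handle $q(x_0)<0$, but note one small flaw: $B_\epsilon^*\setminus A$ need not be a density one set (a point $y\in B_\epsilon^*$ with $q(y)\geq 0$ can still be a density point of $\{q<0\}$, hence of $A$). This is fixable by replacing $B_\epsilon^*\setminus A$ with the set of its own density-one points; $x_0$ survives this replacement because, on your full-measure set, $x_0$ is a Lebesgue density point of $\{q\geq 0\}$, so $A$ has density zero at $x_0$. With that repair, and with the disjointness requirement restored in the definition of $\Sigma$, your construction of $\psi_\epsilon$ is admissible and the limit $\epsilon\to 0$ gives the desired lower bound.
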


With the preceding lemma at hand, we can prove Theorem \ref{Thm:constructive}.

\begin{proof}[Proof of Theorem \ref {Thm:constructive}.]
Via Lemma~\ref{lemma:sup_simple_functions} and Theorem \ref{Thm: If-and-only-if monotonicity},
the potential $q\in L^\infty(\Omega)$ can be reconstructed by 
\begin{align*}
\begin{split}
	\quad q(x)
&=\max\{q(x),0\}-\max\{ -q(x),0\}\\
&=\sup\{ \psi(x):\ \psi\in \Sigma,\ \psi\leq q \}
-\sup\{ \psi(x):\ \psi\in \Sigma,\ \psi\leq -q \}\\
&=\sup\{ \psi(x):\ \psi\in \Sigma,\ \psi\leq q \}
+\inf\{ \psi(x):\ \psi\in \Sigma,\ \psi\geq q \}\\
&=\sup\left\{ \psi(x):\ \psi\in \Sigma,\ (D^m\Lambda_\psi)_0\leq (D^m\Lambda_q)_0 \right\} \\
& \quad +\inf\left\{ \psi(x):\ \psi\in \Sigma,\ (D^m\Lambda_\psi)_0\geq (D^m\Lambda_q)_0 \right\},\\
\end{split}
\end{align*}
for almost everywhere $x\in \Omega$. This shows \eqref{reconstruction formula for potential} holds for almost everywhere $x\in \Omega$. This completes the proof.
\end{proof}

\subsection{Inclusion detection by the monotonicity test} 
In this subsection, we will prove the second main result of this paper. 
The proof is also based on the if-and-only-if monotonicity relations (Theorem \ref{Thm: If-and-only-if monotonicity}), which can be regarded as an application of the converse monotonicity relation. Recall that the testing operator $T_M:H^s(\Omega_e)^m\to H^s(\Omega)^\ast $ is defined by 
\begin{align*}
\langle (T_M )(g,\ldots, g),  h \rangle  =\int_M v_g^m v_h \, dx,
\end{align*}
where $v_g$ and $v_h$ are $s$-harmonic in $\Omega$ with $v_g=g$ and $v_h=h$ in $\Omega_e$, respectively.  

\begin{proof}[Proof of Theorem \ref{Thm:support_from_closed_sets}]
 Let $\supp(q-q_0)\subset C$, then there must exist some (large) constant $\alpha>0$ such that 
 \begin{align}\label{condition with inclusion jump}
 	-\alpha \chi _C \leq q-q_0 \leq \alpha \chi _C.
 \end{align}
 By using Theorem \ref{Thm: If-and-only-if monotonicity}, we know that \eqref{condition with inclusion jump} is equivalent to 
 \begin{align}\label{condition with inclusion DN maps}
 (D^m\Lambda_{q_0-\alpha \chi _C})_0 \leq  (D^m \Lambda_q)_0 \leq (D^m\Lambda_{q_0 +\alpha \chi_c})_0.
 \end{align}
 Furthermore, via the identity for the $m$-th order derivative of the DN map \eqref{dm_lambdaq_identity}, the elements $(D^m\Lambda_{q_0\pm \alpha \chi _C})_0 $ in \eqref{condition with inclusion DN maps} can be written as 
 \begin{align}\label{linear expansion}
 	\begin{split}
 	&\langle (D^m\Lambda_{q_0\pm \alpha \chi _C})_0 (g,\ldots, g), h\rangle \\
 	 = &\int_{\Omega} (q_0\pm \alpha \chi _C)v_g^m v_h \, dx \\
 	= & \int_{\Omega} q_0v_g^m v_h \, dx \pm \alpha \int_{C}  v_g^m v_h \, dx \\
 	=&\langle (D^m\Lambda_{q_0})_0 (g,\ldots, g), h\rangle \pm  \alpha \langle T_M (g,\ldots, g), h\rangle,
 	\end{split}
 \end{align}
 where we have used the definition \eqref{testing operator}. Combining \eqref{condition with inclusion DN maps} and \eqref{linear expansion}, one obtains
 \begin{align}\label{monotonicity condition with testing operator}
 	\begin{split}
 	 (D^m\Lambda_{q_0})_0 -\alpha T_C \leq (D^m\Lambda_q)_0 \leq (D^m\Lambda_{q_0})_0 +\alpha T_C,
 	\end{split}
 \end{align}
provided that the condition \eqref{condition with inclusion jump} holds.
 
 We next prove the converse part that if there exists some $\alpha>0$ such that \eqref{monotonicity condition with testing operator} holds, then we must have  $\supp(q-q_0)\subset C$. Suppose \eqref{monotonicity condition with testing operator} holds, then Theorem \ref{Thm: If-and-only-if monotonicity} implies that 
 \begin{align*}
 	-\alpha \chi _C\leq q-q_0 \leq \alpha \chi _C.
 \end{align*}
The above inequality already shows that $q-q_0=0$ in $\Omega \setminus C$, which infers that $\supp(q-q_0)\subset C$ as desired. Hence, the assertion is proved by the monotonicity test.
\end{proof}

Note that in the statement of Theorem \ref{Thm:support_from_closed_sets}, we do not need to assume the definite case, i.e., either $q\geq q_0$ or $q\leq q_0$ in $\Omega$. We will demonstrate that it is enough to test open sets to reconstruct the \emph{inner support} for either $q\geq q_0$ or $q\leq q_0$. 

\begin{defi}
	The inner support $\mathrm{inn}\, \mathrm{supp}(\phi)$ of a measurable function $\phi:\Omega\to \R$ is the union of all open sets $U\subseteq \Omega$, for which the essential infimum $\mathrm{ess}\, \inf_{x\in U}|\phi(x)|$ is positive.
\end{defi}

\begin{thm}\label{thm:support_from_open_balls} Let $q_0 , q \in L^\infty(\Omega)$ be potentials. For the definite case, we have:
	\begin{enumerate}
		\item[(a)] Let $q\leq q_0$. For every open set $B \subseteq \Omega$ and every $\alpha>0$
		
		\begin{align}\label{eq:supp_open_balls_a1} 
			q&\leq q_0-\alpha \chi_B \implies (D^m\Lambda_q)_0 \leq (D^m\Lambda_{q_0})_0-\alpha T_{B} \implies B \subseteq \mathrm{supp}(q-q_0).
		\end{align}
	    Thus,
		\begin{align*}
		\lefteqn{\mathrm{inn\,supp}(q-q_0)}\\
		&\subseteq \bigcup \left\{B\subseteq \Omega \text{ open ball}:\ \exists \alpha>0: (D^m\Lambda_q)_0 \leq (D^m\Lambda_{q_0})_0-\alpha T_{B}\right\}\\
		&\subseteq \mathrm{supp}(q-q_0).
		\end{align*}
		\item[(b)] Let $q\geq q_0$. For every open set $B\subseteq \Omega$ and every $\alpha>0$

		\begin{align}\label{eq:supp_open_balls_b1} 
			q\geq q_0+\alpha \chi_B \implies 
			(D^m\Lambda_q) \geq (D^m\Lambda_{q_0})_0+\alpha T_{B},
		\end{align}
		and 
		\begin{align}	\label{eq:supp_open_balls_b2}
		(D^m\Lambda_q)_0 \geq (D^m \Lambda_{q_0})_0+\alpha T_{B}  \implies
		q\geq q_0+\alpha \chi_B.
		\end{align}
		Thus,
		\begin{align*}
		& \mathrm{inn\,supp}(q-q_0)\\
		&=\bigcup \left\{B\subseteq \Omega \text{ open ball}:\ \exists \alpha>0: (D^m\Lambda_q)_0 \geq (D^m \Lambda_{q_0})_0+\alpha T_{B}\right\}.
		\end{align*}
	\end{enumerate}
\end{thm}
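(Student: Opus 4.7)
The plan is to derive Theorem \ref{thm:support_from_open_balls} as a direct consequence of the if-and-only-if monotonicity (Theorem \ref{Thm: If-and-only-if monotonicity}) combined with the linearity identity recorded inside the proof of Theorem \ref{Thm:support_from_closed_sets}, namely
\[
(D^m\Lambda_{q_0 \pm \alpha \chi_B})_0 \;=\; (D^m\Lambda_{q_0})_0 \pm \alpha\, T_B,
\]
which turns perturbations of the potential by $\pm \alpha \chi_B$ into testing operators on the DN-map side. This single identity, together with both directions of Theorem \ref{Thm: If-and-only-if monotonicity}, is enough to convert each pointwise comparison in the statement into an equivalent DN-map comparison and conversely.

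For the first implication in \eqref{eq:supp_open_balls_a1} I would rewrite $q \leq q_0 - \alpha \chi_B$ as a pointwise comparison between $q$ and the perturbed potential $q_0 - \alpha \chi_B$; Theorem \ref{Thm: If-and-only-if monotonicity} then gives $(D^m\Lambda_q)_0 \leq (D^m\Lambda_{q_0 - \alpha \chi_B})_0$, and the linearity identity above produces the desired DN-map inequality. The same two-line reasoning yields \eqref{eq:supp_open_balls_b1}. For the converse implication \eqref{eq:supp_open_balls_b2}, the hypothesis rewrites as $(D^m\Lambda_q)_0 \geq (D^m\Lambda_{q_0 + \alpha \chi_B})_0$, and the converse direction of Theorem \ref{Thm: If-and-only-if monotonicity} returns $q \geq q_0 + \alpha \chi_B$ a.e.\ in $\Omega$. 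For the second implication in \eqref{eq:supp_open_balls_a1}, the same argument produces $q \leq q_0 - \alpha \chi_B$ a.e., hence $q - q_0 \leq -\alpha < 0$ on $B$, which immediately gives $B \subseteq \mathrm{supp}(q - q_0)$.

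The two set-valued characterizations then follow by a point-selection argument. For the inclusion $\mathrm{inn\,supp}(q-q_0)\subseteq$ (union of balls) in either (a) or (b), fix $x \in \mathrm{inn\,supp}(q-q_0)$: by definition there is an open neighborhood $U\ni x$ with $\mathrm{ess\,inf}_{y\in U}|(q-q_0)(y)| \geq \alpha > 0$; choose an open ball $B \subseteq U$ containing $x$. The sign hypothesis ($q \leq q_0$ in (a), $q \geq q_0$ in (b)) upgrades the absolute-value lower bound to a one-sided bound $q \leq q_0 - \alpha \chi_B$ (respectively $q \geq q_0 + \alpha \chi_B$), and the first implication in the corresponding part provides the required DN-map inequality. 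For the reverse inclusion in (a), the second implication in \eqref{eq:supp_open_balls_a1} already gives $B \subseteq \mathrm{supp}(q-q_0)$; in (b), the stronger implication \eqref{eq:supp_open_balls_b2} gives $q - q_0 \geq \alpha > 0$ on $B$, so $B \subseteq \mathrm{inn\,supp}(q-q_0)$.

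The only delicate point I anticipate is the role of the one-sided assumption $q \leq q_0$ or $q \geq q_0$: without it one could only detect the symmetric support as in Theorem \ref{Thm:support_from_closed_sets}, whereas the signed hypothesis is exactly what allows the absolute-value-based definition of the inner support to be converted into the one-sided pointwise inequality used to invoke \eqref{eq:supp_open_balls_a1} or \eqref{eq:supp_open_balls_b1}. Beyond this, the proof is essentially bookkeeping built on Theorem \ref{Thm: If-and-only-if monotonicity} and the linearity identity; no new analytical ingredient is required.
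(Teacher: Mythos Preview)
Your proposal is correct and follows essentially the same route as the paper: both rely on Theorem~\ref{Thm: If-and-only-if monotonicity} together with the linearity identity $(D^m\Lambda_{q_0\pm\alpha\chi_B})_0=(D^m\Lambda_{q_0})_0\pm\alpha T_B$ from the proof of Theorem~\ref{Thm:support_from_closed_sets}.

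There is one small but genuine difference worth pointing out. For the second implication in \eqref{eq:supp_open_balls_a1}, the paper does \emph{not} invoke the converse direction of Theorem~\ref{Thm: If-and-only-if monotonicity} directly; instead it rewrites the DN-map inequality as an integral inequality and reruns the localized-potentials contradiction argument to obtain $\alpha\chi_B\leq \|q-q_0\|_{L^\infty(\Omega)}\chi_{\mathrm{supp}(q-q_0)}$, hence $B\subseteq\mathrm{supp}(q-q_0)$. Your route is cleaner: since $(D^m\Lambda_q)_0\leq (D^m\Lambda_{q_0-\alpha\chi_B})_0$, Theorem~\ref{Thm: If-and-only-if monotonicity} immediately returns $q\leq q_0-\alpha\chi_B$ a.e., and in fact yields the stronger conclusion $B\subseteq\mathrm{inn\,supp}(q-q_0)$ (so that the chain of inclusions in part~(a) collapses to an equality, exactly as in part~(b)). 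This is not a gap on your side---it is simply a tighter argument than the paper's, and it explains why the asymmetry between the stated conclusions of (a) and (b) is not intrinsic. You also spell out the ``Thus'' set-identities via the inner-support definition and the sign hypothesis, which the paper leaves implicit; your handling of that step is correct.
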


\begin{proof}(a) If $q\leq q_0-\alpha \chi_B$, by using Theorem \ref{Thm: If-and-only-if monotonicity} and adapting the same trick as in the proof of Theorem \ref{Thm:support_from_closed_sets}, we have that 
		\begin{align*}
		(D^m\Lambda_q)_0-(D^m\Lambda_{q_0})_0\leq  -\alpha T_{B}.
		\end{align*}
		Moreover, if $	(D^m\Lambda_q)_0 \leq (D^m\Lambda_{q_0})_0\leq  -\alpha T_{B}$, by Theorem \ref{Thm: If-and-only-if monotonicity} and Theorem \ref{Thm:support_from_closed_sets} again, that there exists $c>0$ with
		\begin{align*}
		\alpha T_{B}\leq  (D^m \Lambda_{q_0})_0-(D^m\Lambda_q)_0=\int_{\Omega}  (q_0-q)v_g^m v_h\, dx,
		\end{align*}
		which implies 
		\begin{align}\label{integral inequ in inn supp 1}
			\int_{\Omega}\left( \alpha \chi_B v_g^m v_h -\norm{q-q_0}_{L^\infty(\Omega)} \chi_{\supp(q-q_0)}  v_g^m v_h\right) dx\leq 0.
		\end{align}
	    With the localized potential for the fractional Laplacian at hand (similar to the proof of Theorem \ref{Thm: If-and-only-if monotonicity}), the inequality \eqref{integral inequ in inn supp 1} must yield 
	     \[
	     \alpha \chi _B \leq \norm{q-q_0}_{L^\infty(\Omega)} \chi _{\supp(q-q_0)}.
	     \]
		
		(b) The results \eqref{eq:supp_open_balls_b1} and \eqref{eq:supp_open_balls_b2} are simple applications of Theorem \ref{Thm: If-and-only-if monotonicity}. 		
\end{proof}

\section{Lipschitz stability with finitely many measurements}\label{Sec 5}
In the last section of this paper, we prove Theorem \ref{Thm:stability}, and the ideas of the proof are from  \cite{harrach2019uniqueness,harrach2020monotonicity}.

\begin{proof}[Proof of Theorem \ref{Thm:baby stability}]
Let us divide the proof into several steps.

\vspace{2mm}

\noindent{\it Step 1. Fundamental estimates}

\vspace{2mm}

For $q_1 \not \equiv q_2$ in $\Omega$ with $q_1,q_2\in \Q$, we want to show that 
\begin{align}\label{claim 1 in the stability}
	\frac{\norm{(D^m\Lambda_{q_1})_0 -(D^m \Lambda_{q_2})_0}_{\ast}}{\norm{q_1-q_2}_{L^\infty(\Omega)}} \geq \inf_{
		\kappa \in \mathcal{K}
		}\sup_{\begin{subarray}{c}
		g,h\in C^\infty_c(\Omega_e), \\
		\norm{g}_{H^s}=\norm{h}_{H^s}=1	\end{subarray}}  \Phi(\kappa, g, h),
\end{align}
where $\Phi: \mathcal{K} \times C^\infty_c(\Omega_e)\times C^\infty(\Omega_e)$ is given by 
\[
\Phi(\kappa, g, h):= \max \left\{ \langle (D^m\Lambda_{\kappa})_0(g,\ldots, g), h \rangle \right\},
\]
and $\mathcal{K}=\left\{\kappa \in \mathrm{span} \Q: \ \norm{\kappa}_{L^\infty(\Omega)}=1 \right\}$ is a finite-dimensional subspace of $L^\infty(\Omega)$.
By the definition of $\norm{\cdot}_\ast$, we have 
\begin{align*}
	\norm{(D^m\Lambda_{q_1})_0 -(D^m \Lambda_{q_2})_0}_{\ast} = \sup_{\begin{subarray}{c}
		g,h\in C^\infty_c(\Omega_e), \\
		\norm{g}_{H^s}=\norm{h}_{H^s}=1
		\end{subarray}} \langle  (D^m\Lambda_{q_1})_0 -(D^m \Lambda_{q_2})_0(g,\ldots, g), h\rangle
\end{align*}
and 
\begin{align*}
	&\left| \langle  (D^m\Lambda_{q_1})_0 -(D^m \Lambda_{q_2})_0(g,\ldots, g), h\rangle \right| \\
	= & \max \left\{\langle  (D^m\Lambda_{q_1})_0 -(D^m \Lambda_{q_2})_0(g,\ldots, g), h\rangle , \right. \left.\langle  (D^m\Lambda_{q_2})_0 -(D^m \Lambda_{q_1})_0(g,\ldots, g), h\rangle  \right\} \\
	=&\norm{q_1-q_2}_{L^\infty(\Omega)} \max \left\{\int_{\Omega} \frac{q_1-q_2}{\norm{q_1-q_2}_{L^\infty(\Omega)} }v_g^m v_h \, dx,  , \int_{\Omega} \frac{q_2-q_1}{\norm{q_1-q_2}_{L^\infty(\Omega)} }v_g^m v_h \, dx \right\} \\
	=&\norm{q_1-q_2}_{L^\infty(\Omega)} \Phi\left(\frac{q_1-q_2}{\norm{q_1-q_2}_{L^\infty(\Omega)} }, g, h\right),
\end{align*}
where we have utilized the integral identity \eqref{integral identity} and the linearity of the $m$-th order derivative of the DN map $\Lambda_q$ in the above computations. Therefore, the claim \eqref{claim 1 in the stability} must hold.

\vspace{2mm}

\noindent{\it Step 2. Positive lower bound of $\Phi$}

\vspace{2mm}

We next show that there exists $\widehat{\kappa}\in \mathcal{K}$ such that 
\begin{align*}
	 \inf_{\kappa \in \mathcal{K}}\sup_{\begin{subarray}{c}
		g,h\in C^\infty_c(\Omega_e), \\
		\norm{g}_{H^s}=\norm{h}_{H^s}=1	\end{subarray}}  \Phi(\kappa,  g, h)= \sup_{\begin{subarray}{c}
		g,h\in C^\infty_c(\Omega_e), \\
		\norm{g}_{H^s}=\norm{h}_{H^s}=1	\end{subarray}}  \Phi(\widehat{\kappa}, g, h).
\end{align*}
The fact directly follows by the smoothness of the DN map (see Section \ref{Sec 2}) such that the function 
\[
\kappa \mapsto  \sup_{\begin{subarray}{c}
	g,h\in C^\infty_c(\Omega_e), \\
	\norm{g}_{H^s}=\norm{h}_{H^s}=1	\end{subarray}}  \Phi(\kappa, g, h)
\]
is lower semicontinuous and its minimum can be achieved over the compact set $\mathcal{K}$ (a finite dimensional subspace of $L^\infty(\Omega)$).

Finally, since $q_1 -q_2 \not \equiv 0$ in $\Omega$, by applying the localized potentials for $s$-harmonic functions (Corollary \ref{cor:localized_potentials}), there must exist $g,h\in H^s(\Omega_e)$ such that 
\begin{align}
	\text{either }\quad \int_{\Omega}\kappa v_g^m v_h \, dx >0 \quad \text{ or }\quad \int_{\Omega}\kappa v_g^m v_h \, dx <0,
\end{align}
where we have utilized the fact that $\kappa \in \mathrm{span} \Q$.
Hence, we can obtain 
\[
 c_0:=\sup_{\begin{subarray}{c}
	g,h\in C^\infty_c(\Omega_e), \\
	\norm{g}_{H^s}=\norm{h}_{H^s}=1	\end{subarray}}  \Phi(\kappa, g, h)>0, \quad \text{ for any }\kappa \in\mathcal{K},
\]
which completes the proof.
\end{proof}

It remains to prove our last theorem in the paper.

\begin{proof}[Proof of Theorem \ref{Thm:stability}]
	By using 
	\begin{align*}
	  &\left\| P_{H_\ell}'  \left( (D^m\Lambda_{q_2})_0-(D^m\Lambda_{q_1})_0 \right)P_{H_\ell}\right\|_{\ast} \\
	  =&\sup_{g,h\in H_\ell} \left| \langle \left( (D^m\Lambda_{q_2})_0-(D^m\Lambda_{q_1})_0 \right)(g,\ldots, g), h  \rangle \right|,
	\end{align*}
	and applying the preceding arguments, for any $\ell \in \N$, there exists $\kappa_\ell\in \mathcal{K}$ such that 
	\begin{align}\label{proof 1 in the stability}
		\frac{\left\| P_{H_\ell}'  \left( (D^m\Lambda_{q_2})_0-(D^m\Lambda_{q_1})_0 \right)P_{H_\ell}\right\|_{\ast}}{\norm{q_1-q_2}_{L^\infty(\Omega)}} \geq \sup_{\begin{subarray}{c}
			g,h\in H_\ell, \\
			\norm{g}_{H^s}=\norm{h}_{H^s}=1 
			\end{subarray}}\Phi(\kappa_\ell,g,h).
	\end{align}
	Notice that the right hand side of \eqref{proof 1 in the stability} is monotonically increasing in $\ell \in \N$, since $H_1\subseteq H_2 \subseteq \ldots \subseteq H_\ell \subset \ldots \subseteq H^s(\Omega_e)$. Therefore, Theorem \ref{Thm:stability} holds if we can prove that there is $\ell \in \N$ such that 
	\begin{align}\label{proof 2 in the stability}
	\sup_{\begin{subarray}{c}
		g,h\in H_\ell, \\
		\norm{g}_{H^s}=\norm{h}_{H^s}=1 
		\end{subarray}}\Phi(\kappa, g,h) >0 , \quad \text{ for all }\kappa \in \mathcal{K}.
	\end{align}
	We prove the claim \eqref{proof 2 in the stability} by a contradiction argument, i.e., there must exist a sequence $(\kappa_\ell)_{\ell\in \N}\subset \mathcal{K}$ such that  
	\begin{align*}
	\sup_{\begin{subarray}{c}
		g,h\in H_\ell, \\
		\norm{g}_{H^s}=\norm{h}_{H^s}=1 
		\end{subarray}}\Phi(\kappa_\ell,g,h) \leq 0 , \quad \text{ for } \ell \geq m,
	\end{align*}
	for any $m\in \N$.
	After passing a subsequence (if necessary), by the compactness (due to the finite dimensional assumption of $\Q$), we can assue that there exists an element $\kappa_\infty\in \mathcal{K}$ such that $\displaystyle\lim_{\ell \to \infty} \kappa_\ell =\kappa _\infty$ and 
	\[
	\sup_{\begin{subarray}{c}
		g,h\in H_m, \\
		\norm{g}_{H^s}=\norm{h}_{H^s}=1 
		\end{subarray}}\Phi(\kappa_\infty, g,h) \leq \lim_{\ell \to \infty}\sup_{\begin{subarray}{c}
		g,h\in H_m, \\
		\norm{g}_{H^s}=\norm{h}_{H^s}=1 
		\end{subarray}}\Phi(\kappa_\ell, g,h) \leq 0,  
	\]
	where we have utilized the lower semicontinuous of the function 
	$$\kappa \to \sup_{\begin{subarray}{c}
		g,h\in H_\ell, \\
		\norm{g}_{H^s}=\norm{h}_{H^s}=1 
		\end{subarray}}\Phi(\kappa, g,h) .
	$$
	On the other hand, by the continuity, we must have 
	\[
	\Phi(\kappa_\infty,g,h)\leq 0, \quad \text{ for all }g,h\in \overline{\bigcup_{m\in \N}H_m}=H^s(\Omega_e),
	\]
	which contradicts to \eqref{claim 1 in the stability} in the previous proof. This proves that \eqref{proof 2 in the stability} must hold for some $\ell \in \N$ as desired.
\end{proof}

\appendix 

\section{The $L^p$-estimate for the fractional Laplacian}\label{Appendix} 
Let us review the other estimates for solutions to the fractional Laplacian: The $L^p$ estimate. 
Before doing so, let us recall some fundamental properties for the \emph{Riesz potential}.

\begin{prop}[Riesz potential]\label{Prop: Riesz potential}
	For $0<s<1$ with $n>2s$. Let $V$ and $F$ satisfy 
	\begin{align*}
		V=(-\Delta)^{-s}F \text{ in }\R^n,
	\end{align*}
	in the sense that $V$ is the Riesz potential of order $2s$ of the function $F$. 
	\begin{itemize}
		\item[(a)] If $F\in L^1(\R^n)$, then there exists a constant $C>0$ depending only on $n$ and $s$ such that 
		\begin{align*}
			\norm{V}_{L^p_{\mathrm{w}}(\R^n)} \leq C\norm{F}_{L^1(\R^n)}, 
		\end{align*}
		where $L^p_{\mathrm{w}}$ denotes the weak-$L^p$ norm and $p=\frac{n}{n-2s}$.
		
		\item[(b)] For $r\in (1,\frac{n}{2s})$, $F\in L^r(\R^n)$, then there exists a constant $C>0$ depending only on $n$, $s$, and $r$ such that 
		\begin{align*}
		\norm{V}_{L^p(\R^n)} \leq C\norm{F}_{L^r(\R^n)}, 
		\end{align*}
		where $p=\frac{nr}{n-2rs}$.
		
		\item[(c)] For $r\in (\frac{n}{2s}, \infty)$, then there exists a constant $C>0$ depending only on $n$, $s$, and $r$ such that 
		\begin{align*}
			[u]_{C^\alpha(\R^n)}\leq C\norm{F}_{L^r(\R^n)},
		\end{align*}
		where $\alpha=2s-\frac{n}{p}$ and $[u]_{C^\alpha (\R^n)}$ is the seminorm given in Section \ref{Sec 2}.
	\end{itemize}
\end{prop}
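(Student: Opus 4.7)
The proposition collects three classical mapping properties of the Riesz potential $I_{2s}F(x)=c\int_{\R^n}|x-y|^{-(n-2s)}F(y)\,dy$, and the plan is to prove all three by variants of the same \emph{splitting trick} combined with the Hardy--Littlewood maximal function. The starting observation for all parts is that, for any $R>0$,
\begin{equation*}
|V(x)|\;\lesssim\;\int_{|x-y|<R}\frac{|F(y)|}{|x-y|^{n-2s}}\,dy+\int_{|x-y|\ge R}\frac{|F(y)|}{|x-y|^{n-2s}}\,dy=:A_R(x)+B_R(x),
\end{equation*}
and the near piece $A_R$ is controlled by the maximal function via a dyadic annular decomposition $\{R/2^{k+1}\le|x-y|<R/2^{k}\}$, yielding $A_R(x)\lesssim R^{2s}\,MF(x)$; the far piece $B_R$ is handled by H\"older's inequality, giving different bounds depending on the integrability of $F$. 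The remaining work in each part is then to choose $R=R(x)$ optimally and invoke the appropriate mapping property of $M$.

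For part (a), H\"older on $B_R$ is just a trivial bound: $B_R(x)\le R^{2s-n}\|F\|_{L^1(\R^n)}$. Combining with $A_R(x)\lesssim R^{2s}MF(x)$ and optimizing by taking $R=(\|F\|_{L^1}/MF(x))^{1/n}$ yields the pointwise bound $|V(x)|\lesssim MF(x)^{(n-2s)/n}\|F\|_{L^1}^{2s/n}$. Then the weak-$(1,1)$ bound for $M$ (via Vitali covering) gives, with $p=n/(n-2s)$,
\begin{equation*}
|\{|V|>t\}|\;\le\;\bigl|\{MF>c\,t^{p/1}\|F\|_{L^1}^{1-p}\}\bigr|\;\lesssim\;\frac{\|F\|_{L^1}^{p}}{t^{p}},
\end{equation*}
which is exactly the claimed weak-$L^p$ estimate. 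For part (b), one repeats the splitting but now bounds $B_R$ by H\"older with exponent $r$: $B_R(x)\le \|F\|_{L^r}\bigl(\int_{|z|\ge R}|z|^{-(n-2s)r'}\,dz\bigr)^{1/r'}\lesssim R^{2s-n/r}\|F\|_{L^r}$ (the exponent $(n-2s)r'>n$ requires $r<n/(2s)$, which is exactly the hypothesis). Optimizing $R$ yields
\begin{equation*}
|V(x)|\;\lesssim\;MF(x)^{1-2sr/n}\,\|F\|_{L^r}^{2sr/n},
\end{equation*}
and raising to the $p$th power with $p=nr/(n-2rs)$ and using the strong-$(r,r)$ boundedness of $M$ on $L^r(\R^n)$ for $r>1$ gives $\|V\|_{L^p(\R^n)}\lesssim\|F\|_{L^r(\R^n)}$. (Alternatively, this step can be phrased as Marcinkiewicz interpolation between the weak-$(1,p_0)$ estimate from (a) and a suitable weak-$(r_0,\infty)$ endpoint.)

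For part (c), where now $r>n/(2s)$ so that $I_{2s}$ gains H\"older regularity, the plan is to estimate $V(x)-V(y)$ directly. Writing $\rho=|x-y|$ and splitting the integration against $|F|$ over $B_{2\rho}(x)\cup B_{2\rho}(y)$ and its complement, one bounds the local contribution by H\"older:
\begin{equation*}
\int_{B_{2\rho}(x)}\frac{|F(z)|}{|x-z|^{n-2s}}\,dz\;\le\;\|F\|_{L^r}\Bigl(\int_{B_{2\rho}(x)}|x-z|^{-(n-2s)r'}\,dz\Bigr)^{1/r'}\;\lesssim\;\rho^{2s-n/r}\|F\|_{L^r},
\end{equation*}
(and the same for the ball around $y$), while on the far region one exploits the mean value bound $\bigl||x-z|^{-(n-2s)}-|y-z|^{-(n-2s)}\bigr|\lesssim\rho\,|x-z|^{-(n-2s+1)}$ and again H\"older, yielding the same order $\rho^{2s-n/r}$. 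This gives $|V(x)-V(y)|\lesssim|x-y|^{\alpha}\|F\|_{L^r}$ with $\alpha=2s-n/r$, which is the claimed seminorm bound (with the exponent as stated, reading $n/p$ as $n/r$).

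The main obstacle is the endpoint weak-type estimate in (a): the H\"older trick fails to give a strong $L^1\to L^p$ bound (and indeed none holds), so one must route through the maximal function and its weak-$(1,1)$ inequality, which carries the only genuinely non-elementary tool in the proof. Once (a) is in place, parts (b) and (c) are essentially exponent bookkeeping built on the same decomposition.
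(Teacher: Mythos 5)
Your proposal is correct in substance, but note that the paper does not actually prove this proposition: it disposes of (a) and (b) by citing Stein's book (Chapter V) and of (c) by citing Stein p.~164 and Garc\'ia-Cuerva--Gatto. What you wrote is essentially a self-contained rendition of exactly the argument in those references: the near/far splitting with the near piece controlled by $R^{2s}MF(x)$ via dyadic annuli, the far piece by H\"older, optimization in $R$ (Hedberg's trick), and then the weak-$(1,1)$ bound for $M$ in part (a) and the strong-$(r,r)$ bound in part (b); your exponent bookkeeping in both parts checks out. Two small caveats. First, in (c) the far-field estimate based on $\bigl||x-z|^{-(n-2s)}-|y-z|^{-(n-2s)}\bigr|\lesssim \rho\,|x-z|^{-(n-2s+1)}$ requires $(n-2s+1)r'>n$ for convergence at infinity, which is equivalent to $\alpha=2s-n/r<1$; this restriction is implicit in the statement (the paper's H\"older seminorm in Section \ref{Sec 2} is defined only for $0<\alpha<1$, and in the application the paper takes $\beta=\min\{s,\,2s-n/r\}$), but you should say so explicitly, since for $s>1/2$ and $r$ large one can have $2s-n/r\geq 1$ and the mean-value argument as written fails. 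Second, you correctly read the statement's exponent $\alpha=2s-n/p$ as $2s-n/r$ (the $p$ there is a typo, as is the appearance of $u$ in place of $V$), which is consistent with how the result is invoked in Proposition \ref{Prop: Lp estimate}.
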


\begin{proof}
	Parts (a) and (b) are classical results for the Riesz potential, and the proof can be found in Stein's book \cite[Chapter V]{stein2016singular}. For (c), we refer readers to \cite[p.164]{stein2016singular} and \cite{garcia2004boundedness}.
\end{proof}

Furthermore, we have the following H\"older estimate for the fractional Laplacian, which was shown in \cite[Proposition 1.7]{RS2014extremal}. We state the result in the following proposition and the proof can be found in \cite{RS2014extremal}.

\begin{prop}[$C^\beta$-estimate]\label{Prop:exterior Holder in appendix}
	For $n\geq 1$, $0<s<1$, let $\Omega\subseteq\mathbb{R}^{n}$
	be a bounded domain with $C^{1,1}$ boundary $\p \Omega$. Let $h\in C^\alpha(\Omega_e)$ for some $\alpha \in (0,1)$. Let $w$ be the solution of 
	\begin{align*}
		\begin{cases}
		(-\Delta)^s w=0 & \text{ in }\Omega, \\
		w=h & \text{ in }\Omega_e.
		\end{cases}
	\end{align*}
	Then the solution $w\in C^\beta (\R^n)$, where $\beta =\min \{s,\alpha \}$, and 
	\begin{align*}
		\norm{w}_{C^\beta(\R^n)}\leq C\norm{h}_{C^\alpha(\Omega_e)},
	\end{align*}
	for some constant $C>0$ depending only on $\Omega$, $\alpha$, and $s$.
\end{prop}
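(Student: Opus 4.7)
The plan is to reduce the exterior Dirichlet problem for $w$ to a nonhomogeneous interior problem with zero exterior data, and then invoke the boundary $C^s$-regularity result of \cite{ros2014dirichlet} already used in Proposition~\ref{Prop:well posedness}. First, I would choose a compactly supported extension $\widetilde h \in C^\alpha(\R^n)$ of $h$ with $\norm{\widetilde h}_{C^\alpha(\R^n)} \leq C\norm{h}_{C^\alpha(\Omega_e)}$, obtained via a Whitney-type extension adapted to the $C^{1,1}$ domain $\Omega$. Setting $v := w - \widetilde h$, one has $v = 0$ in $\Omega_e$ and
\begin{equation*}
(-\Delta)^s v = -(-\Delta)^s \widetilde h =: F \qquad \text{in } \Omega.
\end{equation*}
It then suffices to prove $\norm{v}_{C^\beta(\R^n)} \leq C\norm{h}_{C^\alpha(\Omega_e)}$, because adding $\widetilde h$ back yields the target estimate (with the minimum of the two exponents arising since $\widetilde h$ has regularity $\alpha$ while $v$ will have regularity at most $s$).

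In the regime $\alpha > 2s$, the estimate on $F$ is straightforward. Splitting the principal-value integral for $(-\Delta)^s\widetilde h(x)$ at $|x-y|=1$, the near-field contribution is bounded because $|\widetilde h(x)-\widetilde h(y)| \leq \norm{\widetilde h}_{C^\alpha}|x-y|^\alpha$ produces an integrable singularity $|x-y|^{\alpha-n-2s}$, while the far-field contribution is dominated by $\norm{\widetilde h}_{L^\infty}$. This yields $\norm{F}_{L^\infty(\Omega)} \leq C\norm{h}_{C^\alpha(\Omega_e)}$, and Proposition~1.1 of \cite{ros2014dirichlet}, applied exactly as in the proof of Proposition~\ref{Prop:well posedness}, gives $\norm{v}_{C^s(\R^n)} \leq C\norm{F}_{L^\infty(\Omega)}$. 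Here $\beta = s$ and the conclusion follows.

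The complementary regime $\alpha \leq 2s$ is where the genuine difficulty lies; $\beta = \alpha$ in this case, so one cannot afford to lose to $C^s$ regularity, and the source $F$ is no longer in $L^\infty$, so the route above fails. The cleanest approach is to use the Poisson-kernel representation
\begin{equation*}
w(x) = \int_{\Omega_e} P_\Omega(x,y)\, h(y)\,dy, \qquad x\in\Omega,
\end{equation*}
together with the two-sided pointwise bounds on $P_\Omega$ available for $C^{1,1}$ domains (the Kulczycki and Chen--Song estimates). Decomposing $h(y) = h(y_x) + (h(y)-h(y_x))$ at a nearest boundary point $y_x$ of each $x\in\Omega$, the constant term contributes nothing to the modulus of continuity, and the remainder is integrated against $|y-y_x|^\alpha\norm{h}_{C^\alpha}$ times the kernel, producing the $C^\alpha$ modulus up to $\partial\Omega$. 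An alternative is a blow-up/compactness argument: if the estimate fails along a sequence, one rescales to extract a limiting $s$-harmonic function in a half-space with $C^\alpha$ exterior data vanishing at the origin, and applies a Liouville-type theorem.

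The main obstacle is precisely this case $\alpha \leq 2s$, where one must either deploy sharp Poisson-kernel estimates on $C^{1,1}$ domains or a blow-up argument with a half-space Liouville theorem; this is exactly why the paper defers the full proof to \cite{RS2014extremal}. Once the boundary estimate is in hand, interior regularity of $w$ away from $\partial\Omega$ is automatic from local estimates for $s$-harmonic functions, and the exterior regularity of $w$ matches the data by hypothesis, so the overall Hölder exponent is $\beta=\min\{s,\alpha\}$ with the stated norm bound.
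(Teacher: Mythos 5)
First, a point of comparison: the paper does not actually prove this proposition --- it is quoted from \cite[Proposition 1.7]{RS2014extremal} with the proof deferred to that reference --- so your attempt has to stand as a self-contained argument. Judged that way, the only part you fully carry out is the regime $\alpha>2s$ (which forces $s<1/2$, since $\alpha<1$): there the compactly supported extension $\widetilde h\in C^\alpha(\R^n)$, the estimate $\norm{(-\Delta)^s\widetilde h}_{L^\infty}\leq C\norm{h}_{C^\alpha(\Omega_e)}$ from splitting the integral at $|x-y|=1$, and the reduction of $v=w-\widetilde h$ to \cite[Proposition 1.1]{ros2014dirichlet} are all correct and give $\beta=s$ in that case.

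For the main case $\alpha\leq 2s$, however, you do not give a proof: you name two possible strategies (sharp Poisson-kernel bounds on $C^{1,1}$ domains, or a blow-up argument with a half-space Liouville theorem) and then defer, which is to restate the content of the cited result rather than to establish it. The sketch also contains concrete inaccuracies. The ``constant'' term $h(y_x)\int_{\Omega_e}P_\Omega(x,y)\,dy=h(y_x)$ does depend on $x$ through the nearest boundary point $y_x$, so it does contribute to the modulus of continuity; one needs the Lipschitz character of the nearest-point projection near a $C^{1,1}$ boundary to absorb it. The key kernel computation, namely $\int_{\Omega_e}P_\Omega(x,y)\,|y-y_x|^\alpha\,dy\leq C\,\delta(x)^{\min\{\alpha,s\}}$ with $\delta(x)=\dist(x,\p\Omega)$ --- which is exactly where the exponent $\min\{s,\alpha\}$ is produced --- is never performed. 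And even granting a boundary growth bound $|w(x)-h(y_x)|\leq C\norm{h}_{C^\alpha(\Omega_e)}\delta(x)^\beta$, the passage to the global bound $\norm{w}_{C^\beta(\R^n)}\leq C\norm{h}_{C^\alpha(\Omega_e)}$ is not ``automatic'': it requires combining the boundary decay with interior H\"older estimates for $s$-harmonic functions at scale $\delta(x)$ through the standard gluing lemma, which you do not set up. So the proposal is a correct reduction plus a plan; to close it you must either execute the Poisson-kernel computation and gluing argument in full, or simply cite \cite[Proposition 1.7]{RS2014extremal} as the paper does.
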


The following proposition was also proved in \cite{RS2014extremal}, which is an important result in the proof of our Runge approximation (Theorem \ref{Thm:runge}). We state the result and prove it for the sake of completeness. The proof is based on the preceding properties of the Riesz potential, the maximum principle for the fractional Laplacian and the $C^\beta$-estimate (Proposition \ref{Prop:exterior Holder in appendix}).

\begin{prop}\label{Prop: Lp estimate}
	For $n\geq 1$, $0<s<1$, let $\Omega\subseteq\mathbb{R}^{n}$
	be a bounded domain with $C^{1,1}$ boundary $\p \Omega$. For $F\in L^r (\Omega)$, let $v$ be the solution of 
	\begin{align}\label{exterior value problem in appendix}
	\begin{cases}
	(-\Delta)^s v =F & \text{ in }\Omega, \\
	v=0 & \text{ in }\Omega_e,
	\end{cases}
	\end{align}
	then we have:
	\begin{itemize}
		\item[(a)] Let $n>2s$, $r=1$, and $p\in [1, \frac{n}{n-2s})$ be an arbitrary number,  then there exists a constant $C>0$ independent of $v$ and $F$  such that 
		\begin{align*}
		\norm{v}_{L^p(\Omega)}\leq C\norm{F}_{L^1(\Omega)}.
		\end{align*}
		
		\item[(b)] Let $n>2s$, $r\in (1,\frac{n}{2s})$ and $p=\frac{nr}{n-2rs}$, then there exists a constant $C>0$ independent of $v$ and $F$ such that 
		\begin{align*}
		\norm{v}_{L^p(\Omega)}\leq C\norm{F}_{L^p (\Omega)}.
		\end{align*}
		
		\item[(c)] Let $n>2s$, $r\in (\frac{n}{2s},\infty)$, and $\beta=\min \left\{s, 2s-\frac{n}{r} \right\}$, then there exists a constant $C>0$ independent of $v$ and $F$ such that 
		\begin{align*}
		\norm{v}_{C^\beta(\Omega)}\leq C\norm{F}_{L^r (\Omega)}.
		\end{align*}
		
		\item[(d)] Let $n=1$, $s\in [\frac{1}{2},1)$, $r\geq 1$, and any $p<\infty$, then there exists a constant $C>0$ independent of $v$ and $F$  such that 
		\begin{align*}
		\norm{v}_{L^p(\Omega)}\leq C\norm{F}_{L^r (\Omega)}.
		\end{align*}
	\end{itemize}
\end{prop}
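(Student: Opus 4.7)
The plan is to reduce each of the four estimates to the mapping properties of the Riesz potential on $\R^n$ (Proposition \ref{Prop: Riesz potential}), using the maximum principle to transfer a whole-space bound on $(-\Delta)^{-s}|F|$ to the Dirichlet solution $v$ on $\Omega$, and using the exterior $C^\beta$-estimate (Proposition \ref{Prop:exterior Holder in appendix}) to upgrade to Hölder regularity in case (c).

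The first step, common to (a), (b), and (c), is a pointwise comparison. Extend $F$ by zero to $\R^n$ and, assuming first $F\geq 0$, define $\widetilde V:=(-\Delta)^{-s}F\geq 0$. Because $\widetilde V-v$ is $s$-harmonic in $\Omega$ with nonnegative exterior values $\widetilde V|_{\Omega_e}\geq 0$, the maximum principle (Appendix \ref{Appendix max}) gives $0\leq v\leq \widetilde V$ in $\Omega$. Splitting $F=F^+-F^-$ and using linearity yields the pointwise estimate
\[
|v(x)|\leq \bigl((-\Delta)^{-s}|F|\bigr)(x) \qquad \text{for a.e. } x\in\R^n.
\]
From here, (b) follows immediately by Proposition \ref{Prop: Riesz potential}(b) applied to $|F|\in L^r(\R^n)$. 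For (a), Proposition \ref{Prop: Riesz potential}(a) yields $(-\Delta)^{-s}|F|\in L^{n/(n-2s)}_{\mathrm{w}}(\R^n)$; since $\Omega$ is bounded, the weak-$L^{n/(n-2s)}$ norm controls the $L^p(\Omega)$ norm for every $p<n/(n-2s)$ via the standard layer-cake/Hölder embedding on bounded sets, giving the claim.

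For (c), a pointwise bound is not enough to produce a Hölder modulus of continuity, so a different decomposition is used. Write $v=\widetilde V - w$, where $w\in H^s(\R^n)$ denotes the $s$-harmonic extension into $\Omega$ of the exterior trace $\widetilde V|_{\Omega_e}$, i.e.\ $(-\Delta)^s w=0$ in $\Omega$ with $w=\widetilde V$ on $\Omega_e$. By Proposition \ref{Prop: Riesz potential}(c), since $r>n/(2s)$, we have $\widetilde V\in C^{\alpha}(\R^n)$ with $\alpha=2s-n/r$ and $[\widetilde V]_{C^{\alpha}(\R^n)}\leq C\norm{F}_{L^r(\Omega)}$. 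Applying Proposition \ref{Prop:exterior Holder in appendix} to $w$ with exterior data in $C^{\alpha}(\Omega_e)$ gives $w\in C^{\beta}(\R^n)$ with $\beta=\min\{s,\alpha\}$ and $\norm{w}_{C^\beta(\R^n)}\leq C\norm{\widetilde V}_{C^{\alpha}(\Omega_e)}\leq C\norm{F}_{L^r(\Omega)}$. Adding the two contributions yields $v\in C^\beta(\overline\Omega)$ with the stated bound.

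For (d) the Riesz potential argument breaks down because $n=1$ and $s\geq 1/2$ place us in the regime $n\leq 2s$, so the kernel defining $(-\Delta)^{-s}$ is not locally integrable in the appropriate sense and Proposition \ref{Prop: Riesz potential}(a)--(b) do not apply. The remedy is to exploit the compact support of both $F$ and $v$: since $v\in\widetilde H^s(\Omega)$ is the energy solution, duality together with $\|F\|_{H^{-s}(\Omega)}\leq C\|F\|_{L^r(\Omega)}$ (valid on bounded $\Omega$ for every $r\geq 1$ because $L^r(\Omega)\hookrightarrow L^1(\Omega)\hookrightarrow H^{-s}(\Omega)$ in one space dimension with $s\geq 1/2$) yields $\norm{v}_{H^s(\R)}\leq C\norm{F}_{L^r(\Omega)}$; the Sobolev embedding $H^s(\R)\hookrightarrow L^p(\R)$ is valid for every $p<\infty$ in this range, and the compact support of $v$ finishes the estimate. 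This low-dimensional case is where the Riesz-potential-plus-maximum-principle framework has to be supplemented by an energy/Sobolev argument, and is the main technical obstacle of the proof; the other three items fall out of the unified comparison scheme described above.
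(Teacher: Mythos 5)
Parts (a)--(c) of your proposal follow essentially the same route as the paper: the maximum-principle comparison $|v|\le (-\Delta)^{-s}|F|$ combined with the Riesz potential bounds for (a) and (b) (with the weak-$L^{n/(n-2s)}\hookrightarrow L^p(\Omega)$ step on the bounded domain), and the splitting of $v$ into the Riesz potential $\widetilde V$ plus the $s$-harmonic correction with exterior data $\widetilde V|_{\Omega_e}$ for (c). One small point in (c): Proposition \ref{Prop: Riesz potential}(c) only gives the H\"older \emph{seminorm} of $\widetilde V$, while Proposition \ref{Prop:exterior Holder in appendix} needs the full $C^\alpha(\Omega_e)$ norm; you should add the sup bound on $\widetilde V$, which follows either from the decay of the Riesz potential at infinity (as the paper notes, since $F$ is compactly supported) or directly from H\"older's inequality using $r>\frac{n}{2s}$.

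Part (d) is where you genuinely depart from the paper, and where there is a gap. The paper treats $n=1\le 2s$ via the explicit Green function of the interval (Blumenthal et al.): $G(\cdot,y)\in L^\infty(\Omega)$ for $s>\frac12$ and $G(\cdot,y)\in L^q(\Omega)$ for every $q<\infty$ at $s=\frac12$, so Minkowski's integral inequality gives $\norm{v}_{L^p(\Omega)}\le \sup_y\norm{G(\cdot,y)}_{L^p(\Omega)}\norm{F}_{L^1(\Omega)}$. Your energy/duality argument works for $s>\frac12$ with any $r\ge1$, and also for $s=\frac12$ with $r>1$ (pair $F$ with $H^{1/2}(\R)\hookrightarrow L^{r'}(\R)$ rather than with $L^\infty$), but it fails in the borderline case $s=\frac12$, $r=1$, which is included in the statement: the claimed chain $L^1(\Omega)\hookrightarrow H^{-1/2}(\Omega)$ is false because $H^{1/2}(\R)\not\hookrightarrow L^\infty(\R)$; indeed mollified point masses $F_\eps$ have $\norm{F_\eps}_{L^1}=1$ while $\norm{F_\eps}_{H^{-1/2}(\R)}\sim (\log(1/\eps))^{1/2}\to\infty$. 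Moreover it is not clear that the solution with merely $L^1$ data lies in $H^{1/2}$ at all, so this case cannot be patched inside the energy framework; you need a separate argument there, e.g.\ the paper's Green-function bound (logarithmic singularity, hence in every $L^q$, $q<\infty$) or a duality argument at the level of $L^p$ spaces.
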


\begin{proof}
	(a) Let us extend the function $F$ by $0$ outside $\Omega$, and we still denote the function as $F$. Let $V$ be the solution of 
	\[
	(-\Delta)^s V=|F| \text{ in }\R^n,
	\]
	so that $V=(-\Delta)^{-s}|F|$ in $\R^n$, where $(-\Delta)^{-s}|F|$ is the Riesz potential of $|F|$.
	By the definition of the Riesz potential, we have $V\geq 0$ in $\Omega_e$. Via the maximum principle, we obtain that $|v|\leq V$ in $\Omega$. By applying Proposition \ref{Prop: Riesz potential}, one can see that 
	\begin{align*}
		\norm{v}_{L^q_{\mathrm{w}}(\Omega)}\leq \norm{V}_{L^q_{\mathrm{w}}(\Omega)}\leq C\norm{F}_{L^1(\Omega)},
	\end{align*}
	if $F\in L^1(\Omega)$ and for some constant $C>0$ independent of $v$ and $F$. Thus, one has 
	\begin{align*}
		\norm{v}_{L^r(\Omega)} \leq C\norm{F}_{L^1(\Omega)},
	\end{align*}
	for some constant $C>0$ independent of $v$ and $F$. This proves (a).

	(b) Similarly, the proof of (b) can be completed by using the result (b) in Proposition \ref{Prop: Riesz potential} and the maximum principle for the fractional Laplacian as before.
	Furthermore, when $r=\frac{n}{2s}$, it is easy to see that $F\in L^{r}(\Omega)\subset L^{\wt r}(\Omega)$, for any $\wt r\in [1,r]$ (since $\Omega$ is bounded). We still have the $L^p$ estimate for the solution in the borderline case $r=\frac{n}{2s}$.

   (c) Let us write $v=\wt v+ w$, where $\wt v$ and $w$ are given by 
   \begin{align}\label{proof Cbeta in appendix 1}
   	\wt v =(-\Delta)^{-s}F \text{ in }\R^n, 
   \end{align}
   and 
   \begin{align}\label{proof Cbeta in appendix 2}
   	\begin{cases}
      (-\Delta)^s w =0 & \text{ in }\Omega, \\
      w=\wt v & \text{ in }\Omega_e.
   	\end{cases}
   \end{align}
   By using \eqref{proof Cbeta in appendix 1} and Proposition \ref{Prop: Riesz potential} (c), there exists a constant $C>0$ depending only on $n$, $s$, and $r$ such that 
   \begin{align*}
   	[\wt v]_{C^\alpha(\R^n)}\leq C\norm{F}_{L^r(\R^n)}, \quad \text{ where }\alpha=2s-\frac{n}{r}.
   \end{align*}
   Since $\Omega$ is bounded and $F$ is compactly supported, one has $\wt v$ decays at infinity. This implies that 
   \begin{align}\label{proof Cbeta in appendix 3}
   	\norm{\wt v}_{C^\alpha(\R^n)}\leq C\norm{F}_{L^r(\R^n)}, \quad \text{ where }\alpha=2s-\frac{n}{r},
   \end{align}
   for some constnat $C>0$ depending only on $n$, $s$, $r$ and $\Omega$.
   
    On the other hand, we can apply Proposition \ref{Prop:exterior Holder in appendix} to derive the H\"older estimate for the solution $w$ of \eqref{proof Cbeta in appendix 2} that 
    \begin{align}\label{proof Cbeta in appendix 4}
    	\norm{w}_{C^\beta (\R^n)}\leq C\norm{\wt v}_{C^\alpha(\Omega_e)},
    \end{align}
    for some constant $C>0$ depending only on $\Omega$, $\alpha$, and $s$, where 
    $$
    \beta=\min\{\alpha , s\}=\min\left\{s,  2s-\frac{n}{r}\right\}.
    $$ 
    Combining with \eqref{proof Cbeta in appendix 3} and \eqref{proof Cbeta in appendix 4}, we can obtain the H\"older estimate for the solution $v=\wt v+w$ such that (c) holds. Moreover, since $v\in C^\beta(\overline{\Omega})$ with $v=0$ in $\Omega_e$, we must have $v\in L^p(\R^n)$ for any $p\geq 1$.

	(d) Notice that for $s<\frac{1}{2}$, we have $n=1>2s$ automatically, such that the case (d) holds by applying the results either (a) or (b). On the other hand, for the case $1=n\leq 2s$, this implies that $\frac{1}{2}\leq s <1$. Under this situation, any bounded domain is of the form $\Omega=(a,b)\subset \R$. By \cite{blumenthal1961distribution}, the Green function $G(x,y)$ for the exterior value problem \eqref{exterior value problem in appendix} is explicit. Furthermore, $G(\cdot,y)\in L^\infty(\Omega)$ when $s>\frac{1}{2}$ and $G(x,y)\in L^r(\Omega)$ for any $r<\infty$ when $s=\frac{1}{2}$. Therefore, one has 
	\[
	\norm{v}_{L^\infty(\Omega)}\leq C\norm{F}_{L^1(\Omega)},
	\]
	for some constant $C>0$ independent of $v$ and $F$, where $n<2s$. For the case $n=2s$, we have either
	\[
	 \norm{v}_{L^p(\Omega)}\leq C\norm{F}_{L^1(\Omega)}, \quad \text{ for all }p<\infty, 
	 \]
	 or
	 \[
	  \norm{v}_{L^\infty(\Omega)}\leq C\norm{F}_{L^r(\Omega)}, \quad \text{ for }r>1,
	\]
	for some constant $C>0$ independent of $v$ and $F$. 
\end{proof}

\begin{rmk}\label{Rmk of Lp estimate}
	From the $L^p$ estimate of $s$-harmonic functions, we have:
	\begin{itemize}
		\item[(a)] 	No matter what exponent $r\geq 1$ and what space dimension $n$ are, for any $F\in L^r(\Omega)$ with $\Omega\subset \R^n$ in the statement of Proposition \ref{Prop: Lp estimate}, then we can always conclude that the solution $v$ of \eqref{exterior value problem in appendix} must belong to $L^p(\R^n)$, for some $p> 1$. 
		
		\item[(b)] Moreover, since the domain $\Omega$ is bounded in $\R^n$, then we can confine the exponent $p$ in the region $p \in (1,2)$. The condition $p \in (1,2)$ plays an essential role in order to prove Proposition \ref{Prop: strong uniqueness} (see \cite[Section 4]{CMR2020unique} for more detailed discussions about the strong uniqueness of the $s$-harmonic function). Meanwhile, we also need to use the $L^p$-estimate to prove the Runge approximation via the strong uniqueness for the fractional Laplacian in Section \ref{Sec 3}.
	\end{itemize}
\end{rmk}

\section{The maximum principle}\label{Appendix max}

We review the known maximum principle for the fractional Laplacian in the end of this work. These results were shown in \cite{ros2015nonlocal,bucur2016nonlocal} for the fractional Laplacian equation and \cite{lai2019global,LL2020inverse} for the fractional Schr\"odinger equation. For the sake of convenience, we state the results as follows.

\begin{prop}[The maximum principle]\label{Prop: max principle}
	Let $\Omega\subset \R^n$, $n\geq 1$ be a bounded domain with Lipschitz boundary $\p\Omega$, and $0<s<1$.  Let $v\in H^s(\R^n)$ be the unique solution of 
	\begin{align*}
	\begin{cases}
	(-\Delta)^s v=F & \text{ in }\Omega, \\
	v=g &\text{ in }\Omega_e.
	\end{cases}
	\end{align*} 
	Suppose that $0\leq F\in L^\infty(\Omega)$ in $\Omega$ and $0\leq g \in L^\infty(\Omega_e)$ in $\Omega_e$. Then $v\geq 0$ in $\Omega$. Moreover, if $g\not \equiv 0$ in $\Omega_e$, then $v>0$ in $\Omega$. 
\end{prop}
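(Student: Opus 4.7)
The plan is to first establish the weak conclusion $v\geq 0$ by an energy argument testing against the negative part $v^{-}:=\max(-v,0)$, and then to upgrade to strict positivity using an explicit Poisson kernel representation.

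First I would observe that since $v=g\geq 0$ in $\Omega_e$, the truncation $v^{-}$ vanishes outside $\Omega$, so $v^{-}\in\widetilde{H}^s(\Omega)$ is an admissible test function (Stampacchia-type stability of $H^s$ under truncation). Inserting $v^{-}$ into the weak form of $(-\Delta)^s v=F$ and using the Parseval identity gives
\[
\frac{c_{n,s}}{2}\iint_{\R^n\times\R^n} \frac{(v(x)-v(y))(v^{-}(x)-v^{-}(y))}{|x-y|^{n+2s}}\,dx\,dy \;=\; \int_{\Omega} F\,v^{-}\,dx.
\]
The right-hand side is non-negative since $F,v^{-}\geq 0$. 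For the left-hand side, a pointwise case analysis on the signs of $v(x),v(y)$ shows that the integrand is non-positive everywhere, and strictly negative whenever exactly one of the two values is negative. Thus both sides must vanish; but if $v^{-}$ were not a.e.\ zero, the set $\{v<0\}\subseteq \Omega$ would have positive measure, and its pairing with the region where $v\geq 0$ (which contains all of $\Omega_e$) would force strict negativity of the double integral, a contradiction. Hence $v^{-}\equiv 0$ and $v\geq 0$ in $\R^n$.

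For the strict positivity when $g\not\equiv 0$, I would decompose $v=w_F+w_g$, where $w_F\in\widetilde{H}^s(\Omega)$ solves $(-\Delta)^s w_F=F$ with zero exterior data and $w_g$ is $s$-harmonic in $\Omega$ with $w_g=g$ in $\Omega_e$. Applying the previous step separately to $w_F$ and $w_g$ yields $w_F,w_g\geq 0$ in $\Omega$. For $w_g$ I would then invoke Bogdan's explicit Poisson representation $w_g(x)=\int_{\Omega_e}P_\Omega(x,y)\,g(y)\,dy$, with kernel $P_\Omega>0$ on $\Omega\times\Omega_e$ (available for Lipschitz domains); since $g\geq 0$ and $g\not\equiv 0$, this forces $w_g(x)>0$ for every $x\in\Omega$, and hence $v=w_F+w_g>0$ in $\Omega$.

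The delicate point in this plan is the pointwise sign analysis for the Gagliardo integrand, which requires going through all four combinations of signs of $(v(x),v(y))$; the mixed-sign cases are what provide both non-positivity and the strict inequality used in the contradiction. This step can be packaged more efficiently through a fractional Kato-type inequality $(-\Delta)^s v^{-}\leq -\chi_{\{v<0\}}(-\Delta)^s v$ in the distributional sense, which bypasses the direct case-splitting and immediately gives $\|(-\Delta)^{s/2}v^{-}\|_{L^2}^2\leq -\int_\Omega F\,v^{-}\,dx\leq 0$, forcing $v^{-}\equiv 0$.
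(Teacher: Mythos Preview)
The paper does not actually supply a proof of this proposition: Appendix~B only states the result and refers to \cite{ros2015nonlocal,bucur2016nonlocal,lai2019global,LL2020inverse} for the argument. So there is no in-paper proof to compare against; your task reduces to whether your argument stands on its own.

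Your proof is correct and is precisely the standard route taken in the cited references. The truncation/energy argument for $v\geq 0$ is the usual one: $v^{-}\in\widetilde H^{s}(\Omega)$ because $g\geq 0$, the Gagliardo bilinear form evaluated at $(v,v^{-})$ is pointwise non-positive (with strict negativity on mixed-sign pairs), while the source term $\int_\Omega F v^{-}\,dx$ is non-negative, forcing $v^{-}\equiv 0$. Your contradiction step is clean since $\Omega_e$ always has positive measure with $v\geq 0$ there, so any set $\{v<0\}$ of positive measure produces a genuinely negative contribution. The Kato-inequality repackaging you mention at the end is exactly how \cite{ros2015nonlocal} organizes the same computation.

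For strict positivity your decomposition $v=w_F+w_g$ and appeal to the Poisson representation $w_g(x)=\int_{\Omega_e}P_\Omega(x,y)g(y)\,dy$ with $P_\Omega>0$ is also the standard argument. One small caution: the pointwise Poisson formula and strict positivity of $P_\Omega$ on Lipschitz domains is genuinely a nontrivial input (Bogdan's boundary Harnack/representation theory for $\alpha$-harmonic functions); it is correct, but it is worth citing explicitly rather than asserting it as folklore, since this is where the Lipschitz hypothesis on $\partial\Omega$ is actually used. In the paper's applications (Theorem~\ref{Thm monotonicity}(b) and the proof of Theorem~\ref{Thm: If-and-only-if monotonicity}) one in fact has $\partial\Omega\in C^{1,1}$ and $g\in C^\infty_c(\Omega_e)$, where the Poisson kernel is entirely classical, so this point is not delicate for the purposes of the paper.
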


\vskip0.5cm

\noindent\textbf{Acknowledgments.} 
The author is grateful to Dr. Jesse Railo for many fruitful discussions.
The author is partially  supported by the Ministry of Science and Technology Taiwan, under the Columbus Program: MOST-109-2636-M-009-006, 2020-2025.

\bibliographystyle{alpha}
\bibliography{ref}

\end{document}